\documentclass[12pt,reqno, final]{amsart}

\usepackage{amssymb,amsmath,graphicx,
	amsfonts,euscript}
\usepackage{color}
\usepackage{cite}
\usepackage{showkeys}

\makeatletter
\@namedef{subjclassname@2020}{%
	\textup{2020} Mathematics Subject Classification}

\makeatother

\allowdisplaybreaks

\newcommand{\be}{\begin{eqnarray}}
\newcommand{\ee}{\end{eqnarray}}
\newcommand{\bes}{\begin{eqnarray*}}
\newcommand{\ees}{\end{eqnarray*}}

\newcommand{\md}{\mathrm{d}}
\newcommand{\mdx}{\mathrm{d}x}

\def\R{\mathbb{R}}
\newcommand\divg {{\text{div}}}

\newcommand\les{\leqslant}
\newcommand\ges{\geqslant}

\allowdisplaybreaks

\newtheorem{thm}{Theorem}[section]
\newtheorem{pro}{Proposition}[section]
\newtheorem{lem}{Lemma}[section]

\newtheorem{re}{Remark}[section]

\newcommand{\beq}{\begin{equation}}
\newcommand{\eeq}{\end{equation}}
\newcommand{\ben}{\begin{eqnarray}}
\newcommand{\een}{\end{eqnarray}}
\newcommand{\beno}{\begin{eqnarray*}}
\newcommand{\eeno}{\end{eqnarray*}}

\numberwithin{equation}{section}
\subjclass[2020]{35Q35, 76N10, 35B65}
\keywords{2D compressible micropolar equations, incompressible limit of large bulk viscosity, vanishing limit of angular viscosity, global large solutions}

\begin{document}
	
\title[large bulk viscosity and small angular viscosity ]{The combined limits of large bulk viscosity and small angular viscosity for the 2D compressible micropolar fluid equations}
	
\author[\small F. B. Lin, S. Q. Liu, and J. W. Zhang]{\small Fangbin Lin$^{1}$, Shengquan Liu$^{2}$ and Jianwen Zhang$^{3}$}

\address{1. Fangbin Lin,  School of Mathematical Sciences, Xiamen University, Xiamen  361005, P. R. China}
 \email{19020230157208@stu.xmu.edu.cn}

 \address{2. Shengquan Liu,  School of Mathematics and Statistics, Liaoning University, Shenyang 110036, P. R. China}
 \email{shquanliu@163.com}

 \address{3. Jianwen Zhang (Corresponding Author), School of Science, Jimei University, Xiamen 361021, P. R. China}
 \email{jwzhang@jmu.edu.cn}

\vskip .2in
\begin{abstract}	
This paper is concerned with the combined limits of large bulk viscosity and small angular viscosity for the two-dimensional compressible micropolar fluid equations. We first establish the global well-posedness of strong solutions to the Cauchy problem with large bulk viscosity, when the initial velocity is  well-prepared in the sense that $\sqrt\nu \|\divg u_0\|_{L^2}$ is uniformly bounded. The global a priori estimates obtained are uniform with respect to both the bulk viscosity 
$\nu$ and the angular viscosity $\varepsilon$. Based on these uniform estimates, we then justify the combined limits as $\nu\to\infty$ and $\varepsilon\to0$, and show that the solutions of the compressible micropolar system converge to those of the viscous and non-diffusive incompressible micropolar system. The convergence rates for the incompressible limit and the vanishing angular viscosity limit are also derived. In particular, the convergence rate for the incompressible limit is shown to be of order $\nu^{-1/4}$
in general, and can be improved to $\nu^{-1/2}$ under an additional decay condition on the initial density. Compared with previous works, the vanishing angular viscosity limit is established without the extra technical assumption $\lim_{\varepsilon\to0+}\zeta/\varepsilon^\alpha<\infty$ for some $1/2\les \alpha<\infty$, which extends the relevant results from the incompressible case to the compressible setting. As a by-product, the global regularity of large solutions for the 2D compressible micropolar equations with/without angular viscosity is obtained, provided the bulk viscosity is large enough.
\end{abstract}
	\maketitle

\section{Introduction}\label{sec1}
The mathematical theory of micropolar fluids  was firstly initiated by Eringen \cite{Eringen1964,Eringen1966} and  has been successfully applied to simulate the motions of a variety of complex liquids such as blood, suspensions, etc. The  two-dimensional micropolar equations can be formally derived from the three-dimensional framework by assuming that the vertical  velocity vanishes (i.e. $U=(u_1,u_2,0)$) and the axes of the rotation of particles are parallel to the vertical axis (i.e. $W=(0,0,w)$). The 2D motion of compressible micropolar fluids is governed by the following equations (cf. \cite{Lu1999}):
\begin{equation}
\begin{cases}\label{1.1}
 \partial_{t} \rho+\divg(\rho u)=0,\\[1mm]
  \partial_{t}(\rho u)+\divg(\rho u \otimes u)+\nabla P(\rho )\\
  \quad =(\mu+\zeta)\Delta u+
(\mu+\lambda-\zeta)\nabla\divg u-2\zeta\nabla^{\bot} w ,\\[1mm]
  \partial_{t}(\rho   w )+\divg (\rho  u  w  )+4\zeta w=\varepsilon\Delta w +2\zeta\nabla^{\bot}\cdot u,
\end{cases}
\end{equation}
where the unknown functions $\rho=\rho(x,t)\geq0$, $u=(u_1,u_2)(x,t)$ and $ w = w (x,t)$ are the fluid
density, the velocity and the micro-rotational velocity, respectively. The pressure $P=P(\rho)$ is determined through
the $\gamma$-law equation of state:
\begin{equation}\label{1.2}
P(\rho)=A\rho^\gamma\quad {\text{with}}\quad \gamma>1\quad{\text {and}}\quad A>0.
\end{equation}
The physical parameters $\mu$, $\lambda$, $\varepsilon$ and $\zeta$ are assumed to satisfy
\begin{equation}\label{1.3}
\mu>0,\quad  2\mu+\lambda> 0,\quad \varepsilon\ge 0\quad{\text{and}}\quad \zeta> 0,
\end{equation}
where $\mu$ is the shear  viscosity coefficient,  $\lambda$ is the bulk viscosity coefficient,  $\varepsilon$ is the angular viscosity coefficient,  and $\zeta$ is the micro-rotational viscosity coefficient, respectively. Here,  $\nabla^\bot\triangleq (-\partial_2,\partial_1)$ with $\partial_i \triangleq \partial_{x_i}$ for  $i=1,2$, $\nabla^\bot w\triangleq (-\partial_2w,\partial_1 w)$, and $\nabla^\bot\cdot u\triangleq \partial_1u_2-\partial_2 u_1$ is the vorticity of the fluid.

The micropolar fluid is defined as a type of non-Newtonian fluid that incorporates the micro-rotational effects and couple stresses. When the micro-rotational effects are neglected (i.e. $w=0$), the system (\ref{1.1}) reduces to the Navier-Stokes equations for compressible  viscous fluids, which have been extensively studied by many authors, see, for example, \cite{Fe2004,FNP2001,Li1998,MN1980} and among others. Because of its physical importance, the mathematical theory of micropolar fluids has also attracted a lot of attention from many mathematicians. In particular, the one-dimensional flow of the compressible viscous micropolr fluid is well understood (cf. \cite{Mu1998-1, Mu1998-2}). Based on the compensated compactness technique developed in \cite{Li1998,Fe2004}, Amirat-Hamdache \cite{Amirat} proved the global-in-time existence of weak solutions of the 3D compressible micropolar equations with finite-energy data. In \cite{Chen2015}, the authors obtained the global weak solutions with  small-energy data, which may be discontinuous and contain vacuum. The weak solutions shown in \cite{Chen2015} are called the ``intermediate weak" solutions, which were firstly introduced by Hoff \cite{Ho1995} for compressible Navier-Stokes equations and have more regularity than the ones in \cite{Li1998,FNP2001}. However, the uniqueness of such weak solutions is still unknown. Analogously to that in \cite{MN1980}, the global existence and asymptotic behavior of smooth solutions was respectively studied by Tong-Pan-Tan \cite{TPZ2021} and Song  \cite{Song2023} in Sobolev and Besov spaces,  provided the initial perturbation around the non-vacuum equilibrium is sufficiently small.

In this paper, we aim to consider an initial value problem of (\ref{1.1}) subject to the initial conditions:
\begin{equation}\label{1.4}
(\rho, u,  w )(x,0)=(\rho_{0}, u_{0}, w_0)(x),\quad x\in\mathbb{R}^2,
\end{equation}
and the far-field conditions:
\begin{equation}\label{1.5}
(\rho, u,  w )(x,t)\rightarrow (\widetilde{\rho}, 0,0)\quad{\text{as}}\quad  |x|\to\infty,
\end{equation}
where $\widetilde{\rho}>0$ is a given positive constant.

To explain our motivation precisely, let us assume for the moment that $(\rho,u,w)$ is a smooth solution of (\ref{1.1}). It follows from  the momentum equations (\ref{1.1})$_2$ that
$$
 \rho u_t+\rho u\cdot\nabla u+\nabla P(\rho )=(\mu+\zeta)\nabla^\bot\left(\nabla^\bot\cdot u\right)+
\nu \nabla\divg u-2\zeta\nabla^{\bot} w
$$
with $ \nu\triangleq 2\mu+\lambda$. 
So, if the bulk viscosity tends to infinity (i.e. $\nu=2\mu+\lambda\to\infty$ with $\mu>0$ fixed and $\lambda\to\infty$), then one formally gets that $\nabla \divg u\to0$ as $\nu\to\infty$. Thus, it is expected that $\divg u\to0$  as $\nu\to\infty$ in some sense, provided the initial data is well-prepared  (i.e. $\divg u_0\to0$ as $\nu\to0$). In other words, as $\nu\to\infty$, the system (\ref{1.1}) turns into the 2D  micorpolar equations for nonhomogeneous incompressible viscous  fluids,
\begin{equation}
\begin{cases}\label{1.6}
 \rho_t+u\cdot\nabla \rho=0,\\[1mm]
  \rho u_t+\rho u\cdot\nabla u+\nabla \Pi =(\mu+\zeta)\Delta u
-2\zeta\nabla^{\bot} w ,\\[1mm]
\rho   w_t+\rho  u \cdot\nabla w  +4\zeta w=\varepsilon\Delta w +2\zeta\nabla^{\bot}\cdot u,\\[1mm]
\divg u=0.
\end{cases}
\end{equation}
Note that  $\nabla^\bot(\nabla^\bot\cdot u)=\Delta u$ when $\divg u=0$.

It is also known that (cf. \cite{Eringen1964,Eringen1966})  for the dilute particle suspensions, the micro-rotational diffusion is extremely weak. So, the diffusion term $\varepsilon\Delta w$ on the right-hand side of (\ref{1.6})$_3$ is often dropped in such a situation, and the system (\ref{1.6}) can be further simplified to the following viscous and non-diffusive equations,
\begin{equation}
\begin{cases}\label{1.7}
 \rho_t+u\cdot\nabla \rho=0,\\[1mm]
  \rho u_t+\rho u\cdot\nabla u+\nabla \Pi =(\mu+\zeta)\Delta u
-2\zeta\nabla^{\bot} w ,\\[1mm]
\rho   w_t+\rho  u \cdot\nabla w  +4\zeta w= 2\zeta\nabla^{\bot}\cdot u,\\[1mm]
\divg u=0.
\end{cases}
\end{equation}
Both systems (\ref{1.6}) and (\ref{1.7}) describe the motion of the nonhomogeneous incompressible micropolar fluids. We mention here that the mathematical theory of nonhomogeneous incompressible fluids was  initiated by the Russian mathematicians, see, for example, \cite{AKM1990} (also cf. \cite{Lions1996}).

There have also been numerous studies on the mathematical theory of incompressible micropolar flows. For the 3D homogeneous incompressible micropolar fluid equations (i.e. $\rho\equiv {\text{Const.}}$), the global weak solutions with large data and the global strong solutions with small data were obtained by {\L}ukaszewicz  in \cite{Lu1988} and \cite{Lu1989}, respectively. For the nonhomogeneous incompressible micropolar fluid with positive density in 3D, the local strong solutions with large data and the global strong solutions with small data were studied in \cite{BRF2003}. The global Fujita-Kato's type solutions with bounded density and small velocity in Besov space were considered in \cite{QCZ2023}. However, analogously to the 3D Navier-Stokes equations, the problem of global regularity and uniqueness of weak solutions with large data remains completely open. Compared with the 3D equations, the mathematical theory of 2D system  is better understood.  In particular, Dong-Zhang \cite{DZ2010} proved the global regularity of large solutions to the Cauchy problem  of the 2D micropolar equations for homogeneous incompressible fluids  with zero angular viscosity  (i.e. system  (\ref{1.7}) with $\rho\equiv {\rm Const.}$, $\mu>0$ and $\varepsilon=0$). Recently, Lin-Xu-Zhang \cite{LXZ2026} justified the vanishing limit of angular viscosity (i.e. $\varepsilon\to0$) of the 2D nonhomogeneous incompressible equations (\ref{1.6}) on bounded domain with Navier's type slip boundary condition, and obtained the global strong solutions of system (\ref{1.7}) with large data.

The incompressible limit at large bulk viscosity was firstly investigated by Danchin-Mucha \cite{DM2017} for the compressible Navier-Stokes equations. In \cite{DM2017}, the authors also proved the global existence of regular solutions to the 2D compressible Navier-Stokes equations with arbitrary large initial velocity and almost constant density in critical Besov spaces, provided the large bulk viscosity is large enough. Later, it was extended to the $L^p$-framework by  Chen-Zhai \cite{CZ2019}.  Recently,  Danchin-Mucha \cite{Danchin2023} proved the global existence of weak solutions of the barotropic compressible Navier-Stokes equations with large and low-regularity initial data on the torus $\mathbb{T}^2$, when the initial density is merely bounded and nonnegative (possibly discontinuous and containing vacuum regions). More interestingly, the uniqueness of weak solutions (i.e. the so-called Hoff's type ``intermediate weak" solutions, see \cite{Ho1995}) to the 2D isothermal equations was shown. The convergence rates of the incompressible limit (as $\nu\to\infty$) for the global solutions with vacuum  was studied by Liu-Zhang \cite{LZ2026}, based on some new $t$-weighted and singular $t$-weighted analysis. In \cite{LXZ2026-1}, the authors considered the global regularity of large solutions and the incompressible limit at high bulk viscosity for the 2D Cauchy problem of compressible Navier-Stokes equations with the far-field vacuum (i.e. $\rho\to 0$ as $x\to\infty$).

Motivated by the results mentioned above, the main purpose of this paper is to justify the combined limits at large bulk viscosity (i.e. $\nu\to\infty$) and small angular viscosity (i.e. $\varepsilon\to0$) from the  {\it viscous diffusive compressible} equations (\ref{1.1}) to the  {\it viscous   non-diffusive incompressible}  equations (\ref{1.7}). More precisely, the incompressible limit will be characterised by the large value of the bulk
viscosity, while the non-diffusive limit will be justified by the small value of the angular viscosity. Indeed, to the authors' knowledge, the global regularity of large solutions of the 2D viscous and non-diffusive equations for compressible micropolar fluids (i.e. (\ref{1.1}) with  $\varepsilon=0$) is still unknown. Moreover, it is worth pointing out that even for the 2D micropolar equations for the homogeneous incompressible fluids (i.e. $\rho={\rm Const.}$ in (\ref{1.6})), the non-diffusive limit as $\varepsilon \to0$ was usually studied under the following technical condition (see, for example, \cite{Chen2015}):
 \begin{equation}\label{1.8}
 \lim_{\varepsilon\to0+}\frac{\zeta}{\varepsilon^\alpha}<\infty\;\; {\text{with}}\;\; \frac{1}{2}\les \alpha<\infty.
 \end{equation}
This  particularly implies that  the rotational term $2\zeta\nabla^\bot w$ on the right-hand side of (\ref{1.6})$_2$ acts as a small force on the momentum equations, and hence, the coupling between the $u$ and $w$ equations  is very weak as   $\varepsilon\to0$.  In the present paper, even  without the  additional assumption (\ref{1.8}), we can still justify the vanishing limit of angular viscosity (i.e. $\varepsilon\to0$ ) for the 2D compressible micropolar equations (\ref{1.1}).

We shall work with the  standard Lebesgue and Sobolev spaces
$
L^p \triangleq L^p(\R^2)$, $W^{k,p}\triangleq W^{k,p}(\R^2)$ and $H^k\triangleq H^k(\R^2)=W^{k,2}(\R^2)
$
with the norms being denoted respectively by $\|\cdot\|_{L^p}$, $\|\cdot\|_{W^{k,p}}$ and $\|\cdot\|_{H^k}$  for $k\in \mathbb{Z}$ and $1\les p\les \infty$. 

\vskip 2mm

Next, let us impose the initial conditions. Let $\underline\rho$ and $\overline\rho$ be two positive constants, satisfying $0<\underline\rho<\widetilde\rho<\overline\rho<\infty$. Assume that
\begin{equation}\label{1.9}
\underline\rho \les \rho_0\les \overline\rho,\quad (\rho_0-\widetilde\rho,w_0)\in H^1\cap W^{1,4},\quad  u_0\in H^1,
\end{equation}
and that there exists a positive constant $K>0$ (not necessarily small) such that
\begin{equation}\label{1.10}
\sqrt\nu \|\divg u_0\|_{L^2}\les K.
\end{equation}
The condition (\ref{1.10}) indicates that the initial velocity is ``well-prepared" when the bulk viscosity tends to infinity, and will be used to eliminate the effect of initial layer and to derive  the convergence rate of the incompressible limit.

The main result of this paper consists of two parts. The first one (cf. Theorem \ref{thm1.1}) is about the global well-posedness of strong solutions with large bulk viscosity of the problem (\ref{1.1})--(\ref{1.5}), and the second one (cf. Theorems \ref{thm1.2} and \ref{thm1.3}) is concerned with the convergence rate of the combined limits from  (\ref{1.1}) to (\ref{1.7}) as $\nu\to\infty$ and $\varepsilon\to0$. To clarify the notations, the solutions of the viscous, non-diffusive and incompressible system (\ref{1.7}) are denoted by $(\eta,v,\chi)$ (see Lemma \ref{lem2.1}).

\begin{thm}\label{thm1.1}  Assume that \eqref{1.9} and \eqref{1.10} hold. Then for any $0<T<\infty$, there exists  a positive number $\nu_0>1$, depending  on $A$, $\mu$, $\zeta$, $\gamma$, $\widetilde\rho$, $\underline\rho$, $\overline\rho$, $K$ and $\|(u_0,w_0)\|_{H^1}$, but not on $\nu$, $\varepsilon$ and $T$, such that the problem \eqref{1.1}--\eqref{1.5} has a global unique strong solution $(\rho,u,w)$ on $\R^2\times[0,T]$, satisfying
\begin{equation}\label{1.11}
\frac{1}{4}\underline{\rho}\les\rho (x,t)\les 2 \overline{\rho}, \; \;\forall \; x\in\R^2,\; t\ges0
\end{equation}
and
\begin{equation}\label{1.12}
\begin{cases}
(\rho-\widetilde\rho, w)\in L^\infty(0,T; H^1\cap W^{1,4}),\;\; u\in L^\infty(0,T; H^1)\cap L^2(0,T; H^2),\\[2mm]
(u_t, w_t)\in L^\infty(0,T; L^2),\;\; \nabla u\in L^{1+r}(0,T; L^\infty\cap  W^{1,4}),\;\; \forall\; r\in(0, 1/4),\\[2mm]
\sqrt t u\in L^\infty(0,T; H^2),\;\; \sqrt t u_t\in L^\infty(0,T; L^2)\cap L^2(0,T; H^1),
\end{cases}
\end{equation}
provided $\nu\ges \nu_0$. Moreover, all the norms in \eqref{1.12} are uniform in $\nu$ and $\varepsilon$.
\end{thm}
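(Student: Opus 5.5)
The plan is the standard continuation argument. Local existence and uniqueness of strong solutions with positive density is classical for data as in \eqref{1.9}, so Theorem \ref{thm1.1} reduces to a priori estimates that are uniform in $\nu$, $\varepsilon$ and $T$. I will assume on $[0,T]$ the bootstrap hypothesis $\frac18\underline\rho\les\rho\les 4\overline\rho$ together with a bound $\int_0^T\|\nabla u\|_{L^\infty}dt\les 2M$. I will then show that for $\nu\ges\nu_0$ these improve to \eqref{1.11} and to the bound with $M$. The key variable is the effective viscous flux $F=\nu\divg u-(P-\widetilde P)$ and the vorticity $\omega=\nabla^\bot\cdot u$. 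The momentum equation reads
\begin{equation*}
\rho\dot u=\nabla F+(\mu+\zeta)\nabla^\bot\omega-2\zeta\nabla^\bot w .
\end{equation*}
Elliptic estimates then give $\|\nabla F\|_{L^p}+\|\nabla\omega\|_{L^p}\les C(\|\rho\dot u\|_{L^p}+\|\nabla w\|_{L^p})$ with $C$ independent of $\nu$. Crucially, $\divg u=(F+P-\widetilde P)/\nu$ is $O(\nu^{-1})$ in any norm in which $F$ and $P$ are controlled.

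The estimates go in four steps.

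\emph{Step 1 (energy).} The basic energy identity controls $\sqrt\rho u$, $\sqrt\rho w$, $G(\rho)$ in $L^\infty L^2$. It also controls $\nabla u$, $\sqrt\nu\divg u$, $\sqrt\varepsilon\nabla w$ and $\sqrt\zeta(\omega-2w)$ in $L^2L^2$. The micro-rotation terms combine into $\zeta|\nabla^\bot\cdot u-2w|^2\ges0$, which is why no relation between $\zeta$ and $\varepsilon$ as in \eqref{1.8} is needed.

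\emph{Step 2 ($H^1$ of $u$, well-prepared).} Testing the momentum equation with $u_t$ gives
\begin{equation*}
\sup_t\big((\mu+\zeta)\|\omega\|_{L^2}^2+\nu\|\divg u\|_{L^2}^2\big)+\int_0^T\|\sqrt\rho\dot u\|_{L^2}^2dt\les C.
\end{equation*}
The constant depends on $K$ through \eqref{1.10}. The pressure term is $\int (P-\widetilde P)\divg u_t$, which I rewrite via $\frac{d}{dt}$ and the transport equation. The convective terms are handled with $F,\omega$, Gagliardo–Nirenberg in 2D, and $\|\divg u\|_{L^4}\les C\nu^{-1}(\|F\|_{L^4}+\|P-\widetilde P\|_{L^4})$. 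Gronwall closes using $\int\|\nabla u\|_{L^2}^2<\infty$. For the $w$-equation, testing by $w_t$ would cost $\varepsilon$, so instead I estimate $\nabla w$ in $L^4$ and $L^2$ by differentiating the $w$ equation. This yields an ODE-type bound: the damping $4\zeta w$ and the source $\nabla\omega$ give
\begin{equation*}
\frac{d}{dt}\|\nabla w\|_{L^p}\les C\|\nabla u\|_{L^\infty}\|\nabla w\|_{L^p}+C\|\nabla\omega\|_{L^p},
\end{equation*}
with the $\varepsilon\Delta$ term having a good sign (for $p=4$ after integration by parts).

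\emph{Step 3 (higher, time-weighted).} Applying $\partial_t+\divg(u\,\cdot)$ to the momentum equation and testing with $t\dot u$ (Hoff's method) gives $\sqrt t\dot u\in L^\infty L^2\cap L^2H^1$ uniformly in $\nu$. Here the $\nu$-terms appear only as $\nu\|\divg \dot u\|^2$-type dissipation or are absorbed via $F$. Then
\begin{equation*}
\|\nabla u\|_{L^\infty}\les C\nu^{-1}\|P-\widetilde P\|_{L^\infty}+C\|\nabla F,\nabla\omega\|_{L^4}^{\theta}(\cdots)+\log\text{-terms}.
\end{equation*}
Using $\|\nabla\dot u\|_{L^2}\les t^{-1/2}(\ldots)$ one gets $\nabla u\in L^{1+r}(0,T;L^\infty\cap W^{1,4})$ for $r<1/4$, via the interpolation $\|\rho\dot u\|_{L^4}\les\|\dot u\|_{L^2}^{1/2}\|\nabla\dot u\|_{L^2}^{1/2}$. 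Combined with Step 2's Gronwall on $\nabla w$ and $\nabla\rho$ in $L^4$, this closes the $W^{1,4}$ bounds.

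\emph{Step 4 (density bound; the main obstacle).} Along particle paths,
\begin{equation*}
\frac{D}{Dt}\log\rho=-\divg u=-\frac1\nu\big(F+P-\widetilde P\big).
\end{equation*}
Hence $|\log\rho(t)-\log\rho_0|\les\nu^{-1}\int_0^T(\|F\|_{L^\infty}+C)dt$. The difficulty is making this bound independent of $T$: $\int_0^T\|P-\widetilde P\|_{L^\infty}dt$ grows linearly. I would use the sign structure instead. The term $-\nu^{-1}(P(\rho)-\widetilde P)$ is a damping toward $\widetilde\rho$, so a comparison (Zlotnik-type) argument gives $\rho$ bounded between $\frac14\underline\rho$ and $2\overline\rho$, provided $\nu^{-1}\int_{t_1}^{t_2}\|F\|_{L^\infty}\les C\nu^{-1}(1+(t_2-t_1))$ with small constant. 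Interpolation bounds $\|F\|_{L^\infty}$ by $\|F\|_{L^2}^{1/2}\|\nabla F\|_{L^4}^{1/2}$ (up to logs). Here $\|F\|_{L^2}\les C(\sqrt\nu\,\|\sqrt\nu\divg u\|+1)$, which creates a $\sqrt\nu$ loss, and I expect that balancing it against the $L^{1+r}$-in-time bound of $\nabla F$ is the delicate point. The aim is total contribution $O(\nu^{-1/4})$ plus linear-in-time terms with coefficient absorbed by the damping. Choosing $\nu_0$ large, depending only on the listed quantities, then improves both bootstrap hypotheses. Uniqueness follows by a standard Lagrangian or $L^2$ energy stability argument using $\nabla u\in L^1L^\infty$.
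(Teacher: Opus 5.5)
\textbf{The gap: $T$-independence of $\nu_0$, and the coupling term behind it.} Your overall architecture (effective flux, the damping in the density equation, BKM for the $W^{1,4}$ bounds) is the right one. What fails is the requirement that $\nu_0$ not depend on $T$, and the cause is the coupling term $-2\zeta\nabla^\bot w$.

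\emph{The coupling term is never handled.} In Step 2 the term $-2\zeta\int\nabla^\bot w\cdot u_t\,\mdx$ is not addressed. In your scheme it can only be bounded by $\|\nabla w\|_{L^2}\|u_t\|_{L^2}$. Since $\nabla w$ is not dissipated uniformly in $\varepsilon$, that bound must come from your transport estimate for $\nabla w$. That estimate needs $\int_0^T\|\nabla u\|_{L^\infty}\md t$ (your $M$), produces $\nabla\rho$ terms, and grows with $T$.

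\emph{Consequence for Step 4.} Hence $\int_0^T\|\dot u\|_{L^2}^2\md t$, the Hoff estimate of Step 3, and the $L^{1+r}_tL^4_x$ bound on $\nabla F$ that you feed into Step 4 all depend on $M$ and $T$. The density argument then gives at best $\nu_0=\nu_0(M,T)$. For a fixed $\nu$ this only yields existence on a finite interval, not the global statement.

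\emph{The $M$-bootstrap does not close.} Taking $\nu$ large does not help here. In the BKM bound the vorticity part $\|\nabla^\bot\cdot u\|_{L^\infty}\ln(e+\|\nabla^2u\|_{L^4})$ is not small in $\nu$. Moreover $\|\nabla^2u\|_{L^4}$ contains $\|(\nabla\rho,\nabla w)\|_{L^4}$, which you control only through Gronwall factors of size $e^{CM}$. Their logarithm is linear in $M$ and is multiplied by $\int_0^T\|\nabla^\bot\cdot u\|_{L^\infty}\md t$, which is not small, so the recovered bound cannot be pushed below $M$.

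\emph{Wrong interpolation exponents.} In 2D the correct inequality is $\|F\|_{L^\infty}\les C\|F\|_{L^2}^{1/3}\|\nabla F\|_{L^4}^{2/3}$; the exponents $1/2,1/2$ violate scaling. With the correct exponents the $\sqrt\nu$ loss you worry about becomes only $\nu^{1/6}$, so that is not the delicate point.

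\textbf{The missing idea: global-in-time lower-order bounds without $\nabla w$.} The lower-order and $\sigma$-weighted estimates can be made global in time and uniform in $\nu,\varepsilon$, using only the density bounds.

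\emph{Test with $\dot w$ after all.} Multiply $\nabla F+(\mu+\zeta)\nabla^\bot(\nabla^\bot\cdot u)=\rho\dot u+2\zeta\nabla^\bot w$ by $2\dot u$. Then use the $w$-equation to rewrite
$-4\zeta\int\nabla^\bot w\cdot\dot u\,\mdx=4\zeta\frac{\md}{\md t}\int w\,\nabla^\bot\cdot u\,\mdx-2\int\dot w\,(\rho\dot w+4\zeta w-\varepsilon\Delta w)\,\mdx$.
This is precisely testing the $w$-equation with $\dot w$, which you discarded. It gives a functional containing $\nu^{-1}\|F\|_{L^2}^2+\mu\|\nabla^\bot\cdot u\|_{L^2}^2+\zeta\|\nabla^\bot\cdot u-2w\|_{L^2}^2+\varepsilon\|\nabla w\|_{L^2}^2$, with dissipation $\|\sqrt\rho\dot u\|_{L^2}^2+\|\sqrt\rho\dot w\|_{L^2}^2$.

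\emph{The one bad term.} The remaining term $2\varepsilon\int u\cdot\nabla w\,\Delta w\,\mdx$ is controlled by three ingredients:
the bound $\varepsilon\|\nabla^2w\|_{L^2}\les C(\|\sqrt\rho\dot w\|_{L^2}+\|\nabla^\bot\cdot u\|_{L^2})$;
the modified vorticity $H=\nabla^\bot\cdot u-\frac{2\zeta}{\mu+\zeta}w$, which satisfies $\|\nabla H\|_{L^p}\les C\|\dot u\|_{L^p}$;
and an $L^4$ estimate for $w$.
Note that your elliptic estimate for $\nabla\omega$ keeps $\nabla w$ on the right-hand side, which is exactly what must be avoided.

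\emph{How the argument then closes.} All Gronwall coefficients lie in $L^1(0,\infty)$, so Lemmas \ref{lem3.1} and \ref{lem3.2} are $T$-independent. Then $\nu^{-1}\|F\|_{L^\infty}\les C\nu^{-5/6}\|\dot u\|_{L^4}^{2/3}$, and the damping kernel $e^{-\alpha(t-s)/\nu}$ gives a contribution $O(\nu^{-1/6})$ uniformly in $T$. Only afterwards come the $T$-dependent bounds on $\nabla\rho$ and $\nabla w$, via BKM and Gronwall for $\ln(e+\|\nabla\rho\|_{L^4}^4+\|\nabla w\|_{L^4})$. These never feed back into the density bound, so no bootstrap on $\int\|\nabla u\|_{L^\infty}$ is needed.
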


With the global and uniform (in $\nu$ and $\varepsilon$) estimates in (\ref{1.11}) and (\ref{1.12}), we can pass to the limits as $\nu\to\infty$ and $\varepsilon\to0$ and show that $(\rho,u,w)\to(\eta,v,\chi)$ in some sense by using the standard compactness arguments (see, for example, \cite{DM2019, Danchin2023} and \cite{LZ2026}). Moreover, we can prove the following convergence rate.

\begin{thm}\label{thm1.2} Let the conditions of Theorem \ref{thm1.1} be in force. Then, as $\nu\to\infty$ and $\varepsilon\to0$, the solution $(\rho,u,w)$ of the problem \eqref{1.1}--\eqref{1.5} converges strongly to the one $(\eta,v,\chi)$ of the system \eqref{2.1} with $\eta_0=\rho_0$, $\chi_0=w_0$ and $v_0=\mathcal{P}u_0$ in some sense on $\R^2\times[0,T]$, satisfying
\begin{equation}\label{1.13}
\|(\rho-\eta,\mathcal{P}u-v,w-\chi)(t)\|_{L^2}^2+\int_0^t\|\nabla(\mathcal{P}u-v)\|_{L^2}^2\md s
 \les C\left(\nu^{-\frac{1}{4}}+\varepsilon\right)
\end{equation}
and
\begin{equation} \label{1.14}
\|\nabla (\mathcal{Q}u)(t)\|_{L^2}^2
+\int_0^t\|\nabla(\mathcal{Q}u)\|_{H^1}^2\md s\les  C\nu^{-1},\;\; \forall\; 0\les t\les T,
\end{equation}
where $\mathcal{P}u$ and $\mathcal{Q}u$ are the Helmholtz projectors on the divergence-free field and the potential vector field, defined respectively by
$$ \mathcal{P} u\triangleq(\mathbb{Id}+\nabla(-\Delta)^{-1}{\rm{div}} )u\quad{\text{and}}\quad \mathcal{Q}u\triangleq-\nabla(-\Delta)^{-1}{\rm{div}} u.$$
Here and hereafter, the same letter $C$ denotes the positive constant, which is independent of $\nu$ and $\varepsilon$, but may change from line to line.
\end{thm}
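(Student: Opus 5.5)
The plan is to derive \eqref{1.14} first from the uniform estimates of Theorem \ref{thm1.1}, and then obtain \eqref{1.13} from an $L^2$ relative-energy estimate for the differences, using \eqref{1.14} to control every term in which $\mathcal{Q}u$ enters.

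\textbf{Step 1: the compressible part \eqref{1.14}.} I would rewrite the momentum equation with the effective viscous flux $F\triangleq \nu\divg u-(P(\rho)-P(\widetilde\rho))$. This gives
\[
\nabla F=\rho\dot u-(\mu+\zeta)\nabla^\bot(\nabla^\bot\cdot u)+2\zeta\nabla^\bot w,
\]
so that $\|\nabla F\|_{L^2}\les C(\|\rho\dot u\|_{L^2}+\|\nabla w\|_{L^2})$, where $\dot u=u_t+u\cdot\nabla u$. Since $\|\nabla\mathcal{Q}u\|_{L^2}=\|\divg u\|_{L^2}$, the time-integrated part follows from
\[
\nu\|\nabla\divg u\|_{L^2}\les \|\nabla F\|_{L^2}+C\|\nabla\rho\|_{L^2},
\]
which is in $L^2(0,T)$ uniformly by \eqref{1.12}. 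Hence $\int_0^t\|\nabla\mathcal{Q}u\|_{H^1}^2\md s\les C\nu^{-2}+C\nu^{-1}\int_0^t\|\divg u\|_{L^2}^2\md s$.

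For the pointwise-in-time bound, I would test the momentum equation against $\nabla\divg u$; equivalently, I would use the energy estimate for $\sqrt\nu\divg u$ that is behind Theorem \ref{thm1.1}. Together with \eqref{1.10}, this yields $\nu\|\divg u\|_{L^2}^2+\nu\int_0^t\|\nabla\divg u\|_{L^2}^2\md s\les C$. I expect this to be already part of the uniform a priori estimates of Theorem \ref{thm1.1}, and I would assume it from there.

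\textbf{Step 2: the difference estimates.} Set $\delta\rho=\rho-\eta$, $\delta v=\mathcal{P}u-v$ and $\delta w=w-\chi$. Write the momentum equation as $\rho\dot u+\nabla(\cdots)=(\mu+\zeta)\Delta\mathcal{P}u-2\zeta\nabla^\bot w$, and subtract \eqref{1.7}. Then test the $u$-difference equation with $\delta v$, so that all gradient terms drop out because $\delta v$ is divergence-free. Test the $w$-difference equation with $\delta w$, and the transport difference with $\delta\rho$. This produces:
\begin{itemize}
\item $\rho(\dot u-\dot v)$ terms, handled through the standard relative-energy identity, with error terms involving $\delta\rho\,\dot v$ and $\rho\,\mathcal{Q}u\cdot\nabla u$;
\item micropolar coupling terms, where $2\zeta\int(\nabla^\bot\delta w\cdot\delta v-\nabla^\bot\cdot\delta v\,\delta w)$ cancels exactly after integration by parts, which is why \eqref{1.8} is unnecessary;
\item the term $\varepsilon\int\Delta w\,\delta w=-\varepsilon\int\nabla w\cdot\nabla\delta w$, bounded by $\varepsilon(\|\nabla w\|_{L^2}^2+\|\nabla\chi\|_{L^2}^2)\les C\varepsilon$ using the uniform $W^{1,4}$ bounds on $w$ and on $\chi$ (from Lemma \ref{lem2.1}).
\end{itemize}
For the density, the equation $\partial_t\delta\rho+v\cdot\nabla\delta\rho=-\delta v\cdot\nabla\rho-\mathcal{Q}u\cdot\nabla\rho-\rho\divg u$ is tested with $\delta\rho$. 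It gives terms of the form $\|\delta v\|_{L^4}\|\nabla\rho\|_{L^4}\|\delta\rho\|_{L^2}$, which are absorbed by Gagliardo--Nirenberg and the dissipation. It also leaves the forcing $\int(|\mathcal{Q}u||\nabla\rho|+|\divg u|)|\delta\rho|$.

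\textbf{Step 3: Gronwall and the main obstacle.} Gronwall's inequality with $\|\nabla v\|_{L^\infty}\in L^1$ and $\|\nabla\rho\|_{L^4},\|\nabla\chi\|_{L^4}\in L^\infty$ gives \eqref{1.13}, provided the $\mathcal{Q}u$-forcing is $O(\nu^{-1/4})$. The main difficulty lies here, because the density equation contains $\divg u$ linearly, and \eqref{1.14} only gives $\|\divg u\|_{L^2_tL^2_x}\les C\nu^{-1/2}$. Using $\int|\divg u||\delta\rho|\md x\les\|\divg u\|_{L^2}\|\delta\rho\|_{L^2}$ with $\|\delta\rho\|_{L^2}$ a priori bounded only by the unknown itself would yield $\nu^{-1/2}$ for $\|\delta\rho\|_{L^2}$, i.e.\ $\nu^{-1}$ squared. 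However, $\delta\rho\notin L^2$ a priori is a concern unless $\rho_0-\widetilde\rho\in L^2$, which holds by \eqref{1.9}. The genuine loss comes from $\mathcal{Q}u\cdot\nabla\rho$ and from the term $\rho\,\mathcal{Q}u_t$ in $\dot u$ tested against $\delta v$. $\mathcal{Q}u$ itself is only controlled in $L^4$ by interpolation,
\[
\|\mathcal{Q}u\|_{L^4}\les\|\mathcal{Q}u\|_{L^2}^{1/2}\|\nabla\mathcal{Q}u\|_{L^2}^{1/2}\les C\nu^{-1/4},
\]
since $\|\mathcal{Q}u\|_{L^2}$ is merely bounded. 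This interpolation is exactly what produces the rate $\nu^{-1/4}$. Handling $\int\rho\,\partial_t\mathcal{Q}u\cdot\delta v$ requires integrating by parts in time, or using $\rho=\widetilde\rho+(\rho-\widetilde\rho)$ and orthogonality of $\mathcal{Q}$ and $\mathcal{P}$ for the constant part. I expect this term to be the delicate point.
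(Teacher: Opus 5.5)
\textbf{The gap: the density-difference term in the momentum equation.} Several parts of your plan agree with the paper. Step 1 is the paper's \eqref{5.3}--\eqref{5.4}. You correctly identify that $\|\mathcal{Q}u\|_{L^4}\le C\nu^{-1/4}$ sets the rate. For $\int\rho\,\partial_t\mathcal{Q}u\cdot\delta v$ the paper uses orthogonality \emph{together with} time integration by parts; both are needed, not either one.

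The gap is in Step 3. Plain Gronwall with only an $L^2$ estimate of $\delta\rho$ does not close. The obstruction is not the $\mathcal{Q}u$-forcing but the term $\int\delta\rho\,\dot v\cdot\delta v$, which you list but never estimate. It is the paper's $A_2$, with $\partial_t\mathcal{P}u$ in place of $\dot v$.

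With $u_0,v_0$ only in $H^1$, you have $\dot v\in L^2(0,T;L^2)$ and $\sqrt t\,\nabla\dot v\in L^2(0,T;L^2)$, but no unweighted bound on $\nabla\dot v$. Since $\delta\rho$ is controlled only in $L^2$, you must place $\dot v$ in $L^q$ with $q>2$:
$|\int\delta\rho\,\dot v\cdot\delta v|\le\|\delta\rho\|_{L^2}\|\dot v\|_{L^q}\|\delta v\|_{L^{2q/(q-2)}}$.
After Gagliardo--Nirenberg and Young, the Gronwall coefficient is $\|\dot v\|_{L^q}^{q/(q-1)}$. Interpolating the available bounds makes its time integral exactly borderline (logarithmically divergent at $t=0$) for every $q>2$. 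The estimate \eqref{2.9} only reaches the exponent $(q+1)/q<q/(q-1)$. Already for the heat flow with general $H^1$ data this coefficient fails to be integrable. So your Gronwall argument does not close, independently of how the $\mathcal{Q}u$ terms are treated.

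\textbf{The missing idea: $t$-growth and a singular time weight.} Since $\delta\rho(0)=0$ and its forcing is bounded in $L^2$, you get $\|\delta\rho(t)\|_{L^2}\le Ct$. This factor of $t$ is what compensates the $t^{-1/2}$ loss in $\sqrt t\,\|u_t\|_{H^1}$. The paper proceeds as follows.
\begin{itemize}
\item It splits $\rho-\eta=\varphi+\phi$ through an auxiliary density $\rho^*$ transported by $\mathcal{P}u$.
\item It proves $\|\varphi(t)\|_{L^2}^2\le C\nu^{-1/2}t$ and $\|\phi(t)\|_{L^2}\le Ct$.
\item It adds $t^{-1}\|\phi\|_{L^2}^2$, with dissipation $\frac{1}{2t^2}\|\phi\|_{L^2}^2$ (see \eqref{5.14}), to the Gronwall functional.
\end{itemize}
The bad term is then bounded by $(t\|u_t\|_{H^1}^2)(t^{-1}\|\phi\|_{L^2}^2+\nu^{-1/2})$, and $t\|u_t\|_{H^1}^2\in L^1(0,T)$.

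Alternatively, you could keep your single $\delta\rho$ and propagate an $L^3$ bound on it, which is possible since $\nabla\rho\in L^\infty(0,T;L^4)$. You can then pair it with $\|\dot v\|_{L^2}\in L^2(0,T)$ via $\|\delta\rho\|_{L^3}\|\dot v\|_{L^2}\|\delta v\|_{L^6}$.

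\textbf{A minor error on the coupling.} The micropolar coupling does not cancel. Testing with $\delta v$ and testing with $\delta w$ both produce $+2\zeta\int\delta w\,\nabla^\bot\cdot\delta v$; the coupling is symmetric, as in \eqref{3.21}. The sum is absorbed by $(\mu+\zeta)\|\nabla\delta v\|_{L^2}^2+4\zeta\|\delta w\|_{L^2}^2$, so this is harmless. However, what makes \eqref{1.8} unnecessary is the uniform-in-$\varepsilon$ bounds, not a cancellation.
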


As that in \cite{Danchin2023,LZ2026}, it is expected that the convergence rate of the incompressible limit as $\nu\to\infty$ is of the value $\nu^{-\frac{1}{2}}$. However, the present situation for the Cauchy problem on the whole domain is slightly different from the periodic problem on the torus. The main difference lies in the fact that the Poincar${\rm{\acute{e}}}$ type inequality $\|\mathcal{Q}u\|_{L^2}\les C\|\nabla(\mathcal{Q}u)\|_{L^2}$ holds for the periodic case (see \cite{Danchin2023,LZ2026}), but fails for the Cauchy problem. Indeed, it holds for the periodic problem that
$$
\|\mathcal{Q}u\|_{L^2}\les C\|\nabla(\mathcal{Q} u)\|_{L^2}\leq C\|\divg u\|_{L^2}\les C\nu^{-\frac{1}{2}},
$$
by which the convergence rate of $\|\mathcal{P}u-v\|_{L^2}$ can be shown to be of the order $\nu^{-\frac{1}{2}}$ (faster than $\nu^{-\frac{1}{4}}$ in (\ref{1.13})). In the next theorem, we shall prove that if the incompressible density decays to the  reference density $\widetilde\rho$  sufficiently fast at infinity, then the convergence rate of the incompressible limit is still of the value  $\nu^{-\frac{1}{2}}$.

\begin{thm}\label{thm1.3} In addition to the conditions of Lemma \ref{lem2.1}, assume that
\begin{equation}\label{1.15}
\bar x^\theta(\eta_0-\widetilde\rho)\in L^4\;\; {\text{with}}\;\; \frac{1}{2}<\theta<\infty \;\; {\text{and}}\;\; \bar x\triangleq (e+|x|^2)^{\frac{1}{2}}\log^2 (e+|x|^2).
\end{equation}
Then for any $0<T<\infty$, the  incompressible density $\eta=\eta(x,t)$ of the problem \eqref{2.1}--\eqref{2.2},  besides \eqref{2.4},  satisfies
 \begin{equation}\label{1.16}
\bar x^\theta(\eta-\widetilde\rho)\in L^\infty(0, T; L^4).
\end{equation}
Moreover, the following refined convergence rates hold for any $0\les t\les T$,
\begin{equation}\label{1.17}
\|(\rho-\eta,\mathcal{P}u-v,w-\chi)(t)\|_{L^2}^2+\int_0^t\|\nabla(\mathcal{P}u-v)\|_{L^2}^2\md s
 \les C\left(\nu^{-\frac{1}{2}}+\varepsilon\right).
\end{equation}
and
\begin{equation}\label{1.18}
\|\bar x^{-1}(\mathcal{Q}u)(t)\|_{L^2}^2+\|\bar x^{-\theta} (\mathcal{Q}u)(t)\|_{L^4}^2\les C\|\nabla(\mathcal{Q} u)\|_{L^2}\les C\nu^{-1}.
\end{equation}
\end{thm}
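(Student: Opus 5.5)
We split the proof into three parts: propagation of the weight for $\eta$, weighted bounds on $\mathcal{Q}u$, and a rerun of the energy estimate behind \eqref{1.13} in which the weights remove the $\nu^{-1/4}$ loss.

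\textbf{Step 1: the weighted bound \eqref{1.16}.} Since $\divg v=0$, the transport equation $\eta_t+v\cdot\nabla\eta=0$ gives, for $f=\eta-\widetilde\rho$,
$$\frac{d}{dt}\|\bar x^\theta f\|_{L^4}^4=\int |f|^4 v\cdot\nabla(\bar x^{4\theta})\,\md x .$$
Because $|\nabla \bar x|\les C\log^2(e+|x|^2)\les C\bar x^{1/2}$, the right-hand side is bounded by $C\|v\|_{L^\infty}\|\bar x^\theta f\|_{L^4}^4$. We use $v\in L^1(0,T;L^\infty)$, which follows from $v\in L^2(0,T;H^2)$ in Lemma \ref{lem2.1} and \eqref{2.4}. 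Gronwall's inequality then yields \eqref{1.16}.

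\textbf{Step 2: the weighted bounds \eqref{1.18}.} Write $\mathcal{Q}u=\nabla\phi$ with $\phi=-(-\Delta)^{-1}\divg u$. We use the two-dimensional weighted Hardy-type inequality $\|\bar x^{-1}g\|_{L^2}\les C(\|\nabla g\|_{L^2}+\|g\|_{L^2(B_1)})$. This is exactly the reason for the $\log^2$ factor in $\bar x$: the plain Hardy inequality fails in $\R^2$, and the extra logarithm makes the weight $\bar x^{-2}$ integrable against $r\,\md r$. The local term $\|\mathcal{Q}u\|_{L^2(B_1)}$ would be controlled by interpolation using the uniform bound on $\|u\|_{L^2}$. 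The $L^4$ version with weight $\bar x^{-\theta}$, $\theta>1/2$, follows similarly from Gagliardo--Nirenberg applied to $\bar x^{-\theta}\mathcal{Q}u$. In this form the inequality is only linear in $\|\nabla\mathcal{Q}u\|_{L^2}$. If the local term causes trouble, I would prove it instead by an annular decomposition, or by writing $\mathcal{Q}u=\nabla(-\Delta)^{-1}\divg u$ as a Riesz-type operator and using a weighted Hardy--Littlewood--Sobolev estimate. Once the inequality is in hand, \eqref{1.14} gives $\|\nabla\mathcal{Q}u\|_{L^2}\les C\nu^{-1/2}$, hence $\les C\nu^{-1}$ for the squared quantities.

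\textbf{Step 3: the refined rate \eqref{1.17}.} We redo the relative energy estimate behind \eqref{1.13} for $\delta u=\mathcal{P}u-v$, $\delta\rho=\rho-\eta$ and $\delta w=w-\chi$. The only terms that produced the loss $\nu^{-1/4}$ in Theorem \ref{thm1.2} were those in which $\mathcal{Q}u$ appears undifferentiated, estimated only through $\|\mathcal{Q}u\|_{L^2}$. These are:
\begin{itemize}
\item $\int(\rho-\widetilde\rho)\,\mathcal{Q}u\cdot(\ldots)$ in the momentum error;
\item the transport term $\divg(\delta\rho\, u)$, which contains $\mathcal{Q}u\cdot\nabla\eta$;
\item the convection terms $\rho\,\mathcal{Q}u\cdot\nabla v$.
\end{itemize}
In each of them we insert the weights, for example
$$\int \mathcal{Q}u\cdot\nabla\eta\,\delta\rho \les \|\bar x^{-\theta}\mathcal{Q}u\|_{L^4}\,\|\bar x^{\theta}\nabla\eta\|_{L^4}\,\|\delta\rho\|_{L^2} .$$
This requires also propagating a weighted bound for $\nabla\eta$, or, better, integrating by parts so that only $\bar x^\theta(\eta-\widetilde\rho)$ appears and the derivative falls on $\mathcal{Q}u$, $\delta\rho$ or $v$. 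After this, every source term is bounded by $C\nu^{-1/2}$ times norms that are integrable in time, plus $C\varepsilon$ coming from the $\varepsilon\Delta w$ term as before. Gronwall's inequality then gives \eqref{1.17}.

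\textbf{Main obstacle.} I expect the hardest point to be the weighted functional inequality in Step 2, together with placing the weights so that every term in the $\delta\rho$ equation involves $\eta-\widetilde\rho$ rather than an unweighted $\nabla\eta$. For the latter I would first try the integration-by-parts trick, using $\divg\mathcal{P}u=0$.
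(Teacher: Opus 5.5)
\textbf{The gap is in Step 2, and Step 3 depends on it.} Going from $\nu^{-1/4}$ to $\nu^{-1/2}$ requires a bound that is \emph{linear} in $\|\nabla\mathcal{Q}u\|_{L^2}$:
$\|\bar x^{-\theta}\mathcal{Q}u\|_{L^4}\le C\|\nabla\mathcal{Q}u\|_{L^2}\le C\nu^{-1/2}$.
The reason is that the only lossy term, $\int(\rho-\widetilde\rho)\,\mathcal{Q}u\cdot(\mathcal{P}u_t-v_t)\,\md x$ (after the time integration by parts), pairs $\mathcal{Q}u$ linearly with the non-small factor $\|(u_t,v_t)\|_{L^2}$.

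The route you actually describe does not give this bound. Controlling the local term by interpolation yields $\|\mathcal{Q}u\|_{L^2(B_1)}\le C\|\mathcal{Q}u\|_{L^2}^{1/2}\|\nabla\mathcal{Q}u\|_{L^2}^{1/2}\le C\nu^{-1/4}$. Hence you only get $\|\bar x^{-\theta}\mathcal{Q}u\|_{L^4}\le C\nu^{-1/4}$. This reproduces exactly the loss of Theorem \ref{thm1.2}, and gives \eqref{1.18} with $\nu^{-1/2}$ instead of $\nu^{-1}$.

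Your fallbacks (annular decomposition, weighted HLS) cannot produce the linear bound either, because it is false. Take $h\in C^\infty(\R)$ with $h=1$ on $(-\infty,1]$ and $h=0$ on $[2,\infty)$, and set $\Phi_R(x)=h(\log|x|/\log R)$ and $g_R=\nabla(x_1\Phi_R)\in C_c^\infty$. Then:
\begin{itemize}
\item $g_R$ is a gradient, so $\mathcal{Q}g_R=g_R$, and $g_R=e_1$ on $B_R$;
\item $|\nabla^2\Phi_R|\le C|x|^{-2}(\log R)^{-1}$ on the annulus, so $\|\nabla g_R\|_{L^2}\le C(\log R)^{-1/2}\to0$;
\item yet $\|\bar x^{-1}g_R\|_{L^2}$ and $\|\bar x^{-\theta}g_R\|_{L^4}$ stay bounded below by their mass on $B_1$.
\end{itemize}
The integrability of $\bar x^{-2}$ that you cite is precisely what allows this: near-constants carry weighted mass at vanishing $\dot H^1$ cost. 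So the local term cannot be dropped.

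\textbf{The paper's proof has the same hole.} Its \eqref{5.28} asserts exactly this linear bound. It derives it from a whole-space Hardy inequality $\|\bar x^{-1}\Phi\|_{L^2}\le C\|\nabla\Phi\|_{L^2}$, which fails on $\R^2$ (take $\Phi=\Phi_R$).

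What you can honestly get from $\|\mathcal{Q}u\|_{L^2}\le C$ and $\|\nabla\mathcal{Q}u\|_{L^2}\le C\nu^{-1/2}$ is, via a low/high-frequency split of the mean of $\mathcal{Q}u$ over $B_1$, the bound $\|\mathcal{Q}u\|_{L^2(B_1)}\le C\nu^{-1/2}(\log\nu)^{1/2}$ for large $\nu$. A rescaled $g_R$ shows this logarithm cannot be removed using those two norms alone. So this strategy delivers \eqref{1.17}--\eqref{1.18} only up to a logarithmic factor, unless additional low-frequency information on $\mathcal{Q}u$ is supplied.

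\textbf{Two smaller points in Step 3.}
\begin{itemize}
\item The terms containing $\mathcal{Q}u\cdot\nabla\eta$ and $\rho\,\mathcal{Q}u\cdot\nabla v$ need no weights. Since $\|\mathcal{Q}u\|_{L^4}^2\le C\nu^{-1/2}$, Young's inequality already bounds them by $C\nu^{-1/2}$, so no weighted bound on $\nabla\eta$ is needed.
\item To weight the lossy term you must first split $\rho-\widetilde\rho=(\rho-\eta)+(\eta-\widetilde\rho)$. The $\rho-\eta$ part is absorbed using its smallness and the singular weight $t^{-1}\|\cdot\|_{L^2}^2$, as the paper does with $\varphi+\phi$.
\end{itemize}
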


\begin{re}\label{re1.1} As a by-product of Theorems \ref{thm1.1} and \ref{thm1.2}, we obtain the global existence of strong solutions of the 2D compressible micropolar equations with/without angular viscosity (i.e. either \eqref{1.1} or \eqref{1.1} with $\varepsilon=0$), provided the bulk viscosity  is suitably large.  In particular, the latter one extends the results in \cite{DZ2010,LXZ2026} from the incompressible system to the compressible setting.
\end{re}

\begin{re}\label{re1.2}
By means of the global uniform-in-$\varepsilon$ estimates established in \eqref{1.11} and \eqref{1.12}, one can check the vanishing limit of angular viscosity  (i.e.  $\varepsilon\to0$) for the Cauchy problem \eqref{1.1}--\eqref{1.5}  without the extra assumption \eqref{1.8}. The convergence analysis can be done in a standard way by considering the differences of two solutions and using the standard $L^2$-method.  This also improves/extends the results in \cite{Chen2015,LXZ2026}, concerning  the vanishing limit of angular viscosity for the 2D homogeneous/nonhomogeneous incompressible micropolar equations,   to the 2D compressible case. We omit the details of the proof for simplicity.
\end{re}

\begin{re}\label{re1.3}We mention here that the condition $(\rho_0-\widetilde\rho, w_0)\in W^{1,4}$ is not essential and is only used to simplify the analysis. Indeed, it can be replaced by $W^{1,q}$ for any $2<q<\infty$ after some minor modifications (see Lemma \ref{lem4.2}).
\end{re}

We end this section with some comments on the proofs of Theorems \ref{thm1.1}--\ref{thm1.3}. The proofs rely on the global a priori estimates uniform in both the large bulk viscosity
$\nu$ and small angular viscosity $\varepsilon$. Because the local well-posedness of strong solutions follows by a standard fixed-point argument (see, e.g. \cite{MN1980}), the main task is to extend the solution to any finite time and to pass to the limits as $\nu\to\infty$ and $\varepsilon\to0$. The first key step is to establish the global uniform lower and upper bounds for the density via a bootstrap argument (see Proposition \ref{pro3.1}) by choosing $\nu> 1$ suitably large.  A crucial  ingredient is the  ``effective viscous flux" $F=\nu\divg u-(P(\rho)-P(\widetilde\rho))$ (see (\ref{3.6})), which enables us to reformulate the continuity equation (\ref{1.1})$_1$ into a transport equation with dissipation (see (\ref{3.9})) and reduces the density bounds to the control of $\|F\|_{L^\infty}$. Since $F$ satisfies the elliptic equation  $\Delta F=\divg (\rho \dot u)$ with $\dot u\triangleq u_t+u\cdot\nabla u$ being the material derivative, this control is further converted into the estimates of the material derivative and its gradient (see Lemmas \ref{lem3.1} and \ref{lem3.2}), where the weighted factor $\sigma(t)=\min\{1,t\}$ is used to eliminate the effects of initial layer. In contrast to that in \cite{LZ2026}, to deal with the coupling between the velocity and the micro-rotational velocity (especially, the handling of the term $\nabla^\bot w$ in (\ref{1.1})$_2$), we have to assume that the density is strictly away from vacuum.

With the density bounds at hand, Section \ref{sec4} derives the global higher-order regularity estimates of the solutions. We first prove the global bounds of $\|(\divg u, \nabla^\bot\cdot u, w)\|_{L^p_tL^\infty_x}$ for some $1<p<\infty$ (see Lemma \ref{lem4.1}), which are essential for the gradient estimates of the density and the micro-rotational velocity, by making full use of the ``effective viscous flux" $F$, the modified vorticity $H\triangleq \nabla^\bot\cdot u-2\zeta/(\mu+\zeta) w$ (see (\ref{3.35})) and the maximal principle. Then, based on the Beale–Kato–Majda type inequality, we estimate the $L^p$-norms ($p=2,4$) of $(\nabla \rho,\nabla w)$ and obtain the global higher-order estimates in Lemmas \ref{lem4.2} and \ref{lem4.3}, which are uniform in
$\nu$ and $\varepsilon$ and yield the global well-posedness stated in Theorem \ref{thm1.1}.

Based on the global uniform bounds achieved, the combined limits as $\nu\to\infty$ and $\varepsilon\to0$ can be justified by the standard compactness arguments and the limiting system is just the viscous and non-diffusive incompressible micropolar system (\ref{2.1}) with the initial data $\eta_0=\rho_0$, $v_0=\mathcal{P}u_0$ and $\chi_0=w_0$. To obtain the convergence rate in Theorem \ref{thm1.2}, we compare the compressible solution $(\rho,u,w)$ with the incompressible solution $(\eta,v,\chi)$ and estimate the differences $(\rho-\eta,\mathcal{P}u-v,w-\chi)$ in $L^2$-norms. The key idea is to introduce the auxiliary ``incompressible" density $\rho^*$, satisfying $\rho^*_t+\mathcal{P}u\cdot\nabla\rho^*=0$, and decompose $\rho-\eta=(\rho-\rho^*)+(\rho^*-\eta)$, which separates the effect of  acoustic part $\mathcal{Q}u$ from the velocity difference $\mathcal{P}u-v$. By deriving the equations for the two differences and using the uniform estimates from Sections \ref{sec3} and \ref{sec4}, we obtain the
$t$-weighted and singular $t$-weighted estimates stated in (\ref{5.10}), (\ref{5.12}) and (\ref{5.14}). It is worth pointing out that the $t$-growth estimate $\|\phi\|_{L^2}\les C(T)t$ ensures the singular $t$-weighted estimate $t^{-1}\|\phi\|_{L^2}^2\to0$ as $t\to0$, the latter of which will be used  to compensate for the absence of the estimate $\|u_t\|_{L^2_tH^1_x}$ resulting from the low regularity of the initial data. Then, subtracting the momentum and micro-rotational equations then gives the differential inequality (\ref{5.22}), and the Gronwall inequality leads to the convergence rate (\ref{1.13}), while (\ref{1.14}) follows from the uniform bound on $\divg u$ and the div–curl estimates.

Finally, Theorem \ref{thm1.3} improves the rate to the convergence rate of the incompressible limit from $\nu^{-1/4}$ to $\nu^{-1/2}$ under the additional decay condition (\ref{1.15}).
The proof relies on the weighted estimate (\ref{1.16}) for
$\eta-\widetilde\rho$ established in Lemma \ref{lem5.1}, which provides a refined control of the term $\tilde A_1$ (cf. (\ref{5.18})) in the velocity difference equation. More precisely, this weighted estimate allows one to exploit the fast decay of $\eta-\widetilde\rho$ at infinity and to handle the contribution of $(\eta-\widetilde\rho)(\mathcal{Q}u)(\mathcal{P}u_t-v_t)$ more efficiently than that in the proof of Theorem \ref{thm1.2}. The Hardy inequality on $\R^2$ then plays a crucial role in handling the singular weight $\bar x^{-\theta}$. This refined analysis, together with the decay of $\nabla(\mathcal{Q}u)$, leads to the faster convergence rates stated in   (\ref{1.17}) and (\ref{1.18}).

The rest of the paper is organized as follows. In Section \ref{sec2}, we collect some preliminary results and elementary inequalities that will be used repeatedly in the sequel. In Section \ref{sec3}, we prove  the global uniform lower and upper bounds for the density and derive  the lower-order estimates of the velocity and the micro-rotational velocity. These estimates are global in time and uniform in $\nu$ and $\varepsilon$. Section \ref{sec4} is devoted to the global higher-order regularity estimates of the solutions, which are valid on $[0,T]$ for any $0<T<\infty$ and uniform in $(\nu,\varepsilon)$ as well. The proofs of Theorems \ref{thm1.1}, \ref{thm1.2} and \ref{thm1.3} will be done in  Subsections \ref{sec5.1}, \ref{sec5.2} and \ref{sec5.3}, respectively.

\section{Preliminaries}\label{sec2}
In this section, we collect some known results and elementary inequalities. We begin with the global existence theorem of strong  solutions
of the 2D micropolar equations  for the nonhomogeneous incompressible fluid with zero angular viscosity. As aforemention in Section \ref{sec1}, to avoid the notational confusions, we rewrite   (\ref{1.7})  in the form:
\begin{align}
\begin{cases}\label{2.1}
 \eta_t+v\cdot\nabla \eta=0,\\[1mm]
  \eta v_t+\eta v\cdot\nabla v+\nabla \Pi =(\mu+\zeta)\Delta v
-2\zeta\nabla^{\bot} \chi ,\\[1mm]
\eta  \chi_t+\eta v\cdot\nabla \chi  +4\zeta \chi= 2\zeta\nabla^{\bot}\cdot v,\\[1mm]
\divg v=0,
\end{cases}
\end{align}
subject to the following initial conditions:
\begin{align}\label{2.2}
\begin{cases}
(\eta,v,\chi)|_{t=0}=(\eta_0,v_0,\chi_0)(x),\quad x\in\R^2,\\[1mm]
(\eta,v,\chi)(x,t)\rightarrow (\tilde\rho, 0, 0)\quad{\text{as}}\quad |x|\to\infty,
\end{cases}
\end{align}
where $0<\underline\rho< \tilde\rho < \overline\rho<\infty$   are the same positive constants as those in (\ref{1.9}).

\begin{lem}\label{lem2.1}   Assume that
\begin{equation}\label{2.3}
\underline\rho \leq \eta_0\leq \overline\rho, \quad(\eta_0-\widetilde\rho,\chi_0) \in H^1\cap W^{1,4},\quad v_0\in H^1,\quad
{\rm{div}} v_0=0.
\end{equation}
Then  for any  $0<T<\infty$, there exists a global unique strong solution $(\eta,v,\chi)$ to the problem \eqref{2.1}--\eqref{2.2} on $\R^2\times[0,T]$, satisfying $\eta(x,t)>0$ for all $(x,t)\in\R^2\times[0,T]$, and
\begin{equation}\label{2.4}
\begin{cases}
(\eta-\widetilde\rho, \chi) \in L^\infty(0,T; H^1\cap W^{1,4}),\quad v\in L^\infty(0,T; H^1)\cap L^2(0,T;H^2), \\[1mm]
(v_t, \chi_t)\in L^\infty(0,T; L^2),\;\; \nabla v\in L^{1+r}(0,T; L^\infty\cap  W^{1,4}),\;\; r\in(0, 1/4),\\[2mm]
\sqrt t v\in L^\infty(0,T; H^2),\;\; \sqrt t v_t\in L^\infty(0,T; L^2)\cap L^2(0,T; H^1).
\end{cases}
\end{equation}
\end{lem}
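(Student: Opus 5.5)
The plan is the classical one for nonhomogeneous incompressible flows: local well-posedness plus global a priori estimates on $[0,T]$, followed by a continuation argument. Local existence and uniqueness of strong solutions under \eqref{2.3} follow from a standard linearization/fixed-point scheme, since $\eta_0$ is bounded away from zero; this is the same argument as for nonhomogeneous Navier--Stokes. The work is therefore to bound the norms in \eqref{2.4} by constants depending only on $T$ and the data. Because $\divg v=0$, the transport equation gives $\underline\rho\les\eta\les\overline\rho$ for all time, and every $L^p$ norm of $\eta-\widetilde\rho$ is conserved.

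\textbf{Step 1: basic estimates.} Testing the momentum equation by $v$ and the $\chi$-equation by $\chi$ gives the energy identity. The coupling terms combine into $-4\zeta\int\chi\,\nabla^\bot\cdot v$, which is absorbed using $\mu\|\nabla v\|_{L^2}^2+4\zeta\|\chi\|_{L^2}^2$. Next, I use the $\chi$-equation along particle paths. Set $\omega=\nabla^\bot\cdot v$ and $H=\omega-\frac{2\zeta}{\mu+\zeta}\chi$. Then $(\mu+\zeta)\nabla^\bot H=\eta\dot v+\nabla\Pi$, up to sign conventions, where $\dot v=v_t+v\cdot\nabla v$. Hence, as for the effective flux, $\|\nabla H\|_{L^p}\les C\|\eta\dot v\|_{L^p}$. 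The $\chi$-equation becomes
\[
\eta\dot\chi+\Big(4\zeta-\tfrac{4\zeta^2}{\mu+\zeta}\Big)\chi=2\zeta H,
\]
a damped ODE along trajectories with coefficient $4\zeta\mu/(\mu+\zeta)>0$. Consequently $\|\chi\|_{L^p}\les C(\|\chi_0\|_{L^p}+\int_0^t\|H\|_{L^p})$, and the same holds for $p=\infty$.

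\textbf{Step 2: $H^1$ and $\dot v$ estimates.} Test the momentum equation by $v_t$ (or $\dot v$), and test the material-derivative equation by $\sigma\dot v$ with $\sigma=\min\{1,t\}$. This yields $\sup_t\|\nabla v\|_{L^2}^2+\int\|\sqrt\eta\dot v\|_{L^2}^2$ and $\sup_t\sigma\|\dot v\|_{L^2}^2+\int\sigma\|\nabla\dot v\|_{L^2}^2$. The nonlinear terms are handled by Ladyzhenskaya's inequality and the Gronwall inequality in 2D. The forcing $\nabla^\bot\chi$ enters only through $H$, so no derivative of $\chi$ is needed; the $\chi_t$ terms are controlled from the $\chi$-equation. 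Stokes estimates for $\eta$ bounded then give $v\in L^2H^2$ and $\sqrt t v\in L^\infty H^2$, together with the time-derivative bounds.

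\textbf{Step 3: Lipschitz bound and gradients of $(\eta,\chi)$ (main obstacle).} Since $\chi$ carries no diffusion, $\nabla\chi$ and $\nabla\eta$ in $L^2\cap L^4$ are controlled only if $\int_0^T\|\nabla v\|_{L^\infty}<\infty$. On the other hand, $\nabla v$ depends on $\omega=H+c\chi$, and $\chi$ is merely $W^{1,4}$. I therefore use a Beale--Kato--Majda type inequality,
\[
\|\nabla v\|_{L^\infty}\les C\big(\|\omega\|_{L^\infty}\log(e+\|\nabla\omega\|_{L^4})+1\big),
\]
with $\|\omega\|_{L^\infty}\les\|H\|_{L^\infty}+C\|\chi\|_{L^\infty}$. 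Here $\|H\|_{L^\infty}\les C\|H\|_{L^2}^{1/3}\|\nabla H\|_{L^4}^{2/3}$ lies in $L^{1+r}_t$ by the $\sigma$-weighted bound on $\dot v$ in $L^4$, which is integrable near $t=0$ only with exponent below $5/4$; this explains $r<1/4$. Differentiating the transport equations gives
\[
\frac{d}{dt}\|\nabla(\eta,\chi)\|_{L^p}\les C\|\nabla v\|_{L^\infty}\|\nabla(\eta,\chi)\|_{L^p}+C\|\nabla H\|_{L^p},
\]
and $\|\nabla\omega\|_{L^4}\les\|\nabla H\|_{L^4}+C\|\nabla\chi\|_{L^4}$. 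Combining these with the BKM inequality yields a logarithmic Gronwall inequality, which closes and gives $\nabla(\eta,\chi)\in L^\infty(L^2\cap L^4)$ and $\nabla v\in L^{1+r}(L^\infty\cap W^{1,4})$. With these bounds, the local solution extends to $[0,T]$, and uniqueness follows from a Lagrangian or $H^{-1}$-type difference estimate as in the nonhomogeneous Navier--Stokes theory.
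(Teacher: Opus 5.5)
Your proposal is correct and follows essentially the same route as the paper: energy and $H^1$ bounds, the $t$-weighted estimate for $\dot v$, time-integrable $L^\infty$ control of $\nabla^\bot\cdot v$ and $\chi$ through $H=\nabla^\bot\cdot v-\frac{2\zeta}{\mu+\zeta}\chi$ and the transport (maximum-principle) structure of the $\chi$-equation, then BKM with a logarithmic Gronwall argument for $\nabla(\eta,\chi)$ in $L^2\cap L^4$, and finally the higher-order bounds.

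A few minor fixes are needed. First, the Stokes bounds $v\in L^2(0,T;H^2)$ and $\sqrt t\,v\in L^\infty(0,T;H^2)$ require $\nabla\chi\in L^2$, because the forcing contains $-2\zeta\nabla^\bot\chi$, so they belong after your Step 3. Second, the $\nabla\chi$ inequality also carries the term $C(\|H\|_{L^\infty}+\|\chi\|_{L^\infty})\|\nabla\eta\|_{L^p}$ coming from $\nabla(\eta^{-1})$; this term is harmless in the Gronwall argument. Third, like the paper's Lemma \ref{lem4.3}-type argument, yours gives only $v_t\in L^2(0,T;L^2)$ together with $\sqrt t\,v_t\in L^\infty(0,T;L^2)$, and not $v_t\in L^\infty(0,T;L^2)$, which would force $v_0\in H^2$.
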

\begin{proof}Since the local well-posedness of strong solutions can be shown  in a standard way (see, e.g. \cite{BRF2003}), it suffices to prove the global estimates (\ref{2.4}) for the problem (\ref{2.1})--(\ref{2.2}).  Indeed, following the arguments in \cite[Lemma 3.1]{LXZ2026} step by step, we can show that for any $0\les t\les T$ with $0<T<\infty$,
\begin{equation}\label{2.5}
\underline\rho\les\eta(x,t)\les\overline\rho,\quad\forall\; x\in\R^2,\; t\ges0
\end{equation}
and
\begin{equation}\label{2.6}
\begin{aligned}
&\|(\eta-\widetilde\rho)(t)\|_{L^2}+\left\|v (t)\right\|_{H^1}+\|\chi (t)\|_{L^2\cap L^4} \\
&\quad+\int_0^t \left(\left\|\left( \dot v,\dot \chi\right)\right\|_{L^2}^2+\|\nabla v\|_{L^4}^4 +\|\nabla \Pi\|_{L^2}^2\right)\md s\leq C(T),
\end{aligned}
\end{equation}
where $\dot v\triangleq v_t+v\cdot\nabla v$ and $\dot \chi\triangleq \chi_t+v\cdot\nabla \chi$. Moreover, analogously to the derivation of (3.45) in \cite[Lemma 3.2]{LXZ2026}, we have
\begin{equation}\label{2.7}
\begin{aligned}
&\frac{1}{2}\frac{\md}{\md t}\left\| \sqrt\eta\dot v\right\|_{L^2}^2+\frac{\mu+\zeta}{2}\|\nabla^\bot\cdot \dot v\|_{L^2}^2\\
&\quad\leq \frac{\md}{\md t}\int   \Pi \partial_iv_j\partial_j v_i\mdx+C\left(1+ \left\|\dot \chi\right\|_{L^2}^2+\|\nabla v\|_{L^4}^4\right) +C\left\| \sqrt\eta \dot v\right\|_{L^2}^4 .
\end{aligned}
\end{equation}

By virtue of (\ref{2.6}) and (\ref{2.14}) (see Lemma \ref{lem2.5} below), we know that
\begin{align*}
&\int   \Pi \partial_iv_j\partial_j v_i\mdx \les C\|\Pi\|_{{\rm{BMO}}}\|\partial_iv_j\partial_j v_i\|_{\mathcal{H}^1}\\
&\quad \les C\|
\nabla \Pi\|_{L^2}\|\nabla v\|_{L^2}^2\les C\|\nabla \Pi\|_{L^2}\les C\|\sqrt\eta\dot v\|_{L^2},
\end{align*}
since it is easily derived from (\ref{2.2}) that
$$
\Delta\Pi=-\divg (\eta \dot v),
$$
which, combined with (\ref{2.5}), yields
$$
\|\nabla\Pi\|_{L^2}\les C\|\sqrt\eta\dot v\|_{L^2}.
$$

With the help of (\ref{2.5}), (\ref{2.6}) and the above observations, we obtain from (\ref{2.7}) by multiplying it by $t$ and integrating  over $(0,t)$ that
$$
t\|\dot v(t)\|_{L^2}^2+\int_0^t s\|\nabla^\bot\cdot \dot v\|_{L^2}^2\md s\leq C(T),\quad\forall\ 0\les t\les T,
$$
and hence, it follows from (\ref{2.6}) and (\ref{2.12}) (see Lemma \ref{lem2.3} below) that
\begin{equation}
t\|\dot v(t)\|_{L^2}^2+\int_0^ts\|\nabla  \dot v\|_{L^2}^2\md s\leq C(T),\quad\forall\ 0\les t\les T,\label{2.8}
\end{equation}
where we have used the divergence-free condition $\divg v=0$ to get that
$$
\|\divg \dot v\|_{L^2}\les \| |\nabla v|^2\|_{L^2}\leq \|\nabla v\|_{L^4}^2.
$$

Using (\ref{2.5}), (\ref{2.6}) and (\ref{2.8}), similarly to the proof of Lemma \ref{lem4.1}, we have
\begin{equation}\label{2.9}
\int_0^T\left(\|\dot v\|_{L^q}^\frac{q+1}{q}+\|\nabla^\bot\cdot v\|_{L^\infty}^\frac{q+1}{q}+\|\chi\|_{L^\infty}^\frac{q+1}{q}\right)\les C(T),\quad\forall\ 2<q<\infty.
\end{equation}
Thus, based upon  (\ref{2.5}), (\ref{2.6}), (\ref{2.8}) and (\ref{2.9}), we can make use of the BKM's inequality (cf. Lemma \ref{lem2.4} below) and the standard regularity theory of Stokes equations to deduce in a manner similar to the proof of Lemma \ref{lem4.2} that
$$
\|(\nabla\eta,\nabla\chi)(t)\|_{L^2}+\|(\nabla\eta,\nabla \chi)(t)\|_{L^4}\leq C(T),\quad\forall\ 0\les t\les T.
$$
Then, analogously to the proof of Lemma \ref{lem4.3}, we arrive at all the desired estimates stated in (\ref{2.4}) eventually.
\end{proof}

\begin{re} In \cite{LXZ2026}, the authors actually studied the vanishing limit of angular viscosity (i.e.  $\varepsilon\to 0$) of the global classical solutions to an initial-boundary value problem of \eqref{2.1} with large data on a smooth bounded  domain $\Omega$, subject to the Navier-slip type boundary condition for the velocity (i.e. $v\cdot n=0$ and $\nabla^\bot\cdot v=0$ on $\partial\Omega$), the Dirichlet boundary condition for the micro-rotational velocity (i.e. $\chi=0$ on $\partial\Omega$). It was assumed in \cite{LXZ2026} that $(\eta_0,\chi_0)\in W^{1,4}(\Omega)$ and $v_0\in  H^2(\Omega)$, the latter of which is stronger than the on in Lemma \ref{lem2.1} (c.p.  $v_0\in H^1$).
As an immediate consequence, the limiting system (i.e. the corresponding initial-boundary value problem of \eqref{2.1}) has a global unique classical solution $(\eta,v,\chi)$  on $\Omega\times[0,T]$ for any $0<T<\infty$, satisfying \eqref{2.4}  and the additional regularity,
$$
v\in L^\infty(0,T; H^2)\cap L^2(0,T; W^{2,4}),\quad
v_t \in L^\infty(0,T; L^2)\cap L^2(0,T; H^1),
$$
which can be achieved by using the standard Stokes estimates (see, e.g. \cite{Galdi,Te1984}). Indeed, compared with that in \cite{LXZ2026}, the Cauchy problem \eqref{2.1}--\eqref{2.2} is much easier to handle, since no boundary term arises in the calculations.
\end{re}

The following Gagliardo-Nirenberg-Sobolev's inequalities (see, e.g.  \cite{Nirenberg}) will be frequently used in the derivations of the global estimates.

\begin{lem}\label{lem2.2} For $p\geq2$, $q>1$ and $r>2$, assume that $f\in H^1 $ and $g\in L^q \cap W^{1,r}$. Then there  exists  a  positive constant $C$, depending on $p$, $q$ and $r$, such that
\begin{align}\label{2.10}
\|f\|_{L^p}&\leq C \|f\|_{L^2}^{\frac{2}{p}}\|\nabla f\|_{L^2}^{\frac{p-2}{p}},
\\ \label{2.11}
\|g\|_{L^\infty}&\leq C\|g\|_{L^q}^{\frac{q(r-2)}{2r+q(r-2)}}\|\nabla g\|_{L^r}^{\frac{2r}{2r+q(r-2)}}.
\end{align}
\end{lem}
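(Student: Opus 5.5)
The estimate \eqref{2.10} is the standard Ladyzhenskaya-type Gagliardo--Nirenberg inequality on $\R^2$. I would prove it by first treating $p=2k$ for integer $k\ges1$ and then interpolating. The base tool is the two-dimensional Sobolev inequality $\|h\|_{L^2}\les C\|\nabla h\|_{L^1}$ for $h\in W^{1,1}$. It follows from the elementary bound $|h(x)|\les \int|\partial_1 h(\cdot,x_2)|\,\md x_1$, the analogous bound with $\partial_2$, and the product trick of Gagliardo--Nirenberg.

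Applying this with $h=|f|^{p/2}$ gives
\[
\|f\|_{L^p}^{p}\les C\Bigl(\int |f|^{\frac p2-1}|\nabla f|\,\md x\Bigr)^2\les C\|f\|_{L^{p-2}}^{p-2}\|\nabla f\|_{L^2}^2 ,
\]
where the second step is Cauchy--Schwarz. For $p=4$ this is exactly $\|f\|_{L^4}^4\les C\|f\|_{L^2}^2\|\nabla f\|_{L^2}^2$. For general $p\ges 2$ I would avoid an induction by writing $\|f\|_{L^{p-2}}$ as an interpolation between $L^2$ and $L^p$ via H\"older. This gives an inequality of the form $\|f\|_{L^p}^p\les C\|f\|_{L^2}^{a}\|f\|_{L^p}^{b}\|\nabla f\|_{L^2}^2$ with $b<p$. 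Absorbing the $L^p$ factor then yields \eqref{2.10}, and the exponents are forced by scaling: replacing $f$ by $f(\lambda x)$ shows that the exponents $2/p$ and $(p-2)/p$ are the only admissible ones. For $2\les p<4$ the H\"older step is not needed; one can interpolate directly between $L^2$ and $L^4$. All of this is first done for $f\in C_c^\infty$ and then extended to $H^1$ by density, using Fatou's lemma on the left-hand side.

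For \eqref{2.11} I would use a local Morrey estimate combined with optimisation over the radius. Since $r>2$, Morrey's inequality on a ball $B_R(x)$ gives
\[
|g(x)-\bar g_{B_R(x)}|\les CR^{1-\frac2r}\|\nabla g\|_{L^r},
\]
where $\bar g_{B_R(x)}$ is the average of $g$ over the ball. H\"older's inequality bounds the average by $|\bar g_{B_R(x)}|\les CR^{-\frac2q}\|g\|_{L^q}$. Combining the two,
\[
\|g\|_{L^\infty}\les C\bigl(R^{-\frac2q}\|g\|_{L^q}+R^{1-\frac2r}\|\nabla g\|_{L^r}\bigr)
\]
holds for every $R>0$. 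Choosing $R$ so that the two terms balance, namely $R^{1-\frac2r+\frac2q}=\|g\|_{L^q}/\|\nabla g\|_{L^r}$, and simplifying the exponents gives exactly the powers $\frac{q(r-2)}{2r+q(r-2)}$ and $\frac{2r}{2r+q(r-2)}$. If $\nabla g=0$, then $g\in L^q$ is constant and therefore vanishes, so the inequality holds trivially.

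The main obstacle is minor. It is the bookkeeping of the exponents in the absorption step for non-integer $p$ in \eqref{2.10}, which the scaling argument cross-checks, together with verifying that the constants depend only on $p,q,r$. Since both inequalities are classical (see \cite{Nirenberg}), it is also acceptable simply to cite them.
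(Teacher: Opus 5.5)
Your proof is correct. It takes a different route from the paper, though only in a weak sense: the paper gives no proof of this lemma. It quotes both inequalities as classical Gagliardo--Nirenberg inequalities and cites \cite{Nirenberg}, which is the fallback you mention at the end.

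The two routes compare as follows. The citation reads the exponents off the general interpolation inequality via the dimensional-balance condition. Your argument is self-contained:
\begin{itemize}
\item For \eqref{2.10}, you apply Gagliardo's embedding $\|h\|_{L^2}\le C\|\nabla h\|_{L^1}$ to $h=|f|^{p/2}$, then use Cauchy--Schwarz, a H\"older interpolation and an absorption step. For $p\ge 4$ the interpolation exponent is $\alpha=4/(p-2)^2$, which gives $\|f\|_{L^p}^{2p/(p-2)}\le C\|f\|_{L^2}^{4/(p-2)}\|\nabla f\|_{L^2}^2$. That is exactly \eqref{2.10}.
\item For \eqref{2.11}, you use the local Morrey estimate on $B_R(x)$ and optimise in $R$. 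The balancing choice $R^{1-2/r+2/q}=\|g\|_{L^q}/\|\nabla g\|_{L^r}$ produces precisely the exponents $\frac{q(r-2)}{2r+q(r-2)}$ and $\frac{2r}{2r+q(r-2)}$.
\end{itemize}
What your route buys is transparency. It shows that $r>2$ enters only through Morrey's estimate, and that the constants depend only on $p$, $q$, $r$. What the citation buys is brevity.

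One point of wording. You say that "for general $p\ge 2$" you interpolate $\|f\|_{L^{p-2}}$ between $L^2$ and $L^p$. That step is only available when $p-2\ge 2$: for $2<p<4$, the norm $\|f\|_{L^{p-2}}$ is not controlled by $L^2\cap L^p$ on $\mathbb{R}^2$. So the split at $p=4$ is necessary, not merely convenient. Your separate treatment of $2\le p<4$, by interpolating between $L^2$ and the $L^4$ case, is therefore essential, and it does close the argument.
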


Next, let us recall the standard ``div-curl" estimates (cf.  \cite{Brezis1974}).
\begin{lem}\label{lem2.3}
For any $k\in \mathbb{Z}^+$  and  $1<p<\infty$, there exists a positive constant $C$, depending only on $k$ and $p$, such that
\begin{align}\label{2.12}
\|\nabla u\|_{W^{k,p}}\leq C(\|\nabla\cdot u\|_{W^{k,p}}+\|\nabla^\bot\cdot u\|_{W^{k,p}}),\quad\forall\ u\in W^{k+1,p}.
\end{align}
\end{lem}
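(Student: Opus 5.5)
The estimate \eqref{2.12} is a classical fact and is cited from \cite{Brezis1974}; the plan is to reduce it to the $L^p$-boundedness of Riesz transforms on $\R^2$. The reduction uses the algebraic identity
$$
-\Delta u=\nabla^\bot(\nabla^\bot\cdot u)\cdot(-1)-\nabla\divg u ,
$$
or more precisely $\Delta u=\nabla\divg u+\nabla^\bot(\nabla^\bot\cdot u)$. This is checked componentwise. For the first component,
$$
\partial_1(\partial_1u_1+\partial_2u_2)-\partial_2(\partial_1u_2-\partial_2u_1)=\Delta u_1,
$$
and the second component is analogous.

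First assume $u\in C_c^\infty(\R^2)$, or more generally in the Schwartz class. For $j=1,2$ we then have
$$
\partial_j u=\partial_j\nabla(\Delta)^{-1}\divg u+\partial_j\nabla^\bot(\Delta)^{-1}(\nabla^\bot\cdot u).
$$
Each operator $\partial_j\partial_k(\Delta)^{-1}$ equals $-R_jR_k$, a composition of Riesz transforms. By the Calder\'on--Zygmund theorem, $R_jR_k$ is bounded on $L^p$ for $1<p<\infty$. This gives the case $k=0$:
$$
\|\nabla u\|_{L^p}\le C(p)\big(\|\divg u\|_{L^p}+\|\nabla^\bot\cdot u\|_{L^p}\big).
$$

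For higher $k$ there are two points to note. Derivatives $\partial^\alpha$ commute with $\divg$, with $\nabla^\bot\cdot$, and with the Riesz multipliers. Applying the $k=0$ bound to $\partial^\alpha u$ for every $|\alpha|\le k$ and summing therefore yields \eqref{2.12}, with $C$ depending only on $k$ and $p$.

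The only genuinely delicate step is the density argument for a general $u\in W^{k+1,p}$. Approximate $u$ by $u_n\in C_c^\infty$ in $W^{k+1,p}$. Both sides of \eqref{2.12} are continuous with respect to the $W^{k+1,p}$ norm, so the inequality passes to the limit. In applying the Riesz-transform representation we use that $u\in L^p$ excludes nonzero constant or harmonic polynomial components. Without this, $\nabla u$ could fail to be recovered from $\divg u$ and $\nabla^\bot\cdot u$ alone; the approximation by compactly supported functions, for which the representation is exact, handles this issue automatically.
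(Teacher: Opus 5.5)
Your proof is correct, but the paper gives no argument to compare it with: it quotes the estimate as classical from Bourguignon--Brezis. That work is set on bounded smooth domains. There the div--curl estimate comes from elliptic regularity and generally involves a boundary term (in $u\cdot n$) and a lower-order term in $u$ on the right-hand side.

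On $\R^2$ your Fourier-multiplier route is the natural one, and it gives the purely homogeneous form that the paper actually uses. From $\Delta u=\nabla\divg u+\nabla^\bot(\nabla^\bot\cdot u)$ you get $\widehat{\partial_j u}=\frac{\xi_j\xi}{|\xi|^2}\widehat{\divg u}+\frac{\xi_j\xi^\bot}{|\xi|^2}\widehat{\nabla^\bot\cdot u}$. These symbols are products of Riesz transforms, so Calder\'on--Zygmund theory gives the $L^p$ bound. Commuting $\partial^\alpha$ with the multipliers and using density of $C_c^\infty$ in $W^{k+1,p}$ then completes the argument.

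Your remark that integrability of $u$ rules out examples like $u=(x_1,-x_2)$ is the right point. That field is divergence- and curl-free with $\nabla u\neq0$, and in the whole-space setting integrability plays the role of the lower-order term in the bounded-domain version.

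Since you start from $k=0$, you also cover how the lemma is used in the paper, e.g.\ for $\|\nabla u\|_{L^4}$ and $\|\nabla \dot u\|_{L^2}$. The phrase $k\in\mathbb{Z}^+$ in the statement might otherwise seem to exclude that case.
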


It is  known that the $W^{k,p}$-estimates stated in  (\ref{2.12}) doesn't holds for the endpoint case $p=\infty$. So, to deal with $\|\nabla u\|_{L^\infty}$, we need the following
  Beale-Kato-Majda's type inequality (see \cite{Kato, Huang}).
\begin{lem}\label{lem2.4}
For any $2<q<\infty$, assume that $\nabla u\in  H^1\cap W^{1,q}$. Then there exists a positive constant $C=C(q)$, such that
\begin{align}\label{2.13}
\|\nabla u\|_{L^\infty}\leq C(\|\nabla\cdot u\|_{L^\infty}+\|\nabla^\bot\cdot u\|_{L^\infty})\ln(e+\|\nabla^2 u\|_{L^q})+C\|\nabla u\|_{L^2}+C.
\end{align}
\end{lem}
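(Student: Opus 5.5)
This inequality is the standard logarithmic (Beale--Kato--Majda / Brezis--Gallouet--Wainger) estimate, and I will prove it by splitting the symbol of $\nabla u$ into low, intermediate and high frequencies. Since $u$ is a two-dimensional vector field, each component $\partial_j u_i$ is a zeroth-order Fourier multiplier applied to $\operatorname{div} u$ and $\nabla^\bot\cdot u$. Indeed, $\nabla u=\nabla(-\Delta)^{-1}\nabla(\operatorname{div} u)+\nabla(-\Delta)^{-1}\nabla^\bot(\nabla^\bot\cdot u)$ (up to signs), so $\nabla u=\mathcal{R}(\operatorname{div} u,\nabla^\bot\cdot u)$, where $\mathcal{R}$ is a matrix of products of Riesz transforms $R_jR_k$.

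The first step is to fix a Littlewood--Paley decomposition and write
$$
\nabla u=\dot S_{0}\nabla u+\sum_{0\les j\les N}\dot\Delta_j\nabla u+\sum_{j>N}\dot\Delta_j\nabla u,
$$
where $N\ges 1$ will be chosen later. For the low frequencies, Bernstein's inequality gives $\|\dot S_0\nabla u\|_{L^\infty}\les C\|\nabla u\|_{L^2}$. For the high frequencies, Bernstein again gives
$$
\|\dot\Delta_j\nabla u\|_{L^\infty}\les C2^{2j/q}\|\dot\Delta_j\nabla u\|_{L^q}\les C2^{j(2/q-1)}\|\nabla^2u\|_{L^q},
$$
and since $q>2$ this is a convergent geometric series, bounded by $C2^{-N(1-2/q)}\|\nabla^2u\|_{L^q}$.

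For each middle block, $\dot\Delta_j\mathcal{R}$ is convolution with a kernel $2^{2j}K(2^j\cdot)$ with $K\in L^1$ uniformly in $j$. This is the key point: Riesz transforms fail on $L^\infty$, but their frequency-localized pieces are bounded there. Hence
$$
\|\dot\Delta_j\nabla u\|_{L^\infty}\les C\big(\|\operatorname{div} u\|_{L^\infty}+\|\nabla^\bot\cdot u\|_{L^\infty}\big),
$$
and summing the $N+1$ middle blocks gives the factor $C(N+1)(\|\operatorname{div} u\|_{L^\infty}+\|\nabla^\bot\cdot u\|_{L^\infty})$.

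Finally I choose $N$ as the integer part of $\frac{q}{(q-2)\ln 2}\ln(e+\|\nabla^2u\|_{L^q})$, plus one. With this choice the high-frequency tail is bounded by $C$, and $N+1\les C\ln(e+\|\nabla^2u\|_{L^q})$. Adding the three contributions yields \eqref{2.13}. The assumption $\nabla u\in H^1\cap W^{1,q}$ ensures that all the norms involved are finite and that the decomposition converges in $L^\infty$: the homogeneous series converges because $\nabla u\in L^2$, so there are no polynomial ambiguities. The only step needing real care is the uniform $L^1$ bound on the localized Riesz kernels, which follows from scaling since $\varphi(\xi)\xi_k\xi_l/|\xi|^2$ is Schwartz.
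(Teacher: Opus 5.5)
Your argument is correct; the only difference is one of route. The paper gives no proof of its own and simply cites Kato and Huang--Li--Xin. The standard argument behind those references works in physical space. One writes $\nabla u$ as a Calder\'on--Zygmund operator $K*$ applied to $f\in\{\divg u,\nabla^\bot\cdot u\}$ and splits the kernel into three regions:
\begin{itemize}
\item on $|y|<\delta$, the near part is controlled by $C\delta^{1-2/q}\|\nabla^2u\|_{L^q}$, using the vanishing mean of $K$ together with the H\"older continuity of $f$ (which follows from $\nabla f\in L^q$, $q>2$), or alternatively by moving one derivative onto $f$;
\item on $\delta\les|y|\les 1$, the annulus gives $C\ln(1/\delta)\|f\|_{L^\infty}$;
\item on $|y|>1$, the far part gives $C\|f\|_{L^2}$, since $|y|^{-2}\in L^2(\{|y|>1\})$.
\end{itemize}
Finally one takes $\delta$ to be a negative power of $e+\|\nabla^2u\|_{L^q}$.

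Your Littlewood--Paley proof is the frequency-side mirror of this: low frequencies correspond to the far field, the $N+1$ middle dyadic blocks to the logarithmic annulus, and high frequencies to the near field. What your route buys is that the cancellation and principal-value bookkeeping disappears. The uniform $L^1$ bound on the kernels of $\dot\Delta_j R_kR_l$, obtained by scaling, is exactly the substitute for the mean-zero property of $K$. It also makes visible that only $\nabla u\in L^2$ and $\nabla^2u\in L^q$ are used. Moreover, $\|\nabla^2u\|_{L^q}$ could be replaced by any quantity controlling $\sup_j 2^{j\alpha}\|\dot\Delta_j\nabla u\|_{L^\infty}$ for some $\alpha>0$. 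The physical-space route is more elementary, since it needs no Littlewood--Paley machinery, and it is the form in which the estimate is usually quoted.
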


Finally, let  $\mathcal{H}^1 $ and  ${\text{BMO}} $ be  the standard Hardy  and   BMO spaces on $\R^2$, respectively. Then,  $(\mathcal{H}^1)^*\cong {\text{BMO}}$ and  $\|u\|_{\text{BMO}}\leq C\|\nabla u\|_{L^2}$ (cf.  \cite{Meyer1975}). Moreover, it follows from   \cite[Theorem II.1]{Coifman1993} that

\begin{lem}\label{lem2.5} Assume that $(u,v)\in L^2$ satisfies  $\nabla^\bot \cdot u=0$ and $\nabla\cdot v=0$ in the sense of distribution. Then,
\begin{equation}\label{2.14}
u\cdot v \in \mathcal{H}^1,\quad \|u\cdot v\|_{\mathcal{H}^1}\leq \|u\|_{L^2}\|v\|_{L^2}.
\end{equation}
\end{lem}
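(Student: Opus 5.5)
The statement is the div–curl lemma of Coifman–Lions–Meyer–Semmes, and I would follow their maximal-function proof. A caveat first: the argument gives the bound with some absolute constant $C$, which I take to be implicit in \eqref{2.14}. Since $\nabla^\bot\cdot u=0$ and $u\in L^2(\R^2)$, there is a potential $\phi\in\dot H^1$ with $u=\nabla\phi$ (built on the Fourier side as $\phi=-(-\Delta)^{-1}\divg u$). By density I may assume $u,v$ smooth with compact Fourier support; the closedness of $\mathcal H^1$ under $L^1$ limits with uniformly bounded norms then passes the estimate to general $(u,v)$. Because $\divg v=0$, the product can be written as a divergence:
$$u\cdot v=\divg(\phi v).$$
This extra cancellation is what upgrades $u\cdot v$ from $L^1$ to $\mathcal H^1$.

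I would use the grand maximal function characterization of $\mathcal H^1$. Fix $h\in C_c^\infty(B(0,1))$ with $\|\nabla h\|_{L^\infty}\le1$, and set $h_t(x)=t^{-2}h(x/t)$. For fixed $x$ and $t>0$ let $B=B(x,t)$ and $\phi_B$ the average of $\phi$ over $B$. Integrating by parts and using $\divg v=0$ to subtract the constant $\phi_B$,
$$
(h_t*(u\cdot v))(x)=\int h_t(x-y)\,\nabla\phi\cdot v\,dy=\int (\phi-\phi_B)(y)\,\nabla_y[h_t(x-y)]\cdot v(y)\,dy .
$$
Since $|\nabla h_t|\le t^{-3}$ on $B$, this gives
$$|(h_t*(u\cdot v))(x)|\le C\,t^{-3}\int_B|\phi-\phi_B||v|\,dy .$$
Choose exponents $\alpha,\beta\in(1,2)$ with $1/\alpha+1/\beta=1+1/2$; for instance $\alpha=\beta=4/3$. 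Apply Hölder with $\beta$ on $v$ and $\beta'$ on $\phi-\phi_B$. The Sobolev–Poincaré inequality on $B$, with $\alpha^*=2\alpha/(2-\alpha)=\beta'$, then gives
$$\Big(t^{-2}\int_B|\phi-\phi_B|^{\beta'}\Big)^{1/\beta'}\le C t\Big(t^{-2}\int_B|\nabla\phi|^\alpha\Big)^{1/\alpha}.$$
The powers of $t$ cancel exactly, and so
$$\sup_{h,t}|(h_t*(u\cdot v))(x)|\le C\,\big(M|u|^\alpha\big)^{1/\alpha}(x)\,\big(M|v|^\beta\big)^{1/\beta}(x),$$
where $M$ is the Hardy–Littlewood maximal operator.

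Finally, integrate in $x$ and use Cauchy–Schwarz. Because $2/\alpha>1$ and $2/\beta>1$, $M$ is bounded on $L^{2/\alpha}$ and on $L^{2/\beta}$, so
$$\|(M|u|^\alpha)^{1/\alpha}\|_{L^2}\le C\|u\|_{L^2},\qquad \|(M|v|^\beta)^{1/\beta}\|_{L^2}\le C\|v\|_{L^2}.$$
Hence the grand maximal function of $u\cdot v$ lies in $L^1$, which gives \eqref{2.14}.

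The main obstacle is choosing the exponents. They must be strictly below $2$, so that the maximal operator is bounded on $L^{2/\alpha}$ and $L^{2/\beta}$. At the same time they must be compatible with the two-dimensional Sobolev–Poincaré exponent, so that the scaling in $t$ closes. Taking $\alpha=\beta=4/3$ satisfies both requirements.
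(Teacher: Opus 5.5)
Your proof is correct and is exactly the Coifman--Lions--Meyer--Semmes maximal-function argument behind \cite[Theorem II.1]{Coifman1993}, which is all the paper relies on here (it cites the result and gives no proof of its own); your choice $\alpha=\beta=4/3$ satisfies $1/\alpha+1/\beta=1+1/2$ with $\alpha^*=\beta'=4$, so the scaling in $t$ closes. Your caveat about the constant is also right: the argument gives $\|u\cdot v\|_{\mathcal{H}^1}\le C\|u\|_{L^2}\|v\|_{L^2}$, not the constant $1$ written in \eqref{2.14}, and this is harmless because every use of the lemma in the paper absorbs a generic $C$.
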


\section{The lower and upper bounds of the density}\label{sec3}
The main purpose of this section is to prove the lower and upper bounds of the density, and to derive the lower-order estimates of the velocity and the micro-rotational velocity. Throughout this section, we suppose  that for some given positive number $0<K<\infty$,
\begin{equation}\label{3.1}
\rho_0-\widetilde\rho \in L^2\cap L^\infty,\ \ \underline\rho \leqslant \rho_0\leqslant \overline\rho,\ \ (u_0,w_0)\in H^1\ \ {\text{and}}\  \ \sqrt\nu\|\divg u_0\|_{L^2}\leqslant K.
\end{equation}
We also assume without loss of generality  that $\nu\ges 1$ and $0<\varepsilon<1$, since we are interested in the global theory of the problem (\ref{1.1})--(\ref{1.5}) with  large bulk viscosity and small angular viscosity.

The uniform lower and upper bounds of the density will be achieved by using the bootstrap argument, based on the following proposition.

\begin{pro}\label{pro3.1} Assume that the pair of functions $(\rho, u,w)$ is a strong solution of the problem \eqref{1.1}--\eqref{1.5} on $\R^2\times[0,T]$,  satisfying
\begin{align}\label{3.2}
\frac{1}{4}\underline{\rho} \les \rho(x,t) \les 2\overline{\rho},\quad\forall\  (x,t)\in\R^2\times[0,T].
\end{align}
Then there exists a positive number $\nu_0>1$, depending  on $A$, $\mu$, $\zeta$, $\gamma$, $\widetilde\rho$, $\underline\rho$, $\overline\rho$, $K$ and the initial norms in \eqref{3.1}, but not on $\nu$, $\varepsilon$ and $T$, such that
\begin{equation}\label{3.3}
\frac{1}{2}\underline{\rho}\les\rho (x,t)\les \frac{3}{2}\overline{\rho}, \quad\forall\  (x,t)\in\R^2\times[0,T].
\end{equation}
provided that $\nu\ges \nu_0$.
\end{pro}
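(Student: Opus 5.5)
The plan is to run a bootstrap built on the effective viscous flux $F\triangleq\nu\divg u-(P(\rho)-P(\widetilde\rho))$. First, along particle paths $X(t;x)$ with $\dot X=u(X,t)$, the continuity equation gives
$$
\frac{\md}{\md t}\log\rho(X(t),t)=-\divg u=-\frac{1}{\nu}\big(P(\rho)-P(\widetilde\rho)\big)-\frac{1}{\nu}F .
$$
The pressure term acts as a damping toward $\widetilde\rho$. If $\rho>\widetilde\rho$, the term $-(P(\rho)-P(\widetilde\rho))/\nu$ is negative, and if $\rho<\widetilde\rho$ it is positive. A Zlotnik-type comparison argument therefore reduces \eqref{3.3} to showing
$$
\frac{1}{\nu}\int_0^T\|F\|_{L^\infty}\,\md t\ \text{ is small},
$$
or more precisely that $\frac1\nu\int_{t_1}^{t_2}\|F\|_{L^\infty}\md t\le N_0+N_1(t_2-t_1)/\nu$ with $N_0$ small and $N_1$ small relative to the damping. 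With $\log$ this small, $\rho$ cannot leave $[\frac12\underline\rho,\frac32\overline\rho]$, since initially $\underline\rho\le\rho_0\le\overline\rho$.

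Second, I will prove estimates uniform in $\nu$ and $\varepsilon$ under \eqref{3.2}. These come in three layers.
\begin{itemize}
\item[(i)] \emph{Energy.} The basic energy estimate bounds $\sup_t\big(\|\sqrt\rho u\|_{L^2}^2+\|\sqrt\rho w\|_{L^2}^2+\|G(\rho)\|_{L^1}\big)$ together with $\int(\mu\|\nabla u\|^2+\nu\|\divg u\|^2+\varepsilon\|\nabla w\|^2+\zeta\|\nabla^\bot\cdot u-2w\|^2)$. The micropolar terms combine into a nonnegative quantity.
\item[(ii)] \emph{First-order estimate.} Multiply the momentum equation by $u_t$ (or $\dot u$) and the $w$-equation by $w_t$. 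Writing the viscous part as $(\mu+\zeta)\nabla^\bot(\nabla^\bot\cdot u)+\nabla F$, this gives bounds on $\sup_t\big((\mu+\zeta)\|\nabla^\bot\cdot u\|^2+\nu^{-1}\|F\|^2\big)+\int\|\sqrt\rho\dot u\|^2$. Condition \eqref{1.10} keeps $\nu\|\divg u_0\|^2\le K^2$ bounded. The $w$-equation is handled without $\varepsilon$: multiply by $w_t$ or $\dot w$ and use only the damping $4\zeta w$, with $\rho\ge\underline\rho/4$ used to control $\dot w$. This is where the vacuum-free assumption enters. The coupling term $2\zeta\nabla^\bot w\cdot\dot u$ is integrated by parts onto $\nabla^\bot\cdot\dot u$, so no gradient of $w$ is needed.
\item[(iii)] \emph{Weighted material-derivative estimate (Lemmas~3.1--3.2 style).} Apply $\partial_t+\divg(u\,\cdot)$ to the momentum equation, test with $\sigma\dot u$ where $\sigma=\min\{1,t\}$, and obtain $\sup\sigma\|\dot u\|^2+\int\sigma\|\nabla\dot u\|^2\le C$ uniformly.
\end{itemize}
The commutator terms involve $\nabla u$ in $L^4$. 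These are controlled by $\|\nabla u\|_{L^4}\le C(\|\nabla^\bot\cdot u\|_{L^4}+\nu^{-1}\|F\|_{L^4}+\nu^{-1}\|P-P(\widetilde\rho)\|_{L^4})$, combined with the elliptic bounds $\|\nabla F\|_{L^p}+(\mu+\zeta)\|\nabla H\|_{L^p}\le C\|\rho\dot u\|_{L^p}+C\|w\|$-type terms. Here $H$ is the modified vorticity $\nabla^\bot\cdot u-\tfrac{2\zeta}{\mu+\zeta}w$.

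Third, I close the $F$ bound. From $\Delta F=\divg(\rho\dot u)$ and Gagliardo–Nirenberg \eqref{2.11},
$$
\|F\|_{L^\infty}\le C\|F\|_{L^2}^{1/3}\|\rho\dot u\|_{L^4}^{2/3}\le C\nu^{1/6}\|\nabla\dot u\|^{\cdot}\cdots,
$$
using $\|F\|_{L^2}\le C\sqrt\nu$. Integrating in time, the integrals over $(0,1)$ use $\sigma$ together with Hölder's inequality, and the large-time part uses the dissipation. This yields $\frac1\nu\int_0^T\|F\|_{L^\infty}\le C\nu^{-\beta}(1+T)$-type bounds with some $\beta>0$.

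The main obstacle is making this bound $T$-independent, since $\nu_0$ must not depend on $T$. I would handle it in the Zlotnik form $\frac1\nu\int_{t_1}^{t_2}\|F\|_{L^\infty}\le C\nu^{-\beta}+C\nu^{-\beta}(t_2-t_1)/\nu$. For $\nu\ge\nu_0$ large, the linear-in-time part is dominated by the pressure damping $(P(\rho)-P(\widetilde\rho))/\nu$ once $\rho$ is near $\frac32\overline\rho$ or $\frac12\underline\rho$. This requires the time-integrated quantities, such as $\int_0^\infty\|\nabla\dot u\|^2$, to be bounded uniformly on the large-time side. The difficulty is securing that uniform control while keeping every constant independent of $\varepsilon$ despite the $w$-coupling.
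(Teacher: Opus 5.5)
Your architecture is the paper's: the effective viscous flux $F$, the damped transport form of the continuity equation, the bound $\nu^{-1}\|F\|_{L^\infty}\les C\nu^{-5/6}\|\dot u\|_{L^4}^{2/3}$ from Gagliardo--Nirenberg and $\nu^{-1/2}\|F\|_{L^2}\les C$, and a $\sigma$-weighted estimate for $\dot u$. Your Zlotnik comparison can replace the paper's device. The paper linearizes the pressure term by the mean value theorem for $(\rho-\widetilde\rho)_+$ and $(\widetilde\rho-\rho)_+$, which gives
$\|(\rho-\widetilde\rho)_+(t)\|_{L^\infty}\les\|(\rho_0-\widetilde\rho)_+\|_{L^\infty}+C\nu^{-1}\int_0^te^{-\alpha(t-s)/\nu}\|F\|_{L^\infty}\md s$.
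By H\"older and $\int_0^te^{-3\alpha(t-s)/(2\nu)}\md s\les C\nu$, the right-hand side is $\les C\nu^{-1/6}$.

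However, either route closes only if $\int_0^t(\|\dot u\|_{L^2}^2+\|\nabla u\|_{L^4}^4)\md s$ and $\int_0^t\sigma(\|\nabla^\bot\cdot\dot u\|_{L^2}^2+\nu\|\dot V\|_{L^2}^2)\md s$ are bounded independently of $t$, $T$, $\nu$ and $\varepsilon$. This is exactly what you leave open as ``the difficulty'', and it is the crux. If these integrals grow linearly in the length of the interval, your slope $N_1/\nu$ is of size $\nu^{-5/6}$. The $O(\nu^{-1})$ pressure damping can never dominate that, so no $T$-independent $\nu_0$ follows.

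The missing ideas are in Lemmas \ref{lem3.1} and \ref{lem3.2}.

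(a) The $H^1$-level inequality must be arranged so that every Gronwall coefficient and every remaining source is built from $\|\nabla u\|_{L^2}^2$, $\|w\|_{L^2}^2$, $\varepsilon\|\nabla w\|_{L^2}^2$ and $\nu\|\divg u\|_{L^2}^2$. These lie in $L^1(0,\infty)$ by the basic energy identity; $\|w\|_{L^2}^2$ comes from $\mu\|\nabla^\bot\cdot u\|_{L^2}^2+\zeta\|\nabla^\bot\cdot u-2w\|_{L^2}^2$.

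(b) Two sources are not integrable from the energy alone.
\begin{itemize}
\item The term $\|w\|_{L^4}^4$ is absorbed by an $L^4$ estimate for $w$, written with $H$ so that the damping $\frac{4\mu\zeta}{\mu+\zeta}w$ appears.
\item The term $\nu^{-2}\|P(\rho)-P(\widetilde\rho)\|_{L^4}^4$ enters $\|\nabla u\|_{L^4}$ through $\divg u=\nu^{-1}(F+P(\rho)-P(\widetilde\rho))$. Without further work it contributes $C\nu^{-2}T$. The paper uses the cubic identity $\frac{\md}{\md t}\int(P(\rho)-P(\widetilde\rho))^3\mdx=\nu^{-1}\|P(\rho)-P(\widetilde\rho)\|_{L^4}^4+\text{Rem.}$ to get $\nu^{-1}\int_0^\infty\|P(\rho)-P(\widetilde\rho)\|_{L^4}^4\md t\les C$.
\end{itemize}

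(c) The $w$-equation cannot be handled ``without $\varepsilon$''. At the $H^1$ level no $\|\nabla\dot u\|_{L^2}$ is available, so the coupling term cannot be put on $\nabla^\bot\cdot\dot u$. It must instead be rewritten as a time derivative plus $\int\dot w\,\nabla^\bot\cdot u\,\mdx$. Testing the $w$-equation with $\dot w$ then produces $2\varepsilon\int u\cdot\nabla w\,\Delta w\,\mdx$. The paper bounds this uniformly in $\varepsilon$ by integrating by parts onto $w$ and using $\varepsilon\|\nabla^2 w\|_{L^2}\les C(\|\sqrt\rho\dot w\|_{L^2}+\|\nabla^\bot\cdot u\|_{L^2})$ together with $\|w\|_{L^4}$.

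(d) In the weighted estimate, the term $\nu\int\dot V\,\partial_iu\cdot\nabla u_i\,\mdx$ cannot be absorbed by Cauchy--Schwarz, because $\nu\int\|\nabla u\|_{L^4}^4\md t$ is not uniformly bounded. The paper rewrites $\nu\dot V$ through $F$ and integrates by parts in time.

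Once these $T$-, $\nu$- and $\varepsilon$-uniform bounds are in hand, your Zlotnik step also closes.
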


The proof of Proposition \ref{pro3.1} is built upon the following two lemmas, which are concerned with the lower-order estimates of the solutions and the global weighted estimates of the material derivatives of $(u,w)$, respectively. For simplicity,  we denote by $C$ the various positive constants, which may depend  on $\mu$, $\zeta$, $A$,  $\gamma$, $\widetilde\rho$, $\underline\rho$, $\overline\rho$, $K$ and the initial norms in \eqref{3.1}, but not on $\varepsilon$, $\nu$ and $T$. Sometimes, we use $C(\alpha)$ to emphasize the dependence on $\alpha$. Moreover,  we write $\|(f,g)\|_{X}\triangleq\|f\|_{X}+\|g\|_{X}$ and $\|f\|_{X\cap Y}\triangleq \|f\|_{X}+\|f\|_{Y}$ for the normed spaces $X$ and $Y$.

\begin{lem}[Global lower-order esitimates]  \label{lem3.1}Assume that  \eqref{3.2} holds. There exists a positive constant $\nu_1>1$, depending only on $\mu$, $\zeta$, $A$, $\gamma$, $\widetilde\rho$, $\underline\rho$ and $\overline\rho$, but not on $\varepsilon$, $\nu$ and $T$, such that if $\nu\ges\nu_1$, then for any $0\les t\les T$,
\begin{equation}\label{3.4}
\begin{aligned}
&\left(\|\rho-\widetilde\rho\|_{L^2}^2+  \| u \|_{H^{1}}^{2}+\|w \|_{L^2}^2+\|w\|_{L^4}^4 + \nu \|{\rm div} u \|_{L^2}^2+\varepsilon  \|\nabla w \|_{L^2}^2\right)(t)\\
&\quad +\int_0^t\left(\|(w,\nabla u)\|_{L^2}^2+\|(w,\nabla u)\|_{L^4}^4+\| (\dot u, \dot w)\|_{L^2}^2+\nu\|{\rm div} u\|_{L^2}^2\right)\md s\\
&\quad +\int_0^t\left(\nu^{-1} \|P(\rho) -  P(\widetilde\rho) \|_{L^4}^4+\varepsilon\|\nabla w\|_{L^2}^2 + \varepsilon^2\|\nabla^2w\|_{L^2}^2\right)\md s\les C(\underline\rho,\overline\rho).
\end{aligned}
\end{equation}
\end{lem}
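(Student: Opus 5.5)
The plan is to prove \eqref{3.4} in three energy-type steps, each uniform in $\nu$ and $\varepsilon$, with \eqref{3.2} making $\rho$ comparable to a positive constant throughout.

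\textbf{Step 1: basic energy estimate.} Multiply \eqref{1.1}$_2$ by $u$ and \eqref{1.1}$_3$ by $w$, add them, and use the continuity equation with the potential $G(\rho)=\rho\int_{\widetilde\rho}^\rho (P(s)-P(\widetilde\rho))s^{-2}\md s$. The micro-rotational coupling terms combine as
$$(\mu+\zeta)\|\nabla^\bot\cdot u\|_{L^2}^2-4\zeta\int w\,\nabla^\bot\cdot u\,\mdx+4\zeta\|w\|_{L^2}^2 ,$$
and this expression dominates $\mu\|\nabla^\bot\cdot u\|_{L^2}^2+c\|w\|_{L^2}^2$. Together with $\nu\|\divg u\|_{L^2}^2+\varepsilon\|\nabla w\|_{L^2}^2$, this controls $\|\rho-\widetilde\rho\|_{L^2}$ (since $G\sim(\rho-\widetilde\rho)^2$ by \eqref{3.2}), $\|u\|_{L^2}$, $\|w\|_{L^2}$, and the dissipation $\int_0^t(\|\nabla u\|_{L^2}^2+\|w\|_{L^2}^2)$. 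For the $L^4$ norm of $w$, multiply \eqref{1.1}$_3$ by $|w|^2w$. Young's inequality on $2\zeta\nabla^\bot\cdot u\,|w|^2w$ against the damping $4\zeta|w|^4$ leaves $C\|\nabla^\bot\cdot u\|_{L^4}^4$. I will carry this term along and close it in Step 2.

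\textbf{Step 2: $H^1$ estimate via $F$ and $H$.} Multiply the momentum equation by $\dot u$, and the $w$-equation by $\dot w$. The latter gives $\|\dot w\|_{L^2}^2$ and $\frac{\md}{\md t}(\varepsilon\|\nabla w\|^2+2\zeta\|w\|^2)$, with the $\varepsilon u\cdot\nabla w\,\Delta w$ term absorbed using $\varepsilon^2\|\nabla^2 w\|^2$, which comes from elliptic regularity of the $w$-equation. Write the momentum equation as $\rho\dot u=\nabla F+(\mu+\zeta)\nabla^\bot H-\ldots$, using $F=\nu\divg u-(P-P(\widetilde\rho))$ and the modified vorticity $H$. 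This yields
$$\frac{\md}{\md t}\Big(\tfrac{\mu+\zeta}{2}\|\nabla^\bot\cdot u\|^2+\tfrac{\nu}{2}\|\divg u\|^2-\int (P-P(\widetilde\rho))\divg u\Big)+\|\sqrt\rho\dot u\|^2\le\ldots$$
The pressure term in the energy is handled by $\frac{1}{2\nu}\|P-P(\widetilde\rho)\|^2$, using the $L^2$ bound of $\rho-\widetilde\rho$. The cubic terms $\int\nabla u\,\nabla u\,\nabla u$ are bounded through $\|\nabla u\|_{L^3}^3$. Elliptic estimates give $\|\nabla F\|_{L^2}+\|\nabla H\|_{L^2}\le C(\|\dot u\|_{L^2}+\|\nabla w\|\ldots)$. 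Note that $\nabla^\bot w$ enters $H$, which is why $H$ is built with the $w$-correction. The key consequence is
$$\|\nabla u\|_{L^4}\le C\nu^{-1}\big(\|F\|_{L^4}+\|P-P(\widetilde\rho)\|_{L^4}\big)+C\|H\|_{L^4}+C\|w\|_{L^4},$$
where Gagliardo--Nirenberg is applied to $F$ and $H$.

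\textbf{Main obstacle: closing the $L^4$ terms.} The hardest step is closing $\int_0^t\|\nabla u\|_{L^4}^4$ and $\nu^{-1}\int_0^t\|P-P(\widetilde\rho)\|_{L^4}^4$ without Gronwall growth in $T$. For the pressure term, I would use the transport-damping equation for $P$: $P_t+u\cdot\nabla P+\gamma P\divg u=0$ with $\divg u=\nu^{-1}(F+P-P(\widetilde\rho))$. Multiplying by $|P-P(\widetilde\rho)|^2(P-P(\widetilde\rho))$ produces the damping $\nu^{-1}\|P-P(\widetilde\rho)\|_{L^4}^4$, with a forcing $\nu^{-1}\int|F||P-\widetilde P|^3$ absorbed via Young's inequality. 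The terms $\|F\|_{L^4}^4\le C\|F\|_{L^2}^2\|\nabla F\|_{L^2}^2$ and $\|H\|_{L^4}^4$ are bounded by energy quantities times $\|\dot u\|^2$. Taking $\nu\ge\nu_1$ large makes the $F$-contributions small, while $H$ and $w$ are controlled by the energy functional times the dissipation. Summing Steps 1--2 with suitable weights gives an inequality of the form $E'+D\le C E\,D_0$, where $D_0$ is integrable by Step 1, so Gronwall applies with a constant independent of $T$. The $\nu\int\|\divg u\|^2$ and $\int\nu\|\divg u\|^2$ bounds follow from Step 1 together with $\sqrt\nu\|\divg u_0\|\le K$.
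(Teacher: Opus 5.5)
Your overall architecture matches the paper's: a basic energy estimate, an $H^1$ functional built on $F$ and $H$, damped $L^4$ estimates for $P(\rho)-P(\widetilde\rho)$ and $w$, and Gronwall with a time-integrable coefficient. The gap is in the closing step, and it sits in the $L^4$ estimate for $w$.

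After Young's inequality you keep $C\|\nabla^\bot\cdot u\|_{L^4}^4$. Its $H$-part is
\[
\|H\|_{L^4}^4\le C\|H\|_{L^2}^2\|\nabla H\|_{L^2}^2\le C\|H\|_{L^2}^2\|\sqrt\rho\dot u\|_{L^2}^2,
\]
which is energy times dissipation: $\|\dot u\|_{L^2}^2$ belongs to $D$, not to an a priori integrable $D_0$. So you get $E'+D\le C E D$, which does not close. A small weight on the $w$-estimate does not help. The $H^1$ estimate itself produces $C_1\|w\|_{L^4}^4$ with an $O(1)$ coefficient (for instance from $\|w\|_{L^4}^2\|\sqrt\rho\dot w\|_{L^2}$ in the $\varepsilon$-term). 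Hence the $w$-damping must enter with a large weight $C_3$, which amplifies the bad term.

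The same objection applies to your pressure estimate if you Young the forcing into $\nu^{-1}\|F\|_{L^4}^4\lesssim(\nu^{-1}\|F\|_{L^2}^2)\|\dot u\|_{L^2}^2$. Here $\nu^{-1}\|F\|_{L^2}^2\approx\nu\|\divg u\|_{L^2}^2$ is bounded but not small for large $\nu$. The paper instead splits $\|F\|_{L^2}\le\nu\|\divg u\|_{L^2}+\|P(\rho)-P(\widetilde\rho)\|_{L^2}$ and uses $\int_0^t\nu\|\divg u\|_{L^2}^2\le C$.

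The missing idea is to keep only one power of $H$. Write the $w$-equation as $\rho\dot w+\frac{4\mu\zeta}{\mu+\zeta}w-\varepsilon\Delta w=2\zeta H$, so the damping $\frac{4\mu\zeta}{\mu+\zeta}>0$ is exact. Then
\[
C\|H\|_{L^4}\|w\|_{L^4}^3\le C\|H\|_{L^2}^{1/2}\|\sqrt\rho\dot u\|_{L^2}^{1/2}\|w\|_{L^4}^3\le\delta\|\sqrt\rho\dot u\|_{L^2}^2+\frac{8\mu\zeta}{\mu+\zeta}\|w\|_{L^4}^4+C(\delta)\|H\|_{L^2}^2\|w\|_{L^4}^4 ,
\]
with $\delta$ small and $\|H\|_{L^2}^2$ integrable in time by the basic energy. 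The bound on $\int_0^t\|\nabla u\|_{L^4}^4$ is then recovered only a posteriori from \eqref{3.38}.

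Two further steps fail as stated, for uniformity in $\nu$ and in $\varepsilon$ respectively.

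First, the cubic terms carry a hidden factor $\nu$. The worst is $\int F\,\partial_iu_j\partial_ju_i$, which in your Hoff-type formulation reads $\nu\int\divg u\,\partial_iu_j\partial_ju_i$. Any H\"older bound through $\nu\|\nabla u\|_{L^3}^3$, or through $\|F\|_{L^p}$ with $p<\infty$, loses a positive power of $\nu$, because only $\nu^{-1}\|F\|_{L^2}^2$ is uniformly bounded. The paper writes $\partial_iu_j\partial_ju_i=(\divg u)^2+2\nabla^\bot u_2\cdot\nabla u_1$. It treats the first part via $\divg u=\nu^{-1}(F+P(\rho)-P(\widetilde\rho))$. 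It treats the second part by the div--curl lemma (Lemma~\ref{lem2.5}) and $\mathcal H^1$--BMO duality, which gives $C\|\nabla F\|_{L^2}\|\nabla u\|_{L^2}^2\le\frac14\|\sqrt\rho\dot u\|_{L^2}^2+C\|\nabla u\|_{L^2}^4$; see \eqref{3.41}.

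Second, $2\varepsilon\int u\cdot\nabla w\,\Delta w$ cannot simply be absorbed by $\varepsilon^2\|\nabla^2 w\|_{L^2}^2$, because $\nabla w$ is only controlled with weight $\varepsilon^{1/2}$. For example, $\varepsilon\|u\|_{L^4}\|\nabla w\|_{L^4}\|\nabla^2w\|_{L^2}$ is of order $\varepsilon^{-3/4}$ in terms of the controlled quantities $\varepsilon^{1/2}\|\nabla w\|_{L^2}$ and $\varepsilon\|\nabla^2 w\|_{L^2}$. The paper integrates by parts twice, see \eqref{3.33}, reducing the term to $\varepsilon\int w\,(\divg u\,\Delta w-\partial_iu_j\partial_{ij}^2w)$ and $\varepsilon\int\divg u\,|\nabla w|^2$, the latter handled through $F$. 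These are bounded by quantities of the type $\|w\|_{L^4}\|\nabla u\|_{L^4}\,\varepsilon\|\nabla^2w\|_{L^2}$. This is exactly why the $L^4$ norms of $w$ and $\nabla u$ have to enter the argument.
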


To eliminate the initial layer due to the lower regularity of the initial data, we introduce the weighted factor $\sigma(t)\triangleq\min\{1,t\}$.

\begin{lem}[Global weighted estimates]\label{lem3.2} Let $\nu_1>1$ be the positive number determined in Lemma \ref{lem3.1}. Then for any $\nu\ges \nu_1$ and $0\les t\les T$,
\begin{equation}\label{3.5}
\sigma(t)\| \dot{u}(t)\|_{L^2}^2 +\int_0^t\sigma(s)\left( \|\nabla \dot u \|_{L^2}^2+\nu \| \dot V \|_{L^2}^2\right)\md s\les C(\underline\rho,\overline\rho),
\end{equation}
where $V \triangleq {\rm div} u$ and $\dot V \triangleq V_t+u\cdot\nabla V$.
\end{lem}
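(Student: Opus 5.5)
The plan is a Hoff-type estimate on the material derivative, in the spirit of \cite{LZ2026}. Apply $\dot u^j(\partial_t+\divg(u\,\cdot))$ to the $j$-th component of the momentum equation. We write it as
$$
\rho\dot u=\nabla F+(\mu+\zeta)\nabla^\bot(\nabla^\bot\cdot u)-2\zeta\nabla^\bot w,\qquad F=\nu V-(P(\rho)-P(\widetilde\rho)).
$$
Multiplying by $\dot u$ and integrating gives
$$
\frac12\frac{\md}{\md t}\|\sqrt\rho\dot u\|_{L^2}^2=\int \dot u\cdot\big(\partial_t\nabla F+\divg(u\otimes\nabla F)\big)+(\mu+\zeta)\int\dot u\cdot\big(\partial_t\nabla^\bot\omega+\divg(u\otimes\nabla^\bot\omega)\big)-2\zeta\int\dot u\cdot\big(\partial_t\nabla^\bot w+\divg(u\otimes\nabla^\bot w)\big),
$$
where $\omega=\nabla^\bot\cdot u$.

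For the $F$ term we integrate by parts and use the identity
$$
\dot F=\nu\dot V-P'(\rho)\dot\rho=\nu\dot V+\rho P'(\rho)V,
$$
which gives the dissipation $\nu\|\dot V\|_{L^2}^2$. The commutator terms are of the form $\nu\int V\,\partial_iu_j\partial_j u_i$ and $\nu\int \dot V\,\partial_iu_j\partial_ju_i$. The pressure terms are of the form $\int (P-P(\widetilde\rho))\ldots$ and $\int\rho P'(\rho)V\,\divg\dot u$. The vorticity part gives the dissipation $(\mu+\zeta)\|\nabla^\bot\cdot\dot u\|_{L^2}^2$ plus cubic terms $\int|\nabla u|^2|\nabla\dot u|$. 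Since $\divg\dot u=\dot V+\partial_iu_j\partial_ju_i$, Lemma~\ref{lem2.3} turns these two dissipations into control of $\|\nabla\dot u\|_{L^2}^2$, up to $\|\nabla u\|_{L^4}^4$.

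The micro-rotation term requires care because $\varepsilon$ may be $0$, so we must not use $\nabla w$. After integration by parts it becomes
$$
2\zeta\int \dot w\,\nabla^\bot\!\cdot\dot u+\text{(terms like } \int w\,\partial_ku\cdot\nabla^\bot\dot u^k\text{)}.
$$
This is bounded by $\delta\|\nabla\dot u\|_{L^2}^2+C\|\dot w\|_{L^2}^2+C\|w\|_{L^4}^2\|\nabla u\|_{L^4}^2$. By Lemma~\ref{lem3.1}, each of these is integrable in time and uniform in $\varepsilon$.

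We then multiply by $\sigma(t)$ and integrate. The term $\sigma'\|\sqrt\rho\dot u\|_{L^2}^2$ is controlled by $\int_0^t\|\dot u\|_{L^2}^2$ from \eqref{3.4}, and $\rho$ is bounded below by \eqref{3.2}. The remaining terms are bounded by the quantities in \eqref{3.4}: $\int\|\nabla u\|_{L^4}^4$, $\nu\int\|V\|_{L^2}^2$, and $\nu^{-1}\int\|P-P(\widetilde\rho)\|_{L^4}^4$. A Gronwall factor also appears, of the form $\sigma\|\nabla u\|_{L^4}^4\le\ldots\|\dot u\|_{L^2}^2$, whose coefficient is integrable.

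The main obstacle is the term $\nu\int\dot V\,\partial_iu_j\partial_ju_i$, together with cubic terms like $\int|\nabla u|^3$. I would treat them with Young's inequality, giving $\frac{\nu}{2}\|\dot V\|_{L^2}^2+C\nu\|\nabla u\|_{L^4}^4$. However, $\nu\|\nabla u\|_{L^4}^4$ is not $\nu$-uniformly integrable, so this crude bound fails. Instead I would write $\partial_iu_j\partial_ju_i=\divg(u\cdot\nabla u)-u\cdot\nabla V$ and use the $\mathcal H^1$--BMO duality of Lemma~\ref{lem2.5}, splitting $u=\mathcal Pu+\mathcal Qu$. The $\mathcal Qu$ contributions are of size $\|\nabla\mathcal Qu\|\lesssim\|V\|$, and each gains a factor $\nu^{-1/2}$. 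This absorbs the large $\nu$, possibly after one further time-integration by parts that moves $\partial_t$ onto $\sigma\nu\int V\,\partial_iu_j\partial_ju_i$, which is controlled by $\nu\|V\|_{L^2}^2$ and $\|\nabla u\|_{L^2}$. For the remaining quartic terms I would use the $L^4$ estimate of $\nabla u$ via $F$ and $\omega$: $\|\nabla u\|_{L^4}\le C\nu^{-1}(\|F\|_{L^4}+\|P-P(\widetilde\rho)\|_{L^4})+C\|\omega\|_{L^4}$, with $\|\nabla F\|_{L^2}+\|\nabla\omega\|_{L^2}\le C(\|\dot u\|_{L^2}+\|\nabla w\|\ldots)$. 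Since $\nabla w$ is unavailable, the vorticity part must instead go through the modified vorticity $H=\omega-\frac{2\zeta}{\mu+\zeta}w$, which satisfies $(\mu+\zeta)\nabla^\bot H=\rho\dot u-\nabla F$.
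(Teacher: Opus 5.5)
You correctly single out the one dangerous term, $N_8=-\nu\int\dot V\,\partial_iu_j\partial_ju_i$, but neither mechanism you propose for it closes.

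The $\mathcal{H}^1$--BMO duality of Lemma~\ref{lem2.5} needs the partner of the Hardy-space factor to be in BMO. Here that would mean a bound on $\nu\|\nabla\dot V\|_{L^2}$, whereas the dissipation only gives $\nu\|\dot V\|_{L^2}^2$. Rewriting $\partial_iu_j\partial_ju_i=\divg(u\cdot\nabla u)-u\cdot\nabla V$ just shifts a derivative onto $\dot V$. The splitting $u=\mathcal{P}u+\mathcal{Q}u$ does dispose of the pieces containing $\nabla(\mathcal{Q}u)$, since $\sqrt\nu\|\divg u\|_{L^4}\le\nu^{-1/2}(\|F\|_{L^4}+\|P(\rho)-P(\widetilde\rho)\|_{L^4})$ gains a negative power of $\nu$. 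However, the piece $\nu\int\dot V\,\partial_i(\mathcal{P}u)_j\partial_j(\mathcal{P}u)_i$ carries no small factor, and Young's inequality leaves $\nu\|\nabla u\|_{L^4}^4$, which is exactly what you said fails.

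Your fallback, a time integration by parts with $\nu V$ itself, is also wrong as stated. The boundary term $\nu\int V\,\partial_iu_j\partial_ju_i$ is not controlled by $\nu\|V\|_{L^2}^2$ and $\|\nabla u\|_{L^2}$. You would need $\nu V\in{\rm BMO}$; otherwise you only get $\nu\|V\|_{L^2}\|\nabla u\|_{L^4}^2\sim\sqrt\nu\|\nabla u\|_{L^4}^2$. Moreover, the bulk term $\nu\int V\,\partial_t(\partial_iu_j\partial_ju_i)$ and the convective part $\nu\int(u\cdot\nabla V)\,\partial_iu_j\partial_ju_i$ force you to differentiate $\nu V=F+P(\rho)-P(\widetilde\rho)$. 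That produces $\nabla\rho$, which is unavailable here: Lemma~\ref{lem4.2} is proved using \eqref{3.5}, and only with $T$-dependent constants. The alternative is to leave $\|\nu V\|_{L^4}\sim\nu^{1/4}$ against $\nabla u_t$, which is not uniform either.

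The missing idea is to use the identity you already wrote, $\nu\dot V=\dot F-\gamma P(\rho)V$, before integrating by parts, so that only $F$ ever carries $\partial_t$. This gives
$N_8=-\int F_t\,\partial_iu\cdot\nabla u_i-\int(u\cdot\nabla F)\,\partial_iu\cdot\nabla u_i+\gamma\int P(\rho)V\,\partial_iu\cdot\nabla u_i$.
\begin{itemize}
\item The last two terms are harmless, being bounded by $\|u\|_{L^\infty}\|\nabla u\|_{L^4}^2\|\nabla F\|_{L^2}$ and by $\int|\nabla u|^3$.
\item In the first term, write $-\frac{\md}{\md t}\int F\,\partial_iu\cdot\nabla u_i+\int F\,\partial_t(\partial_iu\cdot\nabla u_i)$. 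Integrate the bulk part by parts in space and substitute $u_t=\dot u-u\cdot\nabla u$. Then only $\nabla F$ and $F\divg u=\nu^{-1}F(F+P(\rho)-P(\widetilde\rho))$ appear, both $\nu$-uniformly controlled by $\|\nabla F\|_{L^p}\le C\|\dot u\|_{L^p}$ and \eqref{3.12}.
\item The boundary term is where Lemma~\ref{lem2.5} genuinely enters, with $F$ rather than $\dot V$ in BMO. Using $\partial_iu\cdot\nabla u_i=(\divg u)^2+2\nabla^\bot u_2\cdot\nabla u_1$ and $\|F\|_{\rm BMO}\le C\|\nabla F\|_{L^2}\le C\|\dot u\|_{L^2}$, it is bounded by $\frac14\|\sqrt\rho\dot u\|_{L^2}^2+C$ thanks to \eqref{3.4}. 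After multiplication by $\sigma$ it is absorbed by $\sigma\|\sqrt\rho\dot u\|_{L^2}^2$ on the left.
\end{itemize}
The rest of your plan matches the paper's proof. This includes the vorticity and micro-rotation terms, the use of $\divg\dot u=\dot V+\partial_iu_j\partial_ju_i$, and the $\sigma$-weighting with Gronwall using $\|\dot u\|_{L^2}^2\in L^1$. Without the $F$-based time integration by parts, however, \eqref{3.5} is not uniform in $\nu$.
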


The proofs of Lemma \ref{lem3.1} and \ref{lem3.2} will be done in the coming  two subsections. With Lemmas \ref{lem3.1} and \ref{lem3.2} at hand, we are now ready to prove Proposition \ref{pro3.1}.

\begin{proof}[Proof of Proposition \ref{pro3.1}] Let $F$ and $\dot f$ be the ``effective viscous flux"  and the material derivative defined respectively by
\begin{equation}\label{3.6}
F \triangleq \nu\divg u-(P(\rho)-P(\widetilde\rho))\quad {\rm{and}}\quad \dot f=f_t+u\cdot\nabla f \triangleq {\rm D}_t f.
\end{equation}
Then it is easily seen from (\ref{1.1})$_2$ that
\begin{equation}
\Delta F=\divg (\rho\dot u)\;\;{\text{with}}\;\;\dot u\triangleq u_t+u\cdot\nabla u,\label{3.7}
\end{equation}
from which and (\ref{3.2}), we deduce
\begin{equation}\label{3.8}
\|\nabla F\|_{L^p}\leq C\|\rho\dot u\|_{L^p}\leq C(\bar\rho)\|\dot u\|_{L^p},\quad \forall\ 1<p<\infty.
\end{equation}

Next, in terms of (\ref{3.6})$_1$, we can write (\ref{1.1})$_1$ in the form:
$$
{\rm D}_t\rho=\rho_t+u\cdot\nabla \rho=-\rho\divg u=-\nu^{-1}\rho \left(F+P(\rho)-P(\widetilde\rho)\right),
$$
and hence,
\begin{equation}
{\rm D}_t\rho +\nu^{-1}\rho \left(P(\rho)-P(\widetilde\rho)\right) =-\nu^{-1}\rho F.\label{3.9}
\end{equation}

To be continued, define
$$\Theta(x,t)\triangleq \max\{\rho-\widetilde{\rho},0\}\quad {\text{and}}\quad \widetilde \Theta(x,t)\triangleq\max\{\widetilde{\rho}-\rho,0\}.$$
Then, owing to  the mean-value theorem and (\ref{3.2}), we infer from (\ref{3.9}) that there exists a positive constant $\alpha$, depending only on $\widetilde\rho$, $\underline\rho$ and $\overline\rho$, such that
$$
{\rm D}_t\Theta+ \alpha \nu^{-1}\Theta \les C{\nu}^{-1}|F|\quad {\text{and}}\quad {\rm D}_t\widetilde\Theta+ \alpha \nu^{-1}\widetilde\Theta \les C \nu^{-1}|F|,
$$
and consequently,
\begin{equation}\label{3.10}
\|\Theta(t)\|_{L^\infty}\les \|\Theta_0\|_{L^\infty}+I\quad {\text{and}}\quad \|\widetilde\Theta(t)\|_{L^{\infty}}\les \|\widetilde\Theta_0\|_{L^\infty}+I,
\end{equation}
where
$$
I\triangleq
C\nu^{-1}\int_0^te^{-\frac{\alpha(t-s)}{\nu}}\|F\|_{L^\infty}\md s.
$$

Obviously, it suffices to deal with $\|F\|_{L^\infty}$. To this end, we first make use of (\ref{2.10}), (\ref{2.11}), (\ref{2.12}) and (\ref{3.8}) to get that
\begin{equation}\label{3.11}
\begin{aligned}
\nu^{-1}\|F\|_{L^\infty}&\les C\nu^{-1}\|F\|_{L^2}^{\frac{1}{3}}\|\nabla F\|_{L^4}^\frac{2}{3}\les C\nu^{-\frac{5}{6}}\|\dot u\|_{L^4}^{\frac{2}{3}}\\
&\les C\nu^{-\frac{5}{6}}\left(\|\dot u\|_{L^2}+\|\nabla \dot u\|_{L^2}\right)^{\frac{2}{3}}\\
&\les C\nu^{-\frac{5}{6}}\left(\|\dot u\|_{L^2}+\|\nabla^\bot\cdot \dot u\|_{L^2}+\|\divg \dot u\|_{L^2}\right)^{\frac{2}{3}},
\end{aligned}
\end{equation}
since it follows from (\ref{3.2}) and (\ref{3.4}) that
\begin{equation}\label{3.12}
\nu^{-\frac{1}{2}}\|F\|_{L^2}\les \nu^{-\frac{1}{2}}\left(\nu\|\divg u\|_{L^2}+\|P(\rho)-P(\widetilde\rho)\|_{L^2}\right)\les C.
\end{equation}

Recalling that $V=\divg u$ and $\dot V=V_t+u\cdot\nabla V$, we have
$$
\divg \dot u=\divg u_t+\divg ( u\cdot\nabla  u)=\dot V+\partial_i u_j\partial_j u_i,
$$
from which it is easily seen that
\begin{equation}
\|\dot V\|_{L^2}-C\|\nabla u\|_{L^4}^2\les \|\divg \dot u\|\les \|\dot V\|_{L^2}+C\|\nabla u\|_{L^4}^2.\label{3.13}
\end{equation}
Thus, inserting (\ref{3.13}) into (\ref{3.11}), we obtain
$$
\nu^{-1}\|F\|_{L^\infty}  \les C\nu^{-\frac{5}{6}}\left(\|\dot u\|_{L^2}+\|\nabla u\|_{L^4}^2\right)^{\frac{2}{3}}+ C\nu^{-\frac{5}{6}}\left(\|\nabla^\bot\cdot \dot u\|_{L^2}+\|\dot V\|_{L^2}\right)^{\frac{2}{3}},
$$
and consequently,
\begin{equation}\label{3.14}
\begin{aligned}
I
 & \les C\nu^{-\frac{5}{6}} \int_0^te^{-\frac{\alpha(t-s)}{\nu}} \left(\|\dot u\|_{L^2}+\|\nabla u\|_{L^4}^2\right)^{\frac{2}{3}}\md s\\
&\quad +C\nu^{-\frac{5}{6}}\int_0^te^{-\frac{\alpha(t-s)}{\nu}} \left(\|\nabla^\bot\cdot \dot u\|_{L^2}+\|\dot V\|_{L^2}\right)^{\frac{2}{3}}\md s\triangleq I_1+I_2.
\end{aligned}
\end{equation}

Using (\ref{3.4}) and H\"{o}lder inequality, we find
$$
I_1
  \les C\nu^{-\frac{5}{6}} \left(\int_0^te^{-\frac{3\alpha(t-s)}{2\nu}}\md s \right)^{\frac{2}{3}}\left[\int_0^t \left(\|\dot u\|_{L^2}^2+\|\nabla u\|_{L^4}^4\right)\md s\right]^{\frac{1}{3}} \les C\nu^{-\frac{1}{6}}.
$$
Analogously, letting $ 1\wedge t\triangleq\min\{1,t\}$, by (\ref{3.5}) we derive
\begin{align*}
I_2&\les C\nu^{-\frac{5}{6}}\int_0^t \sigma^{-\frac{1}{3}}(s) e^{-\frac{\alpha(t-s)}{\nu}} \left(\sigma(s)\|\nabla^\bot\cdot \dot u\|_{L^2}^2+ \sigma(s)\|\dot V\|_{L^2}^2\right)^{\frac{1}{3}}\md s\\
&\les C\nu^{-\frac{5}{6}}\left(\int_0^t \sigma^{-\frac{1}{2}}(s) e^{-\frac{3\alpha(t-s)}{2\nu}}\md s\right)^{\frac{2}{3}} \left[\int_0^t\sigma(s)\left(\|\nabla^\bot\cdot \dot u\|_{L^2}^2+ \|\dot V\|_{L^2}^2\right) \md s\right]^{\frac{1}{3}}\\
&\les C\nu^{-\frac{5}{6}}\left(\int_0^{1\wedge t} s^{-\frac{1}{2}} \md s\right)^{\frac{2}{3}}+ C\nu^{-\frac{5}{6}}\left(\int_{1\wedge t}^t   e^{-\frac{3\alpha(t-s)}{2\nu}}\md s\right)^{\frac{2}{3}}\\
&\les C\nu^{-\frac{5}{6}}+C\nu^{-\frac{1}{6}}\les C\nu^{-\frac{1}{6}}.
\end{align*}

Now, plugging the estimates of $I_1$ and $I_2$ into (\ref{3.14}) and combining it with (\ref{3.10}), by (\ref{3.1}) we konw that there exists a positive constant $\Lambda=\Lambda(\underline\rho,\overline\rho)$, depending on $\underline\rho$ and $\overline\rho$, such that
\begin{equation}\label{3.15}
\|\Theta(t)\|_{L^\infty}\les \overline\rho-\widetilde\rho+\Lambda \nu^{-\frac{1}{6}}\quad{\text{and}}\quad \|\widetilde\Theta(t)\|_{L^\infty}\les \widetilde\rho-\underline\rho+\Lambda  \nu^{-\frac{1}{6}}.
\end{equation}
So, if $\nu$ is chosen to be large enough such that
$$
\nu\ges\nu_0\triangleq\max\left\{\nu_1, \left(2\Lambda \underline\rho^{-1}\right)^{6}\right\},
$$
then we conclude from (\ref{3.15}) that
$$
\frac{1}{2}\underline\rho\les \widetilde\rho-(\widetilde\rho-\underline\rho)-\frac{1}{2}\underline\rho \les \rho(x,t)\les\widetilde\rho+(\overline\rho-\widetilde\rho)+\frac{1}{2}\underline\rho\les \frac{3}{2}\overline\rho.
$$
The proof of (\ref{3.3}) is therefore completed.
\end{proof}

\subsection{Proof of Lemma \ref{lem3.1}} This subsection is devoted to the derivation of the global lower-order estimates of the solutions stated in (\ref{3.4}). To clarify the proof, we split it into several steps.

\vskip 2mm

\underline{\bf Step 1. The elementary  $L^2$-energy estimate}

\vskip 2mm

To begin, let
\begin{equation}\label{3.16}
E_0  \triangleq \int  \left( \frac{1}{2} \rho_0 |u_0|^2 + \frac{1}{2} \rho_0 |w_0|^2 + G(\rho_0) \right) \md x
\end{equation}
where  $G(\rho)$ is the potential energy given by
\begin{equation}\label{3.17}
G(\rho)\triangleq \rho\int_{\widetilde{\rho}}^\rho\frac{P(s)-P(\widetilde{\rho})}{s^2} \md s.
\end{equation}

It is easy to check that
\begin{equation}\label{3.18}
G(\widetilde{\rho})=G'(\widetilde{\rho})=0\quad{\text {and}}\quad
 \rho G'(\rho)=G(\rho)+P(\rho)-P(\widetilde{\rho}),
\end{equation}
which, together with (\ref{3.2}), implies that there exists a positive constant $C$, depending only on $\underline\rho$, $\overline\rho$ and $\widetilde\rho$, such that for any $0\les t\les T$,
\begin{equation}\label{3.19}
C^{-1}\|\rho-\widetilde\rho\|_{L^2}^2\les \|G(\rho)\|_{L^1}\les C\|\rho-\widetilde\rho \|_{L^2}^2.
\end{equation}

In view of (\ref{1.1})$_1$ and the identity,
$$
\nabla^\bot(\nabla^\bot\cdot u)=\Delta u-\nabla \divg u,
$$
 we can rewrite (\ref{1.1})$_2$ in the form ($ \nu=2\mu+\lambda$):
\begin{equation}
 \rho u_t+\rho u \cdot\nabla u+\nabla P(\rho )=(\mu+\zeta)\nabla^\bot(\nabla^\bot\cdot u)+
\nu\nabla\divg u-2\zeta\nabla^{\bot} w.\label{3.20}
\end{equation}

Multiplying (\ref{1.1})$_1$, (\ref{3.20}) and (\ref{1.1})$_3$ by $G'(\rho)$, $u$ and $w$, respectively, integrating by parts over $ \R^2$, and adding them together, by (\ref{3.18})$_2$ we obtain
\begin{equation}\label{3.21}
\begin{aligned}
&\frac{\md}{\md t}\left(\frac{1}{2}\|\sqrt\rho u\|_{L^2}^2+\frac{1}{2}\|\sqrt\rho w\|_{L^2}^2+\|G(\rho)\|_{L^1}\right)+ \nu\|\divg u\|_{L^2}^2\\
&\quad+\mu\|\nabla^\bot\cdot u\|_{L^2}^2+\zeta\|\nabla^\bot\cdot u-2w\|_{L^2}^2+\varepsilon\|\nabla w\|_{L^2}^2 =0,
\end{aligned}
\end{equation}
which, integrated over $(0,t)$ and combined  with (\ref{3.2}), yields
\begin{equation}
\begin{aligned}\label{3.22}
&\left(\| u\|_{L^2}^2+ \| w \|_{L^2}^2+\|G(\rho)\|_{L^1}\right)(t)+\int_0^t\left( \nu \|\divg u\|_{L^2}^2  +\mu\|\nabla^\bot\cdot u\|_{L^2}^2\right)\md s\\
&\qquad + \zeta\int_0^t\|\nabla^\bot \cdot u-2 w \|_{L^2}^2\md s +\varepsilon\int_0^t\|\nabla w\|_{L^2}^2 \md s\les C E_0,
\end{aligned}
\end{equation}
where $E_0$ and $G(\rho)$ are the ones defined in (\ref{3.16}) and (\ref{3.17}), respectively.

\vskip 2mm

\underline{\bf Step 2. A suitable $H^1$-energy functional}

\vskip 2mm

This step aims to derive the differential identity for the energy functional that contains the $H^1$-information of the velocity,
\begin{equation}\label{3.23}
\begin{aligned}
\mathcal{E}(t)&\triangleq \frac{1}{2}\left(\|\sqrt\rho u\|_{L^2}^2+ \|\sqrt\rho w\|_{L^2}^2\right)+\left(1- 2 \gamma P(\widetilde\rho) \nu^{-1} \right)\|G(\rho)\|_{L^1} \\
&\quad + \nu^{-1}\|F\|_{L^{2}}^{2}+\mu\|\nabla^\bot\cdot u\|_{L^{2}}^{2}+\zeta\|\nabla^\bot\cdot u-2w\|_{L^2}^2+\varepsilon\|\nabla w\|_{L^2}^2,
\end{aligned}
\end{equation}
where $F$ is the ``effective viscous flux" defined in (\ref{3.6})$_1$. For simplicity, we also introduce the following dissipation functional,
\begin{equation}\label{3.24}
\begin{aligned}
\mathcal{F}(t)&\triangleq \mu\|\nabla^\bot\cdot u\|_{L^2}^2+\nu\|\divg u\|_{L^2}^2+\zeta\|\nabla^\bot\cdot u-2w\|_{L^2}^2 \\
&\quad+2\left(\|\sqrt{\rho}\dot{u}\|_{L^{2}}^{2}+\|\sqrt\rho \dot w\|_{L^2}^2\right) +\varepsilon\|\nabla w\|_{L^2}^2.
\end{aligned}
\end{equation}

First,  it is easily derived from  (\ref{3.20}) that
\begin{equation}
\nabla F+(\mu+\zeta)\nabla^\bot(\nabla^\bot\cdot u)=\rho\dot u+2\zeta\nabla^\bot w,\label{3.25}
\end{equation}
where $\dot u\triangleq u_t+u\cdot\nabla u$. By direct calculation, we have
\begin{equation}
\nabla^\bot\cdot\dot u=\nabla^\bot\cdot(u_t+u\cdot\nabla u)= {\rm D}_t(\nabla^\bot \cdot u)+(\nabla^\bot\cdot u)\divg u\label{3.26}
\end{equation}
and
\begin{equation}\label{3.27}
\begin{aligned}
\divg \dot u&=\divg (u_t+u\cdot\nabla u) ={\rm D}_t(\divg u)+\partial_iu_j\partial_j u_i\\
&=\nu^{-1}{\rm D}_t \left( F+ P(\rho)-P(\widetilde\rho)\right) +2(\nabla u_1)\cdot (\nabla^\bot u_2)+(\divg u)^2.
\end{aligned}
\end{equation}

Next, multiplying (\ref{3.25}) by $2\dot u$ in $L^2$ and integrating by parts over $\R^2$, by (\ref{3.26}) and (\ref{3.27}) we deduce
\begin{equation}
\begin{aligned}\label{3.28}
 &\frac{\md}{\md t}\left( \nu^{-1}\|F\|_{L^{2}}^{2}+(\mu+\zeta) \|\nabla^\bot\cdot u\|_{L^{2}}^{2}\right)+2\|\sqrt{\rho}\dot{u}\|_{L^{2}}^{2} \\
&\quad  =-4\zeta\int\nabla^\bot w \cdot\dot{u}\mdx-(\mu+\zeta)\int (\nabla^\bot\cdot u)^{2} (\divg u) \mdx\\
&\qquad + \nu^{-1} \int F^{2} (\divg u) \mdx- 4 \int F (\nabla u_{1}) \cdot (\nabla^{\bot} u_{2}) \mdx
\\
&\qquad-2 \int F(\divg u)^{2} \mdx + 2 \gamma \nu^{-1}\int P(\rho) F (\divg u) \mdx,
\end{aligned}
\end{equation}
due to  (\ref{1.1})$_1$ and the following identity,
\begin{equation}
{\rm D}_t \left(P(\rho)-P(\widetilde\rho)\right)=P(\rho)_t+u\cdot\nabla P(\rho)=-\gamma P(\rho)\divg u.\label{3.29}
\end{equation}

Keeping in mind that  $\dot w \triangleq w_t+u\cdot\nabla w$, we have by integration by parts that
\begin{align*}
\int\nabla^\bot w \cdot\dot{u}\mdx & =  \frac{\md }{\md t}\int\nabla^\bot w \cdot u \mdx-\int \left( \nabla^\bot w_t \cdot u -\nabla^\bot w\cdot (u\cdot\nabla )u \right) \mdx\\
   & =  -\frac{\md }{\md t}\int w (\nabla^\bot \cdot u) \mdx+\int \left(  w_t +u\cdot\nabla w\right) (\nabla^\bot\cdot u) \mdx\\
   & =  -\frac{\md }{\md t}\int w (\nabla^\bot \cdot u) \mdx+\int \dot w (\nabla^\bot\cdot u)  \mdx,
  \end{align*}
which, together  with (\ref{1.1})$_3$, gives
\begin{equation}\label{3.30}
\begin{aligned}
 -4\zeta\int\nabla^\bot w \cdot\dot{u}\mdx
 &= 4\zeta\frac{\md }{\md t}\int w (\nabla^\bot \cdot u) \mdx-2 \int \dot w \left(\rho \dot w+4\zeta w-\varepsilon\Delta w\right)  \mdx\\
  & =\frac{\md }{\md t}\int\left( 4\zeta w (\nabla^\bot \cdot u) -4\zeta w^2-\varepsilon|\nabla w|^2\right)\mdx\\
  &\quad- \int \left(2\rho|\dot w|^2-4\zeta w^2\divg u-2\varepsilon u\cdot\nabla w\Delta w\right)\mdx.
 \end{aligned}
\end{equation}

It follows from  (\ref{1.1})$_1$ and (\ref{3.18})$_2$ that
$$
\partial_t G(\rho)+\divg (uG(\rho))=-\left(P(\rho)-P(\widetilde\rho)\right) \divg u,
$$
and hence, by (\ref{3.6})$_1$ we obtain
\begin{equation}\label {3.31}
\begin{aligned}
& 2\gamma\nu^{-1}\int P(\rho) F (\divg u)\mdx \\
&\quad = 2\gamma\nu^{-1}\int P(\rho) \left(\nu\divg u-(P(\rho)- P(\widetilde \rho)\right) (\divg u) \mdx\\
&\quad =2\gamma\nu^{-1} P(\widetilde\rho) \frac{\md}{\md t}\int G(\rho)\mdx+2\gamma \int  P( \rho)(\divg u)^2\mdx\\
&\qquad - 2\gamma\nu^{-1}\int  \left(P(\rho)- P(\widetilde \rho)\right)^2 (\divg u)\mdx.
\end{aligned}
\end{equation}

Thus, inserting  (\ref{3.30}) and (\ref{3.31}) into (\ref{3.28}) and adding it to (\ref{3.21}), by (\ref{3.23}) and (\ref{3.24}) we arrive at
\begin{equation}\label{3.32}
\begin{aligned}
 \mathcal{E}'(t)+\mathcal{F}(t)
 & = 2\varepsilon \int u\cdot\nabla w\Delta w \mdx+4\zeta\int  w^2(\divg u) \mdx  \\
&\quad -(\mu+\zeta)\int (\nabla^\bot\cdot u)^{2} (\divg u) \mdx + \nu^{-1} \int F^{2} (\divg u) \mdx\\
&\quad - 4 \int F (\nabla u_{1}) \cdot (\nabla^{\bot} u_{2}) \mdx
 -2 \int F(\divg u)^{2} \mdx \\
 &\quad
 - 2 \gamma  \nu^{-1}\int  \left(P(\rho)-P(\widetilde \rho) \right)^2 (\divg u)\mdx +2 \gamma \int  P( \rho)(\divg u)^2\mdx\\
 & \triangleq I_1+\ldots+I_8.
\end{aligned}
\end{equation}
Obviously, if $\nu>1$ is chosen to be large enough (e.g. $\nu\geq 4\gamma P(\widetilde \rho)$), then the functional $\mathcal{E}(t)$ is nonnegative and contains the information of $\|\nabla u\|_{L^2}$, due to Lemma \ref{lem2.3}, (\ref{3.2}), (\ref{3.19}) and (\ref{3.22}).

\vskip 2mm

\underline{\bf Step 3. The estimates of $I_1$--$I_8$}

\vskip 2mm

The right-hand side of (\ref{3.32}) will be estimated term by term. First,
based upon integration by parts, we have
\begin{equation}\label{3.33}
\begin{aligned}
I_1&=-2\varepsilon\int\left(\partial_i u_j\partial_j w\partial_i w+u_j\partial_{ij}^2w\partial_iw\right)\mdx\\
&=2\varepsilon\int \left(w\nabla\divg u \cdot \nabla w+ w\partial_i u_j \partial^2_{ij} w +\frac{1}{2} (\divg u) |\nabla w|^2 \right)\mdx\\
& =-\varepsilon\int (\divg u)|\nabla w|^2 \mdx-2\varepsilon\int w \left((\divg u)\Delta w -\partial_i u_j  \partial^2_{ij} w \right)\mdx\\
&=-\varepsilon \nu^{-1}\int \left(P(\rho)- P(\widetilde \rho)\right)|\nabla w|^2\mdx+ \varepsilon \nu^{-1}\int  w  \nabla F\cdot\nabla w \mdx\\
&\quad + \varepsilon \nu^{-1}\int  w  F\Delta w  \mdx
-2\varepsilon\int w \left((\divg u)\Delta w -\partial_i u_j  \partial^2_{ij} w \right)\mdx \\
&\triangleq I_1^1+\ldots+I_1^4,
\end{aligned}
\end{equation}
since it follows from (\ref{3.6})$_1$ that
\begin{align*}
&\varepsilon\int (\divg u)|\nabla w|^2 \mdx= \varepsilon \nu^{-1}\int \left(F+(P(\rho)- P(\widetilde \rho)\right)|\nabla w|^2\mdx \\
&\quad = \varepsilon \nu^{-1}\int \left(P(\rho)- P(\widetilde \rho)\right)|\nabla w|^2\mdx- \varepsilon \nu^{-1}\int w \left(\nabla F\cdot\nabla w + F\Delta w\right)\mdx.
\end{align*}

It is easily seen from (\ref{3.2})  that
$$
I_1^1\les C \varepsilon\nu^{-1}\|\nabla w\|_{L^2}^2.
$$

From (\ref{1.1})$_3$  and the $H^2$-regularity theory of elliptic system, by (\ref{3.2}) we get
\begin{equation}\label{3.34}
\varepsilon\|\nabla^2 w\|_{L^2}+\varepsilon^{\frac{1}{2}}\|\nabla w\|_{L^2}+\|w\|_{L^2} \leq C\left(\|\sqrt\rho\dot w\|_{L^2}+\|\nabla^\bot\cdot u\|_{L^2}\right),
\end{equation}
which, combined with (\ref{2.10}), (\ref{3.2}), (\ref{3.8})  and Cauchy-Schwarz inequality, gives
\begin{align*}
I_{1}^2
&\les  C \varepsilon \nu^{-1}\|w\|_{L^4} \|\nabla F\|_{L^2}\|\nabla w\|_{L^4} \\
&\les C \varepsilon \nu^{-1}\|w\|_{L^4} \|\nabla F\|_{L^2}\|\nabla w\|_{L^2}^{\frac{1}{2}}\|\nabla^2w\|_{L^2}^{\frac{1}{2}} \\
&\les
C   \varepsilon^\frac{1}{2}{\nu}^{-1}\|w\|_{L^4} \|\sqrt\rho \dot u\|_{L^2}\|\nabla w\|_{L^2}^{\frac{1}{2}}\left(  \|\sqrt\rho\dot w\|_{L^2}^{\frac{1}{2}} +\|\nabla^\bot\cdot u\|_{L^2}^\frac{1}{2}\right)
 \\
&\les
\frac{1}{16}\left(\|\sqrt\rho\dot u\|_{L^2}^2+\|\sqrt\rho\dot w\|_{L^2}^2\right)+C \left( \nu^{-4} \varepsilon^2\|\nabla w\|_{L^2}^2 \|w\|_{L^4}^4+ \|\nabla u\|_{L^2}^{2}\right).
\end{align*}
Analogously,
\begin{align*}
I_{1}^3
&\les  C\varepsilon \nu^{-1}\|w\|_{L^4} \|F\|_{L^4}\|\nabla^2w\|_{L^2} \\
&\les
 C \nu^{-1}\|w\|_{L^4}\|F\|_{L^2}^{\frac{1}{2}}\|\sqrt\rho\dot u\|_{L^2}^{\frac{1}{2}}   \left( \|\sqrt\rho\dot w\|_{L^2} + \|\nabla^\bot\cdot u\|_{L^2} \right)\\
&\les
\frac{1}{16}\left(\|\sqrt\rho\dot u\|_{L^2}^2+\|\sqrt\rho\dot w\|_{L^2}^2\right)  +C \left(\nu^{-4} \|F\|_{L^2}^2 \|w\|_{L^4}^4+  \|\nabla  u\|_{L^2}^{2}\right).
\end{align*}

To estimate $I_1^4$, we have to consider $\|\nabla u\|_{L^4}$. For this purpose,  let
\begin{equation}\label{3.35}
H\triangleq \nabla^\bot\cdot u-\frac{2\zeta}{\mu+\zeta} w.
\end{equation}
Then, applying $\nabla^\bot\cdot $ to both sides of (\ref{3.20}), we find
\begin{equation}\label{3.36}
(\mu+\zeta)\Delta H=\nabla^\bot\cdot (\rho\dot u).
\end{equation}
Hence, similarly to that in (\ref{3.8}), by (\ref{3.2}) we have from (\ref{3.36}) that
\begin{equation}
\|\nabla H\|_{L^p}\les C \|\sqrt\rho\dot u\|_{L^p},\quad \forall\ 1<p<\infty.\label{3.37}
\end{equation}

In terms of (\ref{3.6})$_1$ and (\ref{3.35}), we infer from  (\ref{2.10}) that
\begin{align*}
\|\nabla u\|_{L^4}&\les  C\left(\|\divg u\|_{L^4}+\|\nabla^\bot\cdot u\|_{L^4}\right)\\
&\les  C\nu^{-1}\left(\|F\|_{L^4}+\|P(\rho)- P(\widetilde \rho) \|_{L^4}\right)+ C\left(\|H\|_{L^4}+\|w\|_{L^4}\right)\\
&\les  C\left(\nu^{-1} \|F\|_{L^2}^{\frac{1}{2}}+\|H\|_{L^2}^{\frac{1}{2}}\right)\left(\|\nabla F\|_{L^2}^\frac{1}{2}+\|\nabla H\|_{L^2}^{\frac{1}{2}}\right)\\
&\quad +C\left(\nu^{-1}\|P(\rho)-P(\widetilde \rho) \|_{L^4} +\|w\|_{L^4}\right),
\end{align*}
which, combined with   (\ref{3.8})  and (\ref{3.37}), yields
\begin{equation}\label{3.38}
\begin{aligned}
\|\nabla u\|_{L^4} &\les C \left(\nu^{-1}\|F\|_{L^2}^\frac{1}{2}+\|H\|_{L^2}^{\frac{1}{2}}\right)\|\sqrt\rho\dot u\|_{L^2}^{\frac{1}{2}}\\
&\quad +C\left(\nu^{-1}\|P(\rho)- P(\widetilde \rho )\|_{L^4}+\|w\|_{L^4}\right).
\end{aligned}
\end{equation}

By virtue of (\ref{3.34}), (\ref{3.38}) and Cauchy-Schwarz inequality, we deduce
\begin{align*}
I_1^4&\les  C\varepsilon\|w\|_{L^4}\|\nabla u\|_{L^4}\|\nabla^2 w\|_{L^2}\\
&\les  C  \|w\|_{L^4}\left(\nu^{-1}\|F\|_{L^2}^\frac{1}{2}+\|H\|_{L^2}^{\frac{1}{2}}\right)\|\sqrt\rho\dot u\|_{L^2}^{\frac{1}{2}} \left( \|\sqrt\rho\dot w\|_{L^2} +\|\nabla^\bot\cdot u\|_{L^2} \right)\\
&\quad +C \|w\|_{L^4}\left(\nu^{-1}\|P(\rho)-P(\widetilde \rho) \|_{L^4}+\|w\|_{L^4}\right) \left(  \|\sqrt\rho\dot w\|_{L^2} +\|\nabla^\bot\cdot u\|_{L^2} \right)\\
&\les  \frac{1}{16}\left(\|\sqrt\rho\dot u\|_{L^2}^2+\|\sqrt\rho\dot w\|_{L^2}^2\right)+C  \left(\nu^{-4}\|F\|_{L^2}^2+\|H\|_{L^2}^2\right)\|w\|_{L^4}^4\\
&\quad+C \left( \nu^{-4}\|P(\rho)- P(\widetilde \rho) \|_{L^4}^4+\|w\|_{L^4}^4 + \|\nabla  u\|_{L^2}^2\right).
\end{align*}

Thus, substituting the estimates of $I_{1}^1$, $I_{1}^2$, $I_{1}^3$ and $I_1^4$ into (\ref{3.33}) shows
\begin{equation}\label{3.39}
\begin{aligned}
I_{1}&\les \frac{1}{4}\left(\|\sqrt\rho\dot u\|_{L^2}^2+\|\sqrt\rho\dot w\|_{L^2}^2\right)+C \left(\varepsilon\|\nabla w\|_{L^2}^2  + \|\nabla  u\|_{L^2}^2\right)  \\
&\quad +C \left(\nu^{-4}\|F\|_{L^2}^2+\|H\|_{L^2}^2+\varepsilon^2\|\nabla w\|_{L^2}^2\right)\|w\|_{L^4}^4\\
&\quad +C \left(\|w\|_{L^4}^4+ \nu^{-4}\|P(\rho)-P(\widetilde\rho)\|_{L^4}^4\right) .
\end{aligned}
\end{equation}

Using (\ref{2.10}), (\ref{3.2}), (\ref{3.8}) and (\ref{3.37}), we can bound  $I_3$ and $I_4$ by
\begin{equation}\label{3.40}
\begin{aligned}
I_3+I_4&\les  C\left(\|\nabla^\bot \cdot u\|_{L^4}^2+\nu^{-1}\|F\|_{L^4}^2\right)\|\divg u\|_{L^2}\\
&\les  C\left(\|H\|_{L^4}^2+\|w\|_{L^4}^2 +\nu^{-1}\|F\|_{L^4}^2\right)\|\divg u\|_{L^2}\\
&\les  C \left[ \left(\|H\|_{L^2}+\nu^{-1}\|F\|_{L^2}\right)\|\sqrt\rho\dot u\|_{L^2}+\|w\|_{L^4}^2\right] \|\divg u\|_{L^2} \\
&\les  \frac{1}{4}\|\sqrt\rho\dot u\|_{L^2}^2 +C \left(\nu^{-2}\|F\|_{L^2}^2+\|H\|_{L^2}^2\right)\|\divg u\|_{L^2}^2 \\
&\quad   +C \left(\|w\|_{L^4}^4+\|\divg u\|_{L^2}^2\right).
\end{aligned}
\end{equation}

Since $\nabla$ and $\nabla^\bot$ are orthogonal, it holds that $\nabla^\bot\cdot(\nabla u_1)=0$ and $\nabla\cdot(\nabla^\bot u_2)=0$. Hence, using the duality of ${\rm BMO}$ and $\mathcal{H}^1$ and the fact that  $\|F\|_{{\rm BMO}}\leq C\|\nabla F\|_{L^2}$,  we infer from (\ref{2.14}), (\ref{3.2}) and (\ref{3.8})  that
\begin{equation}\label{3.41}
\begin{aligned}
I_5&\les  C\|F\|_{{\rm BMO}}\|(\nabla u_1)\cdot(\nabla^\bot u_2)\|_{\mathcal{H}^1}\leq C\|\nabla F\|_{L^2}\|\nabla u\|_{L^2}^2\\
&\les  C \|\sqrt\rho\dot u\|_{L^2}\|\nabla u\|_{L^2}^2\les  \frac{1}{4}\|\sqrt\rho\dot u\|_{L^2}^2+C \|\nabla u\|_{L^2}^4.
\end{aligned}
\end{equation}

Replacing $\divg u=\nu^{-1}(F+P(\rho)-P(\widetilde\rho))$ again, using (\ref{2.10}), (\ref{3.8}) and Cauchy-Schwarz inequality, we deduce
\begin{equation}\label{3.42}
\begin{aligned}
 I_6+I_7 &\les C\nu^{-1}\left(\|F\|_{L^4}^2+\|P(\rho)-P(\widetilde\rho)\|_{L^4}^2\right)\|\divg u\|_{L^2} \\
&  \les C\nu^{-1}\left(\|F\|_{L^2} \|\nabla F\|_{L^2}   +\|P(\rho)-P(\widetilde\rho)\|_{L^4}^2\right)\|\divg u\|_{L^2}\\
&  \les C\nu^{-1}\left(\|F\|_{L^2}  \|\sqrt\rho\dot u\|_{L^2} +\|P(\rho)-P(\widetilde\rho)\|_{L^4}^2\right)\|\divg u\|_{L^2}\\
& \les \frac{1}{4}\|\sqrt\rho\dot u\|_{L^2}  +C \nu^{-2} \|F\|_{L^2}^2 \|\divg u\|_{L^2}^2 \\
&\quad + C \nu^{-2}\|P(\rho)- P(\widetilde \rho )\|_{L^4}^4   +C \|\divg u\|_{L^2}^2,
\end{aligned}
\end{equation}
and finally, by (\ref{3.2}) we easily obtain
\begin{equation}\label{3.43}
I_2+I_8\les
  C \left(\|w\|_{L^4}^4+ \|\divg u\|_{L^2}^2\right).
\end{equation}

Now, substituting (\ref{3.39}), (\ref{3.40}), (\ref{3.41}), (\ref{3.42}) and (\ref{3.43}) into (\ref{3.32}),  we see that for any $\nu\ges1$,
\begin{equation}\label{3.44}
\begin{aligned}
\mathcal{E}'(t)+\frac{1}{2}\mathcal{F}(t)&\les C \left(\nu^{-2}\|P(\rho)-P(\widetilde \rho) \|_{L^4}^4 + \|w\|_{L^4}^4\right)\\
&\quad +C  \left(\nu^{-4}\|F\|_{L^2}^2+\|H\|_{L^2}^2+\varepsilon^2\|\nabla w\|_{L^2}^2\right)\|w\|_{L^4}^4\\
&\quad +C \left(\nu^{-2}\|F\|_{L^2}^2+\|H\|_{L^2}^2\right)\|{\rm div} u\|_{L^2}^2\\
&\quad +C  \left( \varepsilon\|\nabla w\|_{L^2}^2  +\|\nabla  u\|_{L^2}^2  + \|\nabla u\|_{L^2}^4\right).
\end{aligned}
\end{equation}

From (\ref{3.2}), (\ref{3.19})  and (\ref{3.22}), it is easily seen that there exists a positive constant $C$, depending only on $\gamma$, $\widetilde\rho$, $\underline\rho$ and $\overline\rho$, such that for any $0\les t\les T$,
$$
\|P(\rho)-P(\widetilde \rho) \|_{L^2}^2 \les C\|\rho-\widetilde\rho\|_{L^2}^2\les C  \|G(\rho)\|_{L^1}\leq C.
$$
This, together with (\ref{3.6})$_1$ and Lemma \ref{lem2.3}, gives
\begin{equation}\label{3.45}
\nu^{-1}\|F\|_{L^2}^2\les \nu\|\divg u\|_{L^2}^2+\nu^{-1}\|P(\rho)-P(\widetilde \rho) \|_{L^2}^2\les \nu\|\divg u\|_{L^2}^2+C
\end{equation}
and
\begin{align*}
\|\nabla u\|_{L^2}^2&\les C\left(\|\nabla^\bot\cdot u\|_{L^2}^2+\|\divg u\|_{L^2}^2\right)\\
&\les C\left(\|\nabla^\bot\cdot u\|_{L^2}^2+\nu^{-2}\|F\|_{L^2}^2+\nu^{-2}\|P(\rho)-P(\widetilde \rho) \|_{L^2}^2\right)\\
&\les C \left(\|\nabla^\bot\cdot u\|_{L^2}^2+\nu^{-2}\|F\|_{L^2}^2+1\right),
\end{align*}
from which it follows that 
\begin{equation}\label{3.46}
\|\nabla u\|_{L^2}^4\les C \left(\|\nabla^\bot\cdot u\|_{L^2}^2+\nu^{-2}\|F\|_{L^2}^2\right)\|\nabla u\|_{L^2}^2+C \|\nabla u\|_{L^2}^2.
\end{equation}
Moreover, recalling the definition of $H$ in (\ref{3.35}), by (\ref{3.22})  we have
\begin{equation}\label{3.47}
\|H\|_{L^2}\les C \left( \|\nabla^\bot\cdot u\|_{L^2}+\|w\|_{L^2}\right)\les  C  \|\nabla^\bot\cdot u\|_{L^2}+C.
\end{equation}

Thus, substituting (\ref{3.45}), (\ref{3.46}) and (\ref{3.47}) into (\ref{3.44}) yields
\begin{equation}\label{3.48}
\begin{aligned}
\mathcal{E}'(t)+\frac{1}{2}\mathcal{F}(t)&\les  C_1  \left(\nu^{-2}\|P(\rho)-P(\widetilde \rho) \|_{L^4}^4 + \|w\|_{L^4}^4\right)\\
&\quad +C   \left(\|\nabla u\|_{L^2}^2 +\varepsilon^2\|\nabla w\|_{L^2}^2\right)\|w\|_{L^4}^4\\
&\quad  +C \left(\|\nabla^\bot\cdot u\|_{L^2}^2+\nu^{-2}\|F\|_{L^2}^2\right)\|\nabla u\|_{L^2}^2\\
&\quad +C  \left( \varepsilon\|\nabla w\|_{L^2}^2  +\|\nabla  u\|_{L^2}^2 \right)
\end{aligned}
\end{equation}
for the positive constants $C_1$ and $C$ depending on $\widetilde\rho$, $\underline\rho$ and $\overline\rho$.

\vskip 2mm

{\underline{\bf Step 4. The dissipations of $\|P(\rho)-P(\widetilde\rho)\|_{L^4}$ and $\|w\|_{L^4}$}}

\vskip 2mm

Obviously, we still need to deal with $\|P(\rho)-P(\widetilde\rho)\|_{L^4}$ and $\|w\|_{L^4}$. First, multiplying (\ref{3.29}) by $3(P(\rho) -P(\widetilde \rho))^2$ and integrating by parts, by (\ref{3.6})$_1$ we obtain
\begin{equation}\label{3.49}
\begin{aligned}
\frac{\md}{\md t} \int\left(P(\rho) -  P(\widetilde \rho)\right)^3 \mdx
& =\int\left(P(\rho) -P( \widetilde \rho) \right )^3  (\divg u) \mdx\\
&\quad   - 3\gamma  \int P(\rho)\left(P(\rho) -  P(\widetilde \rho)\right)^2 (\divg u) \mdx\\
&=\nu^{-1}\int \left|P(\rho)-P(\widetilde\rho)\right|^4 \mdx + {\text {Rem.}},
\end{aligned}
\end{equation}
where the remainder term ${\text{Rem}}$ is given by
$$
{\text{Rem.}}\triangleq \nu^{-1}\int \left( P(\rho)-P(\widetilde\rho)\right)^3 F \mdx  -3\gamma  \int P(\rho)\left(P(\rho) -  P(\widetilde \rho)\right)^2 (\divg u) \mdx.
$$

In view of (\ref{3.18}) and (\ref{3.19}), we obtain by (\ref{3.2}) and (\ref{3.22}) that
$$
\|P(\rho)-P(\widetilde \rho)\|_{L^2}^2+\|P(\rho)-P(\widetilde \rho) \|_{L^4}^4\les C \|G(\rho)\|_{L^1}\les C.
$$
Thus, using (\ref{2.10}), (\ref{3.6})$_1$, (\ref{3.8}) and Cauchy-Schwarz inequality, we deduce
\begin{align*}
|{\text{Rem.}}|&\les C\nu^{-1} \| P(\rho)-P(\widetilde\rho)\|_{L^4}^3\|F\|_{L^4}   + C\| P(\rho) -  P(\widetilde \rho)\|_{L^4}^2 \|\divg u\|_{L^2} \\
&\les C \nu^{-1} \|P(\rho) -P(\widetilde\rho)\|_{L^4}^3\|F\|_{L^2}^\frac{1}{2}\|\nabla F\|_{L^2}^\frac{1}{2}+ C\| P(\rho) -  P(\widetilde \rho)\|_{L^4}^2 \|\divg u\|_{L^2}\\
&\les C\nu^{-1} \|P(\rho) -P(\widetilde\rho)\|_{L^4}^3\left(\nu^{\frac{1}{2}}\|\divg u\|_{L^2}^{\frac{1}{2}}+\|P(\rho)-P(\widetilde\rho)\|_{L^2}^{\frac{1}{2}}\right) \|\sqrt{\rho}\dot{u}\|_{L^2}^{\frac{1}{2}}\\
&\quad+ C\| P(\rho) -  P(\widetilde \rho)\|_{L^4}^2 \|\divg u\|_{L^2}\\
&\les C \nu^{-\frac{1}{2}} \|P(\rho) - P(\widetilde \rho)\|_{L^4}^2\left(\|\divg u\|_{L^2}^{\frac{1}{2}}\|\sqrt{\rho}\dot{u}\|_{L^2}^{\frac{1}{2}}+\nu^{\frac{1}{2}}  \|\divg u\|_{L^2}\right)\\
&\quad+ C  \nu^{-1} \|P(\rho) - P(\widetilde \rho)\|_{L^4}^3 \|\sqrt{\rho}\dot{u}\|_{L^2}^{\frac{1}{2}}\\
&\les \frac{1}{16}\|\sqrt\rho\dot u\|_{L^2}^2+ \left((4\nu)^{-1}+C_2 \nu^{-\frac{4}{3}}\right)\|P(\rho) -P(\widetilde \rho) \|_{L^4}^4+C \nu \|\divg u\|_{L^2}^2.
\end{align*}

So, if $\nu\ges 1$ is chosen to be large enough such that
$$
\nu\ges \nu_{1,1}\triangleq\max\left\{1, (4C_2)^{3} \right\},
$$
then we derive from (\ref{3.49}) that
\begin{equation}\label{3.50}
\begin{aligned}
\nu^{-1}\|P(\rho) - P(\widetilde \rho)\|_{L^4}^4&\les 2 \frac{\md}{\md t} \int\left(P(\rho) -  P(\widetilde \rho) \right)^3 \mdx \\
&\quad +\frac{1}{8}\|\sqrt\rho\dot u\|_{L^2}^2+C \nu \|\divg u\|_{L^2}^2.
\end{aligned}
\end{equation}

In order to estimate $\|w\|_{L^4}$, we first use (\ref{3.35}) to rewrite   \eqref{1.1}$_3$ as
$$
\rho w_t+\rho u\cdot\nabla w+\frac{4\mu\zeta}{\mu+\zeta}  w-\varepsilon\Delta w=2\zeta \left(\nabla^\bot\cdot u-\frac{2\zeta}{\mu+\zeta}w\right)= 2\zeta H,
$$
which, multiplied  by $4|w|^2w$ and integrated by parts over $\mathbb{R}^2$, shows that for any $0<\delta<1$ (to be chosen later),
\begin{align*}
&\frac{\md }{\md t}\int \rho|w|^4\mdx+\frac{16\mu\zeta}{\mu+\zeta} \int |w|^4\mdx+12\varepsilon\int |w|^2 |\nabla w|^2\mdx\\
 &\quad\les C\|H\|_{L^4}\|w\|_{L^4}^3\les C\|H\|_{L^2}^{\frac{1}{2}}\|\nabla H\|_{L^2}^{\frac{1}{2}}\|w\|_{L^4}^3\\
 &\quad
 \les   \delta \|\sqrt\rho\dot u\|_{L^2}^2+\frac{8\mu\zeta}{\mu+\zeta}\|w\|_{L^4}^4+C(\delta )\|H\|_{L^2}^2\|w\|_{L^4}^4,
\end{align*}
where we have also used (\ref{2.10}) and (\ref{3.37}). This, together with (\ref{3.47})$_1$, gives
\begin{equation}\label{3.51}
\begin{aligned}
&\frac{\md }{\md t}\int \rho|w|^4\mdx+\frac{8\mu\zeta}{\mu+\zeta} \|w\|_{L^4}^4+12\varepsilon\||w||\nabla w|\|_{L^2}^2\\
&\quad \les \delta \|\sqrt\rho\dot u\|_{L^2}^2+C(\delta )\left(\|\nabla^\bot\cdot u\|_{L^2}^2+\|w\|_{L^2}^2\right)\|w\|_{L^4}^4.
\end{aligned}
\end{equation}

\vskip 2mm

{\underline{\bf Step 5. Concluding the proof}}

\vskip 2mm

To end the proof of (\ref{3.4}), we first choose $\nu$ to be large enough such that
$$
\nu\ges \nu_1\triangleq \max\left\{\nu_{1,1}, 2C_1 \right\},
$$
where $C_1$ is the positive constant in (\ref{3.48}). So, it follows from (\ref{3.48}) and (\ref{3.50}) that
\begin{equation}\label{3.52}
\begin{aligned}
&\mathcal{E}'(t)+\frac{1}{4}\mathcal{F}(t)+\frac{1}{2\nu}\|P(\rho)-P(\widetilde \rho) \|_{L^4}^4\\
&\quad \les 2 \frac{\md}{\md t} \int\left(P(\rho) -  P(\widetilde \rho) \right)^3 \mdx + C_1  \|w\|_{L^4}^4 \\
&\qquad+ C\left(\|\nabla u\|_{L^2}^2 +\varepsilon^2\|\nabla w\|_{L^2}^2\right)\|w\|_{L^4}^4\\
&\qquad  +C\left(\|\nabla^\bot\cdot u\|_{L^2}^2+\nu^{-2}\|F\|_{L^2}^2 \right) \|\nabla u\|_{L^2}^2 \\
&\qquad +C  \left( \varepsilon\|\nabla w\|_{L^2}^2  +\|\nabla  u\|_{L^2}^2 +\nu\|\divg u\|_{L^2}^2\right).
\end{aligned}
\end{equation}

Next, multiplying  (\ref{3.51}) by a suitably large number $C_3\triangleq \max\{1, C_1(\mu+\zeta)/(\mu\zeta)\}$ and taking $\delta= (8C_3)^{-1}$ sufficiently small, we obtain after adding (\ref{3.51}) and (\ref{3.52}) together that
\begin{equation}\label{3.53}
\begin{aligned}
&\widetilde{\mathcal{E}}'(t)+\widetilde{\mathcal{F}} (t)-2 \frac{\md}{\md t} \int\left(P(\rho) -  P(\widetilde \rho) \right)^3 \mdx\\
&\quad \les C  \left( \varepsilon\|\nabla w\|_{L^2}^2  +\|\nabla  u\|_{L^2}^2 +\nu\|\divg u\|_{L^2}^2\right) \\
&\qquad  +C \left(\|\nabla u\|_{L^2}^2 +\|w\|_{L^2}^2+\varepsilon^2\|\nabla w\|_{L^2}^2\right) \widetilde{\mathcal{E}}(t),
\end{aligned}
\end{equation}
where $\widetilde{\mathcal{E}}(t)$ and $\widetilde{\mathcal{F}}(t)$ are defined respectively by
$$
\widetilde{\mathcal{E}}(t)\triangleq \mathcal{E}(t)+C_3\int \rho|w|^4\mdx
$$
and
$$
\widetilde{\mathcal{F}}(t)\triangleq \frac{1}{8}\mathcal{F}(t) +\frac{1}{2\nu}\|P(\rho) -  P(\widetilde \rho)  \|_{L^4}^4+  \|w\|_{L^4}^4+\varepsilon\||w||\nabla w|\|_{L^2}^2.
$$

It readily follows from (\ref{3.22}) that
$$
\varepsilon\|\nabla w\|_{L^2}^2 +\|w\|_{L^2}^2  +\|\nabla  u\|_{L^2}^2 +\nu\|\divg u\|_{L^2}^2\in L^1(0,T),
$$
so that, we conclude from (\ref{3.53}) and Gronwall inequality that for any $0\les t\les T$,
\begin{equation}\label{3.54}
 \widetilde{\mathcal{E}}(t)+\int_0^t\widetilde{\mathcal{F}}(s)\md s\les C(\underline\rho,\overline\rho,K)
\end{equation}
for some positive constant $C$ depending on $\underline\rho$, $\overline\rho$, $\widetilde\rho$ and $ K$. Here, we have used (\ref{3.2}) and (\ref{3.22}) to get that
$$
\|P(\rho) - P( \widetilde \rho) \|_{L^3}^3\les C(\widetilde\rho,\underline\rho,\overline\rho)\|G(\rho)\|_{L^1}\les C(\widetilde\rho,\underline\rho,\overline\rho).
$$

By the definitions of $F$, $\mathcal{E}(t)$ and $\widetilde{\mathcal{E}}(t)$, we derive from (\ref{3.22}) and (\ref{3.54})  that
$$
\nu\|\divg u\|_{L^2}^2\les C\left(\nu^{-1}\|F\|_{L^2}^2+\|P(\rho)-P(\widetilde\rho)\|_{L^2}^2\right)\les C,\quad\forall\ 0\les t\les T,
$$
and moreover, by (\ref{3.34}) and (\ref{3.38})  we know
$$
\int_0^T\left(\|\nabla u\|_{L^4}^4+\varepsilon^2\|\nabla^2w\|_{L^2}^2\right)\md t\les C.
$$
Therefore, combining these estimates  with (\ref{3.2}), (\ref{3.22}) and (\ref{3.54}) leads to the desired estimates stated in (\ref{3.4}).

\subsection{Proof of Lemma \ref{lem3.2}} In this subsection, we aim to prove the global $t$-weighted estimates of the solutions. Analogously, since the calculations are lengthy, the derivation of (\ref{3.5}) is divided  into four steps.

\vskip 2mm

\underline{\bf Step 1. Building the energy functional}

\vskip 2mm

We begin with the setting-up of the energy-type functional. Applying $\partial_t+\divg(u\cdot  )$ to both sides of (\ref{3.20}) and multiplying it by $\dot u$ in $L^2$, we obtain
\begin{equation}
\begin{aligned}\label{3.55}
\frac{1}{2}\frac{\md}{\md t}\|\sqrt\rho \dot{u}\|_{L^2}^2
&=(\mu+\zeta)\int\dot{u}\cdot\left(\nabla^\bot(\nabla^\bot\cdot u_t)
+\partial_i\left(u_i\left(\nabla^\bot(\nabla^\bot\cdot u)\right)\right)\right)\mdx\\
&\quad-2\zeta\int\dot{u}\cdot\left(\nabla^\bot w _t+\partial_i
\left(u_i(\nabla^\bot w)\right )\right)\mdx\\
&\quad +\nu\int\dot{u}\cdot\left(\nabla \divg u_t+\partial_i
(u_i\nabla\divg  u)\right)\mdx\\
&\quad-\int\dot{u}\cdot\left(\nabla P_t+\partial_i
(u_i\nabla P)\right)\mdx\triangleq J_1+\ldots+J_4.
\end{aligned}
\end{equation}

By the definition of the material derivative $\dot f=f_t+u\cdot\nabla f$, we have from direct calculations that
\begin{align*}
&\nabla^\bot(\nabla^\bot\cdot u_t)
+\partial_i\left(u_i\left(\nabla^\bot(\nabla^\bot\cdot u)\right)\right)\\
&\quad=\nabla^\bot(\nabla^\bot\cdot \dot u)+\left(\divg  ((\nabla^\bot \cdot u)\partial_2 u ), -\divg  ((\nabla^\bot\cdot u)\partial_1u )\right),
\end{align*}
and hence, integrating by parts gives
\begin{equation}\label{3.56}
J_1= -(\mu+\zeta)\|\nabla^\bot\cdot \dot u\|_{L^2}^2-(\mu+\zeta)\int (\nabla^\bot \cdot u)\left( \partial_i\dot u_1\partial_2u_i-\partial_i\dot u_2\partial_1u_i\right)\mdx.
\end{equation}

Analogously, noting that
$$
\nabla^\bot w _t+\partial_i
\left(u_i(\nabla^\bot w )\right)
=\nabla^\bot\dot w+ (\nabla^\bot w\cdot \nabla ) u,
$$
we find
\begin{equation}
\begin{aligned}\label{3.57}
J_2&=2\zeta\int\dot w\nabla^\bot\cdot \dot{u}\mdx -2\zeta \int\nabla^\bot w\cdot \nabla  u\cdot \dot u \mdx \\
&=2\zeta\int \dot w\nabla^\bot\cdot \dot{u}\mdx -2\zeta \int w\left (\partial_1u\cdot\partial_2\dot{u} -\partial_2 u\cdot \partial_1 \dot{u} \right)\mdx.
\end{aligned}
\end{equation}

Recalling that $V=\divg u$ and $\dot V=V_t+u\cdot\nabla V$,
we have
\begin{align*}
\nabla \divg u_t+\partial_i
(u_i\nabla\divg  u)&=\nabla (V_t+u\cdot\nabla V)-\nabla (u\cdot\nabla \divg u)+\partial_i (u_i\nabla \divg u)\\
&=\nabla\dot V-\nabla u_i\partial_i \divg u+(\divg u)\nabla \divg u\\
&=\nabla\dot V-\partial_i ((\divg u) \nabla u_i)+2(\divg u)\nabla \divg u,
\end{align*}
and by (\ref{3.29}) we get
\begin{align*}
\nabla P_t+\partial_i (u_i\nabla P)&=-\nabla (\gamma P\divg u)-\nabla  u_i\partial_i P+(\divg u)\nabla P\\
&=-\nabla (\gamma P\divg u)-\partial_i(P\nabla  u_i)+(\divg u)\nabla P+P\nabla \divg u.
\end{align*}
Hence, the third and fourth terms on the right-hand side of (\ref{3.55}) can be written as
\begin{equation}\label{3.58}
\begin{aligned}
J_3+J_4&=-\int \divg \dot u\left(\nu \dot V+\gamma P\divg u\right)\md x+ \int (\nu\divg u-P)  \partial_i \dot u \cdot \nabla u_i \md x\\
&\quad +\int (\divg u) \dot u\cdot \nabla (\nu\divg u- P) \md x+\int  (\nu\divg u-P) \dot u\cdot \nabla \divg u \md x\\
&=-\int (\divg \dot u)\left(\nu \dot V+\gamma P\divg u\right)\md x+ \int (\nu\divg u-P)  \partial_i \dot u \cdot  \nabla u_i \md x\\
&\quad -\int (\divg u)(\nu\divg u- P)(\divg\dot u) \md x\\
&=-\nu\int |\dot V|^2\md x - \nu \int\dot V \partial_iu\cdot\nabla u_i\md x- \gamma \int P (\divg \dot u)(\divg u) \mdx\\
&\quad+ \int  (\nu\divg u-P)  \partial_i \dot u \cdot  \nabla u_i \md x -\int (\divg u) (\nu\divg u- P)\dot V \md x\\
&\quad-\int (\divg u)(\nu\divg u- P) \partial_i u\cdot\nabla u_i \md x,
\end{aligned}
\end{equation}
since it is easily checked that
\begin{equation}
\divg \dot u= \dot V+\divg (u\cdot\nabla u)-u\cdot\nabla\divg u=\dot V+\partial_i u\cdot\nabla u_i.\label{3.59}
\end{equation}

Thus, plugging (\ref{3.56}), (\ref{3.57}) and  (\ref{3.58}) into (\ref{3.55}), we arrive at
\begin{equation}
\begin{aligned}\label{3.60}
&\frac{1}{2}\frac{\md}{\md t}\|\sqrt\rho \dot{u}\|_{L^2}^2+\left((\mu+\zeta)\|\nabla^\bot\cdot \dot u\|_{L^2}^2+\nu\| \dot V\|_{L^2}^2\right)\\
&\quad=-(\mu+\zeta)\int (\nabla^\bot \cdot u)\left( \partial_i\dot u_1\partial_2u_i-\partial_i\dot u_2\partial_1u_i\right)\mdx +2\zeta\int \dot w\nabla^\bot\cdot \dot{u}\mdx
\\
&\qquad -2\zeta \int w\left (\partial_1u\cdot\partial_2\dot{u} -\partial_2 u\cdot \partial_1 \dot{u} \right)\mdx- \gamma\int P (\divg \dot u)(\divg u) \md x\\
&\qquad + \int  (\nu\divg u-P) \partial_i \dot u \cdot   \nabla u_i \md x -\int (\divg u)(\nu\divg u- P)\dot V \md x\\
&\qquad -\int (\divg u)(\nu\divg u- P) \partial_i u\cdot\nabla u_i \md x- \nu \int\dot V \partial_iu\cdot\nabla u_i\md x  \\
&\quad\triangleq N_1+\ldots+N_8.
\end{aligned}
\end{equation}

\vskip 2mm

\underline{\bf Step 2. The estimates of $N_1$--$N_7$}

\vskip 2mm

To begin, we first observe that (\ref{3.12}) and (\ref{3.13}) hold for any $\nu\ges \nu_1$, due to Lemma \ref{lem3.1}. Then, by (\ref{3.13}) we infer from (\ref{2.12}) that
\begin{equation}\label{3.61}
\begin{aligned}
N_1 &\les  C\|\nabla u\|_{L^4}^2\|\nabla \dot u\|_{L^2}\les  C\|\nabla u\|_{L^4}^2\left(\|\nabla^\bot\cdot \dot u\|_{L^2}+\|\divg \dot u\|_{L^2}\right)\\
&\les  C\|\nabla u\|_{L^4}^2\left(\|\nabla^\bot\cdot \dot u\|_{L^2}+\|\dot V\|_{L^2}+\|\nabla u\|_{L^4}^2\right)\\
&\les   \frac{1}{32}\left(\mu\|\nabla^\bot\cdot\dot u\|_{L^2}^2+ \nu \|\dot V\|_{L^2}^2\right)+C \|\nabla u\|_{L^4}^4,
\end{aligned}
\end{equation}
and analogously,
\begin{equation}\label{3.62}
\begin{aligned}
N_2+N_3+N_4 &\les  C\left(\|\dot{w}\|_{L^2}+\|w\|_{L^4}\|\nabla u\|_{L^4}+\|\divg u\|_{L^2}\right)\|\nabla \dot u\|_{L^2}\\
& \les  \frac{1}{32}\left(\mu\|\nabla^\bot\cdot\dot u\|_{L^2}^2+\nu\|\dot V\|_{L^2}^2\right) \\
&\quad  +C\left(\|\nabla u\|_{L^2}^2+\| \dot w\|_{L^2}^2+\|w\|^4_{L^4}+\|\nabla u\|_{L^4}^4 \right).
\end{aligned}
\end{equation}

Replacing  $\nu \divg u$ by the ``effective viscous flux" (i.e., $\nu\divg u=F+P(\rho)-P(\widetilde\rho)$),  we obtain after integrating by parts that
\begin{equation} \label{3.63}
\begin{aligned}
N_5&= \int  \left(F-P(\widetilde\rho) \right) \partial_i \dot u \cdot   \nabla u_i \md x\\
&= -\int   \dot u \cdot   \nabla u_i \partial_i F \md x-\int F\dot u\cdot \nabla \divg u \mdx-\int  P(\widetilde \rho) \partial_i \dot u \cdot   \nabla u_i  \md x\\
&= -\int   \dot u \cdot   \nabla u_i \partial_i F \md x-\frac{1}{\nu}\int F\dot u\cdot \nabla \left(F+P(\rho)-P(\widetilde \rho)\right)  \mdx \\
&\quad -\int  P(\widetilde \rho) \partial_i \dot u \cdot   \nabla u_i  \md x\\
&= -\int   \dot u \cdot   \nabla u_i \partial_i F \md x+\frac{1}{2\nu}\int F^2\divg\dot u\mdx +\frac{1}{\nu}\int F \divg\dot u   \left(P(\rho)-P(\widetilde \rho)\right) \mdx \\
&\quad +\frac{1}{\nu}\int \left(P(\rho)-P(\widetilde \rho)\right)\dot u \cdot \nabla F \mdx -\int  P(\widetilde\rho) \partial_i \dot u \cdot   \nabla u_i  \md x 
\triangleq \sum_{i=1}^5N_5^i .
\end{aligned}
\end{equation}

The first term on the right-hand side of (\ref{3.63}) can be bounded as follows, using (\ref{2.10}), (\ref{2.12}), (\ref{3.13}) and (\ref{3.59}),
\begin{align*}
N_5^1&\les  C\|\dot u\|_{L^4}\|\nabla u\|_{L^4}\|\nabla F\|_{L^2}\leq C\left(\|\dot u\|_{L^2}+\|\nabla \dot u\|_{L^2}\right) \|\nabla u\|_{L^4}\| \dot u\|_{L^2} \\
&\les  C\left(\|\dot u\|_{L^2}+\|\nabla^\bot\cdot \dot u\|_{L^2}+\|\dot V\|_{L^2}+\|\nabla u\|_{L^4}^2\right) \|\nabla u\|_{L^4}\| \dot u\|_{L^2} \\
& \les  \frac{1}{32}\left(\mu\|\nabla^\bot\cdot\dot u\|_{L^2}^2+\nu\|\dot V\|_{L^2}^2\right)+C\left(\|\nabla u\|_{L^4}^4+\|\dot u\|_{L^2}^2+\|\dot u\|_{L^2}^4\right).
\end{align*}

Owing to (\ref{3.4}), (\ref{3.8}), (\ref{3.13}) and (\ref{3.45}), we find
\begin{align*}
N_5^2&\les  C\nu^{-1}\|F\|_{L^4}^2\|\divg \dot u\|_{L^2}\les  C\nu^{-1}\|F\|_{L^2}\|\nabla F\|_{L^2}\left(\|\dot V\|_{L^2}+\|\nabla u\|_{L^4}^2\right)\\
&\les  C\|\dot u\|_{L^2}\left(\|\dot V\|_{L^2}+\|\nabla u\|_{L^4}^2\right)
\les \frac{\nu}{32}\|\dot V\|_{L^2}^2+C\left(\|\dot u\|_{L^2}^2+\|\nabla u\|_{L^4}^4\right).
\end{align*}
In a similar manner,
\begin{align*}
N_5^3&\les  C\nu^{-1}\|F\|_{L^4}\|P(\rho)-P(\widetilde \rho)\|_{L^4}\|\divg \dot u\|_{L^2}\\
&\les  C\nu^{-\frac{1}{2}} \|\dot u\|_{L^2}^{\frac{1}{2}}\|P(\rho)-P(\widetilde \rho)\|_{L^4}\left(\|\dot V\|_{L^2}+\|\nabla u\|_{L^4}^2\right)\\
&\les \frac{\nu}{32}\|\dot V\|_{L^2}^2+C\left(\|\dot u\|_{L^2}^2+\|\nabla u\|_{L^4}^4+\nu^{-1}\|P(\rho)-P(\widetilde \rho)\|_{L^4}\right)
\end{align*}
and
\begin{align*}
&N_5^4+N_5^5 \les C\left(\|\dot u\|_{L^2}\|\nabla F\|_{L^2}+\|\nabla\dot u\|_{L^2}\|\nabla u\|_{L^2}\right)\\
&\quad  \les \frac{1}{32}\left(\mu\|\nabla^\bot\cdot \dot u\|_{L^2}^2+\nu\|\dot V\|_{L^2}^2\right)+C\left(\|\nabla u\|_{L^2}^2+\|\dot u\|_{L^2}^2+\|\nabla u\|_{L^4}^4\right).
\end{align*}

Thus, inserting $N_5^i$ with $i=1,\ldots,5$ into (\ref{3.63}), we obtain
\begin{equation}
\begin{aligned}
N_5&\les \frac{1}{8}\left(\mu\|\nabla^\bot\cdot \dot u\|_{L^2}^2+\nu\|\dot V\|_{L^2}^2\right)+C\|\dot u\|_{L^2}^4\\
&\quad +C\left(\|\nabla u\|_{L^2}^2+\|\dot u\|_{L^2}^2+\|\nabla u\|_{L^4}^4+\nu^{-1}\|P(\rho)-P(\widetilde \rho)\|_{L^4}^4\right).
\end{aligned}\label{3.64}
\end{equation}

In a similar manner, using (\ref{3.2}), (\ref{3.6})$_1$, (\ref{3.8}) and (\ref{3.12}), we deduce
\begin{equation}\label{3.65}
\begin{aligned}
N_6 &= - \int (\divg u)\left(F-P(\widetilde \rho)\right) \dot V \md x\\
&\les C\left(\|F\|_{L^4}\|\nabla u\|_{L^4}+\|\nabla u\|_{L^2}\right)\|\dot V\|_{L^2}\\
&\les  C\nu^{\frac{1}{2}}\|\dot V\|_{L^2}\left(\nu^{-\frac{1}{2}}\|F\|^{\frac{1}{2}}_{L^2}\|\nabla F\|^{\frac{1}{2}}_{L^2}\|\nabla u\|_{L^4}+\nu^{-\frac{1}{2}}\|\nabla u\|_{L^2}\right)\\
&\les  C\nu^{\frac{1}{2}}\|\dot V\|_{L^2}\left(\|  \dot u\|^{\frac{1}{2}}_{L^2}\|\nabla u\|_{L^4}+\|\nabla u\|_{L^2}\right)\\
&\les  \frac{\nu}{32}\|\dot V\|_{L^2}^2+C\left(\|\nabla u\|^2_{L^2}+\|  \dot u\|_{L^2}^2+\|\nabla u\|_{L^4}^4\right)
\end{aligned}
\end{equation}
and
\begin{equation}\label{3.66}
\begin{aligned}
N_7
&=- \nu^{-1}\int\left(F+P(\rho)-P(\widetilde \rho) \right)\left(F-P(\widetilde \rho) \right)\partial_i u\cdot\nabla u_i\md x\\
&\les  C\nu^{-1}\|F\|_{L^4}^2\|\nabla u\|_{L^4}^2+C\|\nabla u\|_{L^2}^2\\
&\les  C\nu^{-1}\|F\|_{L^2}\|\nabla F\|_{L^2}\|\nabla u\|_{L^4}^2+C\|\nabla u\|_{L^2}^2\\
&\les  C\left(\|\nabla u\|_{L^2}^2+\|\dot u\|_{L^2}^2+ \|\nabla u\|_{L^4}^4\right).
\end{aligned}
\end{equation}

\vskip 2mm

\underline{\bf Step 3. The estimate of $N_8$}

\vskip 2mm

The treatment of the last term $N_8$ needs more work, due to the lack of the uniform bound of $\nu\|\nabla u\|_{L^4}^2$. First, recalling that $\dot V=(\divg u)_t+u\cdot\nabla(\divg u)$, by  (\ref{3.6})$_1$ and (\ref{3.29}) we write it in the form:
\begin{equation}\label{3.67}
\begin{aligned}
N_8&=-\int \left[(\nu\divg u)_t+u\cdot\nabla(\nu\divg u)\right]\partial_i u\cdot\nabla u_i\mdx \\
&=-\int \left[\left(F+P(\rho)-P(\widetilde \rho)\right)_t+u\cdot\nabla\left(F+P(\rho)-P(\widetilde \rho) \right)\right]\partial_i u\cdot\nabla u_i\mdx\\
&=-\int  F_t  \partial_i u\cdot\nabla u_i\mdx-\int \left(u\cdot\nabla F\right)\partial_i u\cdot\nabla u_i\mdx\\
&\qquad+\gamma\int P(\rho)(\divg u)\partial_i u\cdot\nabla u_i\mdx\\
& \triangleq  N_{8}^1+N_8^2+N_8^3.
\end{aligned}
\end{equation}

Since there is not any information of $F_t$, we have to rewrite the first term on the right-hand side of (\ref{3.67}) as
\begin{equation}\label{3.68}
\begin{aligned}
N_{8}^1&=-\frac{\md}{\md t}\int  F   \partial_i u\cdot\nabla u_i\mdx+\int  F  \left( \partial_i \partial_t u\cdot\nabla u_i+\partial_iu\cdot\nabla\partial_t u_{i}\right)\mdx\\
&=-\frac{\md}{\md t}\int  F   \partial_i u\cdot\nabla u_i\mdx-2\int   F \partial_t u\cdot\nabla\divg u\mdx-2\int \partial_t u\cdot\nabla u\cdot\nabla F \mdx.
\end{aligned}
\end{equation}

Keeping in mind that $u_t=\dot u-u\cdot\nabla u$, we have
\begin{align*}
&2\int   F \partial_t u\cdot\nabla\divg u\mdx= 2\int   F \dot u\cdot\nabla\divg u\mdx-2\int   F u\cdot\nabla u\cdot\nabla\divg u\mdx\\
&\quad = -2\int  (\divg u) \left( \dot u\cdot\nabla F + F\divg \dot u \right) \mdx+2\int   (\divg u) u\cdot\nabla u\cdot\nabla F\mdx\\
&\qquad +2\int   F(\divg u) \partial_i u\cdot\nabla u_i\mdx-\int  (\divg u)^2  u\cdot\nabla  F\mdx-\int  F (\divg u)^3 \mdx
\end{align*}
and
$$
2\int \partial_t u\cdot\nabla u\cdot\nabla F \mdx
 =2\int \dot u\cdot\nabla u\cdot\nabla F \mdx -2\int  u\cdot\nabla u\cdot\nabla u\cdot\nabla F \mdx,
$$
which, inserted into (\ref{3.68}), yields
\begin{equation}\label{3.69}
\begin{aligned}
N_{8}^1
&=-\frac{\md}{\md t}\int  F   \partial_i u\cdot\nabla u_i\mdx+2\int  (\divg u) \left( \dot u\cdot\nabla F + F\divg \dot u \right) \mdx\\
&\quad -2\int   (\divg u) u\cdot\nabla u\cdot\nabla F\mdx
 -2\int   F(\divg u) \partial_i u\cdot\nabla u_i\mdx\\
 &\quad +\int  (\divg u)^2  u\cdot\nabla  F\mdx+\int  F (\divg u)^3 \mdx\\
 &\quad -2\int \dot u\cdot\nabla u\cdot\nabla F \mdx +2\int  u\cdot\nabla u\cdot\nabla u\cdot\nabla F \mdx\\
 &\triangleq -\frac{\md}{\md t}\int  F   \partial_i u\cdot\nabla u_i\mdx+\sum_{i=1}^7 N_{8,i}^1.
\end{aligned}
\end{equation}

We are now in a position of handling each term on the right-hand side of (\ref{3.69}). First, similarly to the treatments of $N_5^1$, $N_5^2$ and $N_5^3$,  we have
\begin{align*}
N_{8,1}^1+N_{8,6}^1&\les C\|\nabla u\|_{L^4}\|\dot u\|_{L^4}\|\nabla F\|_{L^2}+C\nu^{-1} \|F\|_{L^4}^2\|\divg \dot u\|_{L^2} \\
&\quad +C\nu^{-1}\|F\|_{L^4}\|P(\rho)-P(\widetilde \rho)\|_{L^4} \|\divg \dot u\|_{L^2} \\
&\les \frac{1}{32}\left(\mu\|\nabla^\bot\cdot\dot u\|_{L^2}^2+\nu\|\dot V\|_{L^2}^2\right)+C\left(\|\nabla u\|_{L^2}^2+ \| \dot u\|_{L^2}^2\right)\\
&\quad+C\left( \|\nabla u\|_{L^4}^4+ \| \dot u\|_{L^2}^4+\nu^{-1}\|P(\rho)-P(\widetilde \rho) \|_{L^4}^4 \right).
\end{align*}

Replacing $\divg u $ by $\nu^{-1}(F+P(\rho)-P(\widetilde \rho))$ again, by (\ref{3.8}) and (\ref{3.12}) we deduce
\begin{align*}
& N_{8,3}^1+N_{8,5}^1
 \les C\nu^{-1}\int |F|\left( |F|+|P(\rho)-P(\widetilde \rho)|\right)|\nabla u|^2\mdx\\
&\quad \les  C\nu^{-1}\|F\|_{L^4}^2\|\nabla u\|_{L^4}^2+C\nu^{-1}\|F\|_{L^4}\|P(\rho)-P(\widetilde \rho)\|_{L^4}\|\nabla u\|_{L^4}^2\\
&\quad \les  C\|\nabla F\|_{L^2}\|\nabla u\|_{L^4}^2+C\nu^{-\frac{1}{2}}\|\nabla F\|_{L^2}^{\frac{1}{2}}\|P(\rho)-P(\widetilde \rho)\|_{L^4}\|\nabla u\|_{L^4}^2\\
&\quad \les  C\left(\|\nabla u\|_{L^4}^4+ \| \dot u\|_{L^2}^2+\nu^{-1}\|P(\rho)-P(\widetilde \rho)\|_{L^4}^4\right).
\end{align*}

Due to   (\ref{3.4}), we know that  $\|u\|_{H^1}$ is uniformly bounded. So, by (\ref{2.11}) and (\ref{3.8}) we obtain
\begin{align*}
N_{8,2}^1+N_{8,4}^1+N_{8,7}^1&\les  C\|u\|_{L^\infty}\|\nabla u\|_{L^4}^2\|\nabla F\|_{L^2}\\
& \les  C\left( \| u\|_{H^1}+\|\nabla u\|_{L^4}\right) \|\nabla u\|_{L^4}^2\| \dot u\|_{L^2}\\
& \les  C\left(\|  \dot{u}\|_{L^2}^2+\|\dot u\|_{L^2}^4 +\|\nabla u\|_{L^4}^4\right).
\end{align*}

Thus, putting the estimates of $N_{8,i}^1$ with $i=1,\dots,7$ into   (\ref{3.69}), we obtain
\begin{equation}\label{3.70}
\begin{aligned}
N_8^1
 &\les  -\frac{\md}{\md t}\int  F   \partial_i u\cdot\nabla u_i\mdx+\frac{1}{32}\left(\mu\|\nabla^\bot\cdot\dot u\|_{L^2}^2+\nu\|\dot V\|_{L^2}^2\right)+C\| \dot u\|_{L^2}^4\\
  &\quad + C\left(\|\nabla u\|_{L^2}^2+ \| \dot u\|_{L^2}^2+\|\nabla u\|_{L^4}^4+\nu^{-1}\|P(\rho)-P(\widetilde \rho)\|_{L^4}^4 \right).
\end{aligned}
\end{equation}

Analogously to the estimate of $N_{8,7}^1$, by (\ref{3.2}) we have
\begin{align*}
N_{8}^2+N_{8}^3&\les  C\|u\|_{L^\infty}\|\nabla u\|_{L^4}^2\|\nabla F\|_{L^2}+C\|\nabla u\|_{L^4}^4+C \|\nabla u\|^2_{L^2} \\
&\les  C\left(\|\dot u\|_{L^2}^2+ \|\nabla u\|_{L^4}^4 + \|  \dot u\|_{L^2}^4+\|\nabla u\|^2_{L^2}\right),
\end{align*}
which, together with (\ref{3.67}) and (\ref{3.70}), shows
\begin{equation}\label{3.71}
\begin{aligned}
N_{8}
 &\les -\frac{\md}{\md t}\int  F   \partial_i u\cdot\nabla u_i\mdx +\frac{1}{32}\left(\mu\|\nabla^\bot\cdot\dot u\|_{L^2}^2+\nu\|\dot V\|_{L^2}^2\right)+ C\| \dot u\|_{L^2}^4\\
 &\quad + C\left(\|\nabla u\|_{L^2}^2+ \| \dot u\|_{L^2}^2+\|\nabla u\|_{L^4}^4+\nu^{-1}\|P(\rho)-P(\widetilde\rho) \|_{L^4}^4 \right).
\end{aligned}
\end{equation}

\vskip 2mm

\underline{\bf Step 4. Ending the proof}

\vskip 2mm

Based upon (\ref{3.61}), (\ref{3.62}), (\ref{3.64}), (\ref{3.65}), (\ref{3.66}) and (\ref{3.71}), we infer from (\ref{3.60}) that there exists a positive constant $C$, depending on $\widetilde\rho$, $\underline\rho$ and $\overline\rho$, such that if (\ref{3.2}) holds and $\nu\ges \nu_1$ with $\nu_1$ being the same one as determined in Lemma \ref{lem3.1}, then
\begin{equation}
\begin{aligned}\label{3.72}
&\frac{\md}{\md t}\|\sqrt\rho \dot{u}\|_{L^2}^2+\frac{1}{2}\left(\mu\|\nabla^\bot\cdot \dot u\|_{L^2}^2+\nu\| \dot V\|_{L^2}^2\right)\\
&\quad\les  -2\frac{\md}{\md t}\int  F   \partial_i u\cdot\nabla u_i\mdx + C\left(\|\nabla u\|_{L^2}^2+ \| \dot u\|_{L^2}^2+ \|\dot w\|_{L^2}^2\right)\\
&\qquad+ C\left(\|w\|_{L^4}^4+\|\nabla u\|_{L^4}^4+\nu^{-1}\|P(\rho)-P(\widetilde \rho)\|_{L^4}^4 \right)+C\| \dot u\|_{L^2}^4.
\end{aligned}
\end{equation}

It remains to deal with the first term on the right-hand side of (\ref{3.72}). By direct calculations, we have
$$
 \partial_i u\cdot\nabla u_i=(\divg u)^2+2(\partial_1 u_2\partial_2 u_1-\partial_1 u_1\partial_2 u_2)=(\divg u)^2+2\nabla^\bot u_2\cdot\nabla u_1,
$$
and hence,
\begin{equation}\label{3.73}
\int  F \partial_i u\cdot\nabla u_i\mdx=\int F(\divg u)^2\mdx+2\int F\nabla^\bot u_2\cdot\nabla u_1\mdx.
\end{equation}

Analogously to the treatments of $N_{8,3}^1$ and $N_{8,5}^1$, using (\ref{3.2}), (\ref{3.8}), (\ref{3.12}), (\ref{3.13}) and  Cauchy inequality,  we obtain
\begin{equation}\label{3.74}
\begin{aligned}
&\left|\int F(\divg u)^2\mdx\right| =C\nu^{-1}\int |F|\left(|F|+|P(\rho)-P(\widetilde \rho)| \right)|\divg u| \mdx\\
&\quad \les  C\nu^{-1}\left(\|F\|_{L^4}^2+\|P(\rho)-P(\widetilde \rho)\|_{L^4}\|F\|_{L^4}\right)\|\divg u\|_{L^2}\\
&\quad \les  C\nu^{-\frac{1}{2}}\left( \|\nabla F\|_{L^2}+\|P(\rho)-P(\widetilde \rho) \|_{L^4} \|\nabla F\|^{\frac{1}{2}}_{L^2}\right)\|\divg u\|_{L^2}\\
&\quad  \les  \frac{1}{16}\|\sqrt\rho \dot u\|_{L^2}^2+ C \left(\|\divg u\|_{L^2}^2+\nu^{-1}\|P(\rho)-P(\widetilde \rho)\|_{L^4}^4\right).
\end{aligned}
\end{equation}
In the exactly same way as that used in  (\ref{3.41}), by (\ref{3.2}) we have
\begin{equation}\label{3.75}
\left|\int F\nabla^\bot u_2\cdot\nabla u_1\mdx\right|
\les  C\|\nabla F\|_{L^2}\|\nabla u\|_{L^2}^2\leq \frac{1}{16}\|\sqrt\rho\dot u\|_{L^2}^2+ C \|\nabla u\|_{L^2}^4 .
\end{equation}

Thus, substituting  (\ref{3.74}) and (\ref{3.75}) into (\ref{3.73}), we infer from (\ref{3.2}) and (\ref{3.4}) that for any $0\les t\les T$,
\begin{equation}\label{3.76}
\sigma(t)\left|\int  F   \partial_i u\cdot\nabla u_i\mdx\right|\les  \frac{1}{4} \sigma(t)\| \sqrt\rho \dot u\|_{L^2}^2+C,\quad \sigma(t)=\min\{1,t\}.
\end{equation}
and
\begin{equation}\label{3.77}
\begin{aligned}
&\int_0^t\left(\sigma(s)+\sigma'(s)\right)\left|\int  F   \partial_i u\cdot\nabla u_i\mdx\right|\md s\\
&\quad\les  C\int_0^t \left(\| \dot u\|_{L^2}^2+\|\divg u\|_{L^2}^2+\|\nabla u\|_{L^2}^4+\nu^{-1}\|P(\rho)-P(\widetilde \rho)\|_{L^4}^4\right)\md s \\
&\quad\les  C.
\end{aligned}
\end{equation}

Therefore,  multiplying (\ref{3.72}) by $\sigma(t)$ and integrating it over $(0,T)$,  we immediately arrive at the desired estimate stated in (\ref{3.5}),  based upon  (\ref{3.4}), (\ref{3.76}), (\ref{3.77}) and  Gronwall inequality.

\section{Global higher-order regularity}\label{sec4}

This section aims to prove the global higher-order estimates of the solutions, which ensure the global existence and uniqueness of strong solutions on $\R^2\times(0,T)$ for any $0<T<\infty$. From now on, we assume that the conditions of Proposition \ref{pro3.1} hold. That is,  the density is uniformly bounded from below and above and the bulk viscosity is taken to be large enough (i.e. $\nu\ges\nu_0$ with $\nu_0$ being the one determined in Proposition \ref{pro3.1}), and thus, the global  uniform estimates stated in (\ref{3.3}), (\ref{3.4}) and (\ref{3.5}) are valid.  For simplicity,  throughout this  section we denote by $C$ and $C(T)$ the various positive constants, which may depend additionally on $T$. We start with the global bound of $\|\nabla^\bot\cdot u\|_{L^p(0,T; L^\infty)}$ for some $1<p<\infty$, which plays an important role in the global and uniform estimates of the gradients of the density and the micro-rotational velocity.

\vskip 2mm

\begin{lem}\label{lem4.1}  Let $F=F(x,t)$ and $H=H(x,t)$ be the functions defined in \eqref{3.6}$_1$ and \eqref{3.35}, respectively. Then for any $2<q<\infty$,
\begin{equation}\label{4.1}
\int_0^T\left(\| \dot{u}\|_{L^q}^{\frac{q+1}{q}}+\|(\nabla F,\nabla H)\|_{L^q}^{\frac{q+1}{q}} + \nu^{-\frac{q+1}{2q}}\|F\|_{L^\infty}^{\frac{q+1}{q}}+\|H\|_{L^\infty}^{\frac{q+1}{q}}\right)\md t\\
\les C(T)
\end{equation}
and moreover,
\begin{equation}\label{4.2}
\int_0^T\left(  \nu^\frac{q+1}{2q}\|{\rm div} u\|_{L^\infty} ^\frac{q+1}{q}+\|\nabla^\bot\cdot u\|_{L^\infty}^\frac{q+1}{q}+\|w\|_{L^\infty}^\frac{q+1}{q}\right)\les C(T).
\end{equation}
\end{lem}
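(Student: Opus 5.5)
The whole estimate \eqref{4.1} rests on a single time-integrability bound for $\|\dot u\|_{L^q}$. I would obtain it by interpolating between the two weighted bounds already proved: the uniform $L^2$ bound of Lemma \ref{lem3.1} and the $\sigma$-weighted bound of Lemma \ref{lem3.2}. The fixed exponent $\frac{q+1}{q}$ is exactly what makes the singular weight at $t=0$ integrable.

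\textbf{Step 1: the bound on $\dot u$.} By \eqref{2.10},
\[
\|\dot u\|_{L^q}\les C\|\dot u\|_{L^2}^{2/q}\|\nabla\dot u\|_{L^2}^{(q-2)/q}.
\]
By \eqref{2.12} and \eqref{3.13}, $\|\nabla\dot u\|_{L^2}$ is controlled by $\|\nabla^\bot\cdot\dot u\|_{L^2}+\|\dot V\|_{L^2}+\|\nabla u\|_{L^4}^2$. Lemma \ref{lem3.2} gives $\int_0^T\sigma\|\nabla\dot u\|_{L^2}^2\,\md t\les C$ and $\sigma\|\dot u\|_{L^2}^2\les C$, and Lemma \ref{lem3.1} gives $\int_0^T\|\dot u\|_{L^2}^2\,\md t\les C$. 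Write
\[
\|\dot u\|_{L^q}^{\frac{q+1}{q}}\les C\,\sigma^{-a}\big(\sigma\|\dot u\|_{L^2}^2\big)^{b}\big(\sigma\|\nabla\dot u\|_{L^2}^2\big)^{c}\|\dot u\|_{L^2}^{2d}
\]
and apply H\"older in time. The exponents $b,c,d$ are chosen so that the time-integrated factors have total power at most one. The point to check is that the leftover weight satisfies $\sigma^{-a}\in L^{s}(0,1)$ with $as<1$.

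\textbf{Expected obstacle.} The hardest part is this exponent bookkeeping, and I expect it to be where the argument is tight. A cleaner route is to split $[0,T]$ into $[0,1]$ and $[1,T]$. On $[1,T]$ we have $\sigma=1$, and H\"older with exponent $\frac{q+1}{q}<2$ gives $C(T)$ directly. On $[0,1]$, use instead the interpolation $\|\dot u\|_{L^q}\les C\|\dot u\|_{L^2}^{2/q}\|\nabla \dot u\|_{L^2}^{1-2/q}$ with the unweighted $L^2_t$ bound on $\|\dot u\|_{L^2}$ and the weighted bound $\sqrt{t}\,\nabla\dot u\in L^2_t$. Multiplying by $\frac{q+1}{q}$, the powers of $\|\dot u\|_{L^2}$ and $t^{1/2}\|\nabla\dot u\|_{L^2}$ sum to less than $2$, leaving a weight $t^{-\frac{(q-2)(q+1)}{2q^2}}$. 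H\"older then requires this weight to the conjugate power to be integrable on $(0,1)$, which reduces to an elementary inequality in $q$. I expect that inequality to hold for $\frac{q+1}{q}$, but would check it first.

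\textbf{Step 2: the remaining terms in \eqref{4.1}.} Once the $\dot u$ bound holds, $\|(\nabla F,\nabla H)\|_{L^q}\les C\|\dot u\|_{L^q}$ follows from \eqref{3.8} and \eqref{3.37}. For the $L^\infty$ norms, apply \eqref{2.11} with $r=q$:
\[
\|F\|_{L^\infty}\les C\|F\|_{L^2}^{\theta}\|\nabla F\|_{L^q}^{1-\theta}.
\]
Here \eqref{3.12} gives $\|F\|_{L^2}\les C\nu^{1/2}$, so the factor $\nu^{-\frac{q+1}{2q}}$ absorbs the growth in $\nu$. Similarly $\|H\|_{L^2}\les C$ by \eqref{3.47} and \eqref{3.4}. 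Since the power $1-\theta<1$ of $\|\nabla F\|_{L^q}$ lowers the time exponent, these terms are bounded by the integrals already estimated.

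\textbf{Step 3: \eqref{4.2}.} Write $\nu\,\divg u=F+P(\rho)-P(\widetilde\rho)$, so that
\[
\nu^{1/2}\|\divg u\|_{L^\infty}\les \nu^{-1/2}\|F\|_{L^\infty}+C
\]
by \eqref{3.3}. Also $\nabla^\bot\cdot u=H+\frac{2\zeta}{\mu+\zeta}w$, so it remains to bound $\|w\|_{L^\infty}$. Rewrite \eqref{1.1}$_3$ as
\[
\rho\dot w+\frac{4\mu\zeta}{\mu+\zeta}w-\varepsilon\Delta w=2\zeta H.
\]
The maximum principle for this equation, which is uniform in $\varepsilon$, gives
\[
\|w(t)\|_{L^\infty}\les \|w_0\|_{L^\infty}+C\int_0^t\|H\|_{L^\infty}\,\md s.
\]
Combined with the $H$ bound in \eqref{4.1}, this controls $w$ in $L^\infty_tL^\infty_x$ and hence in $L^{\frac{q+1}{q}}_tL^\infty_x$, with $\|w_0\|_{L^\infty}$ finite by \eqref{1.9}. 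This completes \eqref{4.2}.
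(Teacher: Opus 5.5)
Your proposal is correct and follows essentially the paper's argument: Gagliardo--Nirenberg interpolation of $\dot u$ combined with $\sigma$-weighted H\"older in time, then the elliptic bounds \eqref{3.8} and \eqref{3.37} for $\nabla F,\nabla H$, then \eqref{2.11} for the $L^\infty$ norms, the identity $\nu\,\mathrm{div}\,u=F+P(\rho)-P(\widetilde\rho)$, and the maximum principle for $w$ (your inclusion of $\|w_0\|_{L^\infty}$, finite by \eqref{1.9}, is more careful than \eqref{4.7}). The only variation is that on $(0,1)$ you pair the $L^2_t$ bound on $\|\dot u\|_{L^2}$ from Lemma \ref{lem3.1} with $\sqrt t\,\nabla\dot u\in L^2_t$, whereas the paper uses $\sup_t\sigma\|\dot u\|_{L^2}^2$; the inequality you left unchecked does hold, since the conjugate exponent is $\frac{2q}{q-1}$ and integrability of $t^{-\frac{(q-2)(q+1)}{2q^2}\cdot\frac{2q}{q-1}}$ near $0$ amounts to $(q-2)(q+1)<q(q-1)$, i.e. $-2<0$.
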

\begin{proof} It readily follows from (\ref{2.10}), (\ref{2.12}) and (\ref{3.13}) that  for any $2< q<\infty$,
\begin{align*}
\| \dot{u}\|_{L^q} &\les C \| \dot{u}\|_{L^2}^{\frac{2}{q}}\|\nabla \dot{u}\|_{L^2}^{\frac{q-2}{q}}\\
&\les  C\| \dot{u}\|_{L^2}^{\frac{2}{q}}\left(\|\nabla^\bot\cdot  \dot{u}\|_{L^2}  +\|\divg \dot u\|_{L^2} \right)^{\frac{q-2}{q}}  \\
&\les   C\| \dot{u}\|_{L^2}^{\frac{2}{q}}\left(\|\nabla^\bot\cdot  \dot{u}\|_{L^2} +\|\dot V\|_{L^2}   +\|\nabla u\|_{L^4}^2\right)^{\frac{q-2}{q}},
\end{align*}
which, combined with (\ref{3.4}), (\ref{3.5}), Cauchy-Schwarz and H\"{o}lder inequalities, yields
\begin{equation}\label{4.3}
\begin{aligned}
\int_0^T\| \dot{u}\|_{L^q}^{\frac{q+1}{q}}\md t
&\les C\int_0^T \| \dot{u}\|_{L^2}^{\frac{2(q+1)}{q^2}}\left(\|\nabla^\bot\cdot  \dot{u}\|_{L^2} +\|\dot V\|_{L^2}\right)^{\frac{(q-2)(q+1)}{q^2}}\md t\\
&\les C\sup_{0\les t\les T}\left(\sigma(t) \| \dot{u}\|_{L^2}^2\right)^{\frac{q+1}{q^2}}\\
&\qquad\quad\times \int_0^T \sigma(t)^{-\frac{q+1}{2q}}\left[\sigma(t)\left(\|\nabla^\bot\cdot\dot u\|_{L^2}^2+ \|\dot V\|_{L^2}^2 \right)\right]^{\frac{q^2-q-2}{2q^2}} \md t \\
&\les C\left[\int_0^T \sigma(t)\left(\|\nabla^\bot\cdot\dot u\|_{L^2}^2+ \|\dot V\|_{L^2}^2 \right)\md t\right]^{\frac{q^2-q-2}{2q^2}}\\
& \les C(T),
\end{aligned}
\end{equation}
where we have used the following simple fact that
$$
\int_0^T \sigma(t)^{-\frac{q(q+1)}{q^2+q+2}} \md t \leq C(T)+\int_0^1 t^{-\frac{q(q+1)}{q^2+q+2}} \md t\les  C(T),\quad \forall\ 2<q<\infty.
$$

As an easy result of (\ref{3.3}) and (\ref{4.3}), we infer from (\ref{3.7}) and (\ref{3.36}) that
\begin{equation}\label{4.4}
\int_0^T\left(\|\nabla F\|_{L^q}^{\frac{q+1}{q}}+\|\nabla H\|_{L^q}^{\frac{q+1}{q}}\right)\md t \les C\int_0^T\|\dot u\|_{L^q}^{\frac{q+1}{q}}\md t\leq C(T),
\end{equation}
which, together with (\ref{2.11}), (\ref{3.4}) and (\ref{3.12}), gives
\begin{equation}\label{4.5}
\begin{aligned}
&\int_0^T\left(\nu^{-\frac{1}{2}}\|  F\|_{L^\infty}+\|  H\|_{L^\infty} \right)^{\frac{q+1}{q}}\md t \\
&\quad \les C\int_0^T\left(\nu^{-\frac{ 1}{2}}\|F\|_{L^2} +\|\nabla F\|_{L^q} +\|H\|_{L^2} +\|\nabla H\|_{L^q}\right)^{\frac{q+1}{q}}\md t\\
&\quad\les C(T).
\end{aligned}
\end{equation}
This, together with (\ref{4.3}) and (\ref{4.4}), finishes the proof of (\ref{4.1}).

To prove (\ref{4.2}), we first deduce from  (\ref{3.3}), (\ref{3.6})$_1$ and (\ref{4.5}) that
\begin{equation}\label{4.6}
\int_0^T \left(\sqrt\nu \|\divg u\|_{L^\infty}\right)^\frac{q+1}{q}\md t \les  C(T)+C\int_0^T  \nu^{-\frac{q+1}{2q}}\|  F\|_{L^\infty}^\frac{q+1}{q}\md t\les C(T).
\end{equation}

Due to (\ref{3.3}), the micro-rotational velocity $w$ satisfies
$$
 -C\|H\|_{L^\infty}\les w_t+ u\cdot\nabla w+\frac{4\mu\zeta}{\mu+\zeta} \rho^{-1} w-\varepsilon\rho^{-1}\Delta w=2\zeta\rho^{-1} H\les C\|H\|_{L^\infty},
$$
and thus, it follows from (\ref{3.3}), (\ref{4.5}) and the maximum principle that
\begin{equation}\label{4.7}
\int_0^T \|w(t)\|_{L^\infty}^{\frac{q+1}{q}}\md t\les \int_0^T \|H\|_{L^\infty}^\frac{q+1}{q}\md t\les  C(T),
\end{equation}
which, combined with (\ref{3.35}) and (\ref{4.5}) again, gives
\begin{equation}\label{4.8}
\int_0^T\|\nabla^\bot\cdot u\|_{L^\infty}^\frac{q+1}{q}\md t\les  C\int_0^T\left(\|H\|_{L^\infty}^\frac{q+1}{q}+\|w\|_{L^\infty}^\frac{q+1}{q}\right)\md t\les  C(T).
\end{equation}
Therefore, collecting (\ref{4.6}), (\ref{4.7}) and (\ref{4.8}) together leads to (\ref{4.2}).
\end{proof}

\vskip 2mm

Based upon Proposition \ref{pro3.1}, Lemmas \ref{lem3.1}, \ref{lem3.2} and \ref{lem4.1}, we can now make use of the BKM's inequality in Lemma \ref{lem2.4} to
estimate the  $L^p$-norms ($p=2,4$)  of the gradients of the density and the micro-rotational velocity.

\vskip 2mm

\begin{lem}\label{lem4.2} Assume that $(\nabla\rho_0,\nabla w_0)\in L^2\cap L^4$. Then  for any $r\in(0,1/4)$,
\begin{equation}
\sup_{0\leq t\leq T}\|(\nabla\rho,\nabla w)(t)\|_{{L^2}{\cap}{L^4}}+\int_0^T\left(\|\nabla u\|_{W^{1,4}}^{1+r}+\|\nabla u\|_{L^\infty}^{1+r}\right)\md t\les C(r,T).\label{4.9}
\end{equation}
\end{lem}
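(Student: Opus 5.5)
We will prove \eqref{4.9} through a BKM-type Gronwall argument for $\Phi(t)\triangleq e+\|(\nabla\rho,\nabla w)(t)\|_{L^2\cap L^4}$, followed by a log-Gronwall closure. The steps are: differentiate the transport equations, control $\nabla^2 u$ in $L^p$ by $\Phi$, insert the result into Lemma \ref{lem2.4}, close, and finally recover the integrability of $\nabla u$.

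\textbf{Step 1 (density).} Apply $\nabla$ to (\ref{1.1})$_1$, multiply by $p|\nabla\rho|^{p-2}\nabla\rho$ with $p=2,4$, and integrate. Writing $\rho\,\nabla\divg u=\nu^{-1}\rho(\nabla F+P'(\rho)\nabla\rho)$, the pressure part has a favourable sign. This leaves
\[
\frac{\md}{\md t}\|\nabla\rho\|_{L^p}\les C\|\nabla u\|_{L^\infty}\|\nabla\rho\|_{L^p}+C\nu^{-1}\|\nabla F\|_{L^p}.
\]
By (\ref{3.8}) and (\ref{4.1}), $\nu^{-1}\|\nabla F\|_{L^p}\les C\|\dot u\|_{L^p}$ is integrable in time.

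\textbf{Step 1 (micro-rotation).} Write (\ref{1.1})$_3$ as $w_t+u\cdot\nabla w+\frac{4\mu\zeta}{\mu+\zeta}\rho^{-1}w-\varepsilon\rho^{-1}\Delta w=2\zeta\rho^{-1}H$, apply $\nabla$, and test with $|\nabla w|^{p-2}\nabla w$. Every estimate must be uniform in $\varepsilon$, so the $\varepsilon$-term is the delicate one. I would first try testing the undivided form $\rho\dot w-\varepsilon\Delta w=\dots$ against $-\divg(|\nabla w|^{p-2}\nabla w)$, so that the diffusion produces a nonnegative term plus a commutator $\varepsilon\nabla\rho\cdot(\cdot)$. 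That commutator is absorbed using the $\varepsilon\|\nabla^2 w\|$ dissipation; if this fails, I would estimate it by $C\varepsilon\|\nabla\rho\|_{L^4}\|\nabla w\|_{L^4}\|\nabla^2w\|$-type terms and absorb them. The remaining forcing is $\|\nabla H\|_{L^p}+\|\nabla\rho\|_{L^p}(\|H\|_{L^\infty}+\|w\|_{L^\infty})$. This is controlled by (\ref{3.37}), (\ref{4.1}) and (\ref{4.2}), whose $L^\infty$ norms are integrable with power $\frac{q+1}{q}>1$.

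\textbf{Step 2 (second derivatives of $u$).} Take $q=4$ in Lemma \ref{lem2.4}. Since $\divg u=\nu^{-1}(F+P-P(\widetilde\rho))$ and $\nabla^\bot\cdot u=H+\frac{2\zeta}{\mu+\zeta}w$,
\[
\|\nabla^2u\|_{L^4}\les C\big(\nu^{-1}\|\nabla F\|_{L^4}+\nu^{-1}\|\nabla\rho\|_{L^4}+\|\nabla H\|_{L^4}+\|\nabla w\|_{L^4}\big)\les C(\|\dot u\|_{L^4}+\Phi).
\]

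\textbf{Step 3 (closing).} From (\ref{2.13}) and (\ref{4.2}),
\[
\|\nabla u\|_{L^\infty}\les C\big(1+a(t)\big)\ln\big(e+\|\dot u\|_{L^4}+\Phi\big),\qquad a\triangleq\|\divg u\|_{L^\infty}+\|\nabla^\bot\cdot u\|_{L^\infty},
\]
where $a\in L^{5/4}(0,T)$. Combining with Step 1 gives
\[
\frac{\md}{\md t}\ln\Phi\les C(1+a)\big(\ln(e+\|\dot u\|_{L^4})+\ln\Phi\big)+b(t),\qquad b\in L^1(0,T).
\]
The product $a\ln(e+\|\dot u\|_{L^4})$ is integrable by H\"older, using $a\in L^{5/4}$ and $\|\dot u\|_{L^4}\in L^{5/4}$. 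Gronwall then bounds $\ln\Phi$, which is the first part of \eqref{4.9}. Knowing $\Phi\les C(T)$, Step 2 yields $\|\nabla u\|_{W^{1,4}}\les C(1+\|\dot u\|_{L^4})$, together with $\|\nabla u\|_{L^\infty}\les C(1+\|\nabla u\|_{W^{1,4}})$. By (\ref{4.1}) this puts $\nabla u$ in $L^{1+r}$ for $1+r\les\frac54$, i.e.\ $r<1/4$.

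\textbf{Main obstacle.} I expect the $\varepsilon$-uniform $L^4$ estimate of $\nabla w$ in Step 1 to be the hardest part, specifically the diffusion commutator with variable $\rho$. This is presumably where the lower bound on $\rho$ and the $L^4$ control of $\nabla\rho$ are used jointly.
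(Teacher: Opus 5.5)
\textbf{There is a genuine gap in the $\varepsilon$-uniform estimate of $\|\nabla w\|_{L^4}$ (Step 1), at the point you yourself flagged, and it breaks the closing argument in Step 3.} Whether you differentiate the divided equation or test the undivided one against $-\mathrm{div}(|\nabla w|^{2}\nabla w)$, the variable coefficient $\rho$ produces a commutator of size $C\varepsilon\int|\nabla\rho||\nabla w|^3|\nabla^2 w|\,\md x$. The only available dissipation, $\varepsilon\int|\nabla w|^2|\nabla^2w|^2\,\md x$, carries the same factor $\varepsilon$, so the commutator cannot be absorbed completely. The best you can get from H\"older, \eqref{2.10} applied to $|\nabla w|^2$, and Young is
\[
C\varepsilon\int|\nabla\rho||\nabla w|^3|\nabla^2 w|\,\md x\le C\varepsilon\|\nabla\rho\|_{L^4}\|\nabla w\|_{L^4}\big\||\nabla w||\nabla^2 w|\big\|_{L^2}^{3/2}\le \frac{\varepsilon}{2\overline\rho}\big\||\nabla w||\nabla^2 w|\big\|_{L^2}^2+C\varepsilon\|\nabla\rho\|_{L^4}^4\|\nabla w\|_{L^4}^4 .
\]
Consequently $\frac{\md}{\md t}\|\nabla w\|_{L^4}$ contains the extra term $C(\varepsilon\|\nabla w\|_{L^4})\|\nabla\rho\|_{L^4}^4$. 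This term is not in your list of ``remaining forcing'', and your functional does not control it. Since $\Phi$ is linear in $\|\nabla\rho\|_{L^4}$, dividing by $\Phi$ leaves $(\varepsilon\|\nabla w\|_{L^4})\|\nabla\rho\|_{L^4}^{3}$, which is neither integrable a priori nor bounded by $\ln\Phi$. So the inequality for $\frac{\md}{\md t}\ln\Phi$ with $b\in L^1(0,T)$ does not follow, and the Gronwall step does not close, uniformly in $\varepsilon$ or otherwise.

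\textbf{Two ingredients are missing, and the paper uses both.} First, by \eqref{2.10} and \eqref{3.4},
\[
\varepsilon\|\nabla w\|_{L^4}\le C(\varepsilon\|\nabla w\|_{L^2})^{1/2}(\varepsilon\|\nabla^2 w\|_{L^2})^{1/2},
\]
and this is bounded in $L^1(0,T)$ uniformly in $\varepsilon$, because $\varepsilon\|\nabla w\|_{L^2}^2\le C$ and $\varepsilon^2\int_0^T\|\nabla^2 w\|_{L^2}^2\,\md t\le C$. Second, the log-Gronwall argument must be run on an unbalanced functional such as $\Gamma=e+\|\nabla\rho\|_{L^4}^4+\|\nabla w\|_{L^4}$. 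With this choice:
\begin{itemize}
\item the density inequality is homogeneous in $\nabla\rho$, hence linear in $\|\nabla\rho\|_{L^4}^4$;
\item the bad term becomes $(\varepsilon\|\nabla w\|_{L^4})\Gamma$, with an integrable coefficient;
\item sources such as $(\|H\|_{L^\infty}+\|w\|_{L^\infty})\|\nabla\rho\|_{L^4}$ and $\|\nabla F\|_{L^4}\|\nabla\rho\|_{L^4}^3$ are bounded by integrable functions times $1+\|\nabla\rho\|_{L^4}^4$;
\item in the BKM bound, $\ln(e+\|\nabla\rho\|_{L^4}+\|\nabla w\|_{L^4})\le C\ln\Gamma$.
\end{itemize}

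For the same reason, the $L^2$ gradient bounds should be derived afterwards, not inside a single functional together with the $L^4$ norms. In the $L^2$ estimate the commutator leaves $C\varepsilon\|\nabla\rho\|_{L^4}^2\|\nabla w\|_{L^4}^2$, which is harmless only once the $L^4$ bounds and $\nabla u\in L^1(0,T;L^\infty)$ are known.

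The rest of your plan is sound:
\begin{itemize}
\item the favourable sign of the pressure term in the $\nabla\rho$ equation;
\item the $L^4$ bound on $\nabla^2u$ through $F$ and $H$;
\item the BKM step with $a\in L^{5/4}(0,T)$;
\item recovering $\nabla u\in L^{1+r}(0,T;W^{1,4}\cap L^\infty)$ from \eqref{4.1} and $W^{1,4}\hookrightarrow L^\infty$.
\end{itemize}
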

\begin{proof} First, applying $\nabla$ on both sides of \eqref{1.1}$_1$ and multiplying it by $4|\nabla \rho|^2\nabla \rho$, by (\ref{3.3}) and (\ref{3.6}) we obtain after integrating by parts that
\begin{equation}
\begin{aligned}
\frac{\md}{\md t}\|\nabla\rho\|_{L^4}^4&\les C\|\nabla u\|_{L^\infty}\|\nabla\rho\|_{L^4}^4+C\|\nabla\divg u\|_{L^4}\|\nabla\rho\|_{L^4}^{3}\\
&\les  C\|\nabla u\|_{L^\infty}\|\nabla\rho\|_{L^4}^4+C\left(\|\nabla F\|_{L^4}+\|\nabla \rho\|_{L^4}\right)\|\nabla\rho\|_{L^4}^{3}
\end{aligned}\label{4.10}
\end{equation}

Next, we consider the estimate of $\|\nabla w\|_{L^4}$. It follows from (\ref{1.1})$_3$ and (\ref{3.35}) that
\begin{align*}
&\partial_i w_t+u\cdot\nabla\partial_i w+\frac{4\mu\zeta}{\mu+\zeta} \rho^{-1} \partial_i w-\varepsilon\divg\left(\rho^{-1}\nabla\partial_i w \right)\\
&\quad =-\partial_i u\cdot\nabla w +\frac{4\mu\zeta}{\mu+\zeta} \rho^{-2 }w\partial_i\rho + 2\zeta \rho^{-2}\left(\rho\partial_i H-H\partial_i\rho\right) \\ &\qquad +\varepsilon \rho^{-2}\left(\nabla\rho\cdot\nabla\partial_iw - \partial_i \rho\Delta w\right),
\end{align*}
which, multiplied by $4|\nabla w|^{2}\partial_i w$ and integrated by parts over $\mathbb{R}^2$, yields
\begin{equation}\label{4.11}
\begin{aligned}
&\frac{\md}{\md t}\int |\nabla w|^4\mdx +\frac{16\mu\zeta}{\mu+\zeta} \int \frac{|\nabla w|^4}{\rho}\mdx+
\varepsilon\overline\rho^{-1}\int |\nabla^2 w|^2|\nabla w|^2 \md x\\
&\quad\les  C\int |\nabla u| |\nabla w|^4\mdx+C\int |w||\nabla \rho| |\nabla w|^3\mdx+C \int |\nabla H| |\nabla w|^3 \mdx\\
&\qquad+ C\int |H| |\nabla \rho| |\nabla w|^3\mdx+C\varepsilon\int |\nabla \rho| |\nabla w|^3 |\nabla^2 w|\md x\triangleq \sum_{i=1}^5M_i.
\end{aligned}
\end{equation}
where we have used (\ref{3.3}).
It is easily seen that
\begin{align*}
M_1 +M_3  &\les  C\|\nabla u\|_{L^\infty}\|\nabla w\|_{L^4}^4+ C\|\nabla H\|_{L^4}
 \|\nabla w\|_{L^4}^3,
\\[2mm]
M_2 +M_4  &\les  C\left(\|H\|_{L^\infty}+\|w\|_{L^\infty}\right)\|\nabla\rho\|_{L^4} \|\nabla w\|_{L^4}^3,
\end{align*}
and
\begin{align*}
M_5&\les  C\varepsilon\|\nabla \rho\|_{L^4}\left\||\nabla w|^2\right\|_{L^4}\left\||\nabla w||\nabla^2w|\right\|_{L^2}\\
&\les  C \varepsilon  \|\nabla \rho\|_{L^4}
\left\||\nabla w|^2\right\|_{L^2}^{\frac{1}{2}}\left\|\nabla (|\nabla w|^2)\right\|_{L^2}^{\frac{1}{2}} \left\||\nabla w||\nabla^2w|\right\|_{L^2}\\
&\les  C \varepsilon  \|\nabla \rho\|_{L^4} \left\|\nabla w\right\|_{L^4} \left\||\nabla w||\nabla^2w|\right\|_{L^2}^{\frac{3}{2}}\\
&\les  \frac{\varepsilon}{2}\overline\rho^{-1}\left\||\nabla w|^2|\nabla^2 w|\right\|_{L^2}^2 + C\varepsilon  \|\nabla \rho\|_{L^4}^4 \left\|\nabla w\right\|_{L^4}^4,
\end{align*}
which, inserted   into (\ref{4.11}), shows
\begin{equation}\label{4.12}
\begin{aligned}
\frac{\md}{\md t}\|\nabla w\|_{L^4}&\les  C\|\nabla u\|_{L^\infty}\|\nabla w\|_{L^4}+C \varepsilon \|\nabla w\|_{L^4} \|\nabla\rho \|_{L^4}^4\\
&\quad + C\left(\|H\|_{L^\infty}+\|w\|_{L^\infty}\right)\|\nabla\rho\|_{L^4}+C \|\nabla H\|_{L^4}.
\end{aligned}
\end{equation}

In view of (\ref{3.8}) and (\ref{3.37}), we obtain from  (\ref{4.10}) and (\ref{4.12})  that
\begin{equation}\label{4.13}
\begin{aligned}
&\frac{\md}{\md t}\left(\|\nabla\rho\|_{L^4}^4+\|\nabla w\|_{L^4}\right)\\
&\quad \les   C\left(1+\|\nabla u\|_{L^\infty}\right)\left(\|\nabla\rho\|_{L^4}^4+\|\nabla w\|_{L^4}\right) +\varepsilon\|\nabla w\|_{L^4}\|\nabla\rho\|_{L^4}^4 \\
&\qquad +C\left( \|(H,w)\|_{L^\infty}+\|(\nabla F,\nabla H)\|_{L^4}\right)\left(1+\|\nabla\rho\|_{L^4}^4 \right)\\
&\quad \les   C\left(1+\|\nabla u\|_{L^\infty}\right)\left(\|\nabla\rho\|_{L^4}^4+\|\nabla w\|_{L^4}\right) +\varepsilon\|\nabla w\|_{L^4}\|\nabla\rho\|_{L^4}^4 \\
&\qquad +C\left( \|(H,w)\|_{L^\infty}+\|\dot u\|_{L^4}\right)\left(1+\|\nabla\rho\|_{L^4}^4 \right).
\end{aligned}
\end{equation}

It remains to show that $\|\nabla u\|_{L^\infty}\in  L^1(0,T)$. Indeed, recalling the definitions of $F$ and $H$ given respectively in \eqref{3.6}$_1$ and (\ref{3.35}), we have by (\ref{2.12}) that
\begin{equation}
\begin{aligned}
\|\nabla u\|_{W^{1,4}}&\les C\left(\|\divg u\|_{W^{1,4}}+\|\nabla^\bot\cdot u\|_{W^{1,4}}\right)\\
&\les  C\left(\nu^{-1}\|F\|_{W^{1,4}}+ \nu^{-1}\|P-P(\widetilde{\rho})\|_{W^{1,4}}+\|H\|_{W^{1,4}}+\|w\|_{W^{1,4}}\right)\\
&\les  C\left(1+\nu^{-1}\|F \|_{H^1}+\|H\|_{H^1}+\|(\nabla F,\nabla H)\|_{L^4}+\|(\nabla \rho,\nabla w)\|_{L^4}\right)\\
&\les  C\left(1+\| \dot{u}\|_{L^2}+\|\dot u\|_{L^4}+\|(\nabla \rho,\nabla w)\|_{L^4}\right).
\end{aligned}\label{4.14}
\end{equation}
where we have also used Lemma \ref{lem2.2}, (\ref{3.3}), (\ref{3.4}), (\ref{3.8}), (\ref{3.12}) and (\ref{3.37}). Thus, by virtue of (\ref{3.4}) and (\ref{4.14}), we infer from the BKM's inquality   \eqref{2.13}  that
\begin{equation}\label{4.15}
\begin{aligned}
\|\nabla u\|_{L^\infty}
&\les C\left(\|\divg u\|_{L^{\infty}}+\|\nabla^\bot\cdot u\|_{L^\infty}\right)\ln \left(e+\|\nabla u\|_{W^{1,4}}\right) +C\\
&  \les C\left(1+\|\divg u\|_{L^{\infty}}+\|\nabla^\bot\cdot u\|_{L^\infty}\right)\ln \left(e+\|\nabla \rho\|_{L^4}+\|\nabla w\|_{L^4}\right) \\
&\quad+ C\left(\|\divg u\|_{L^{\infty}}+\|\nabla^\bot\cdot u\|_{L^\infty}\right)\ln \left(e+\| \dot{u}\|_{L^2}+\|\dot u\|_{L^4}\right).
\end{aligned}
\end{equation}

To be continued, let
$$
\Gamma (t)\triangleq e+\|\nabla\rho\|_{L^4}^4+\|\nabla w\|_{L^4}.
$$
Then, substituting (\ref{4.15}) into (\ref{4.13}), we find
\begin{equation}\label{4.16}
(\ln \Gamma (t))'\les C\left(1+\|\divg u\|_{L^{\infty}}+\|\nabla^\bot\cdot u\|_{L^\infty}\right)\ln \Gamma(t) +G (t) \\
\end{equation}
where
\begin{align*}
G(t)&\triangleq C\left(\|\divg u\|_{L^{\infty}}+\|\nabla^\bot\cdot u\|_{L^\infty}\right)\ln \left(e+\| \dot{u}\|_{L^2}+\|\dot u\|_{L^4}\right)\\
& \quad+C\left(1+\|(H,w)\|_{L^\infty}+\|\dot u\|_{L^4}+\varepsilon\|\nabla w\|_{L^4}\right)
\end{align*}

Thanks to (\ref{2.10}) and (\ref{3.4}), we have
$$
\varepsilon\|\nabla w\|_{L^4}\les  C\varepsilon \|\nabla w\|_{H^1} \in L^1(0,T),
$$
which, together with (\ref{3.4}), (\ref{4.1}), (\ref{4.2}) and the simple fact that $a\ln(1+ b)\les C(\beta)(a^{1+\beta}+b^{1+\beta})$ for any $0<a,b<\infty$ and $\beta\in(0,1)$, implies that $G(t)\in L^1(0,T)$. Thus, by (\ref{4.1}) and Gronwall inequality we conclude from (\ref{4.16}) that
$$
\ln \Gamma(t)\les C(T)\quad \Longrightarrow\quad  \Gamma(t)\les  C(T),\quad \forall\ 0\les t\les T,
$$
that is,
\begin{equation}\label{4.17}
\|\nabla\rho(t)\|_{L^4}+\|\nabla w(t)\|_{L^4}\les C(T),\quad \forall\ 0\les t\les T,
\end{equation}
which, combined with (\ref{4.1}) and (\ref{4.15}), also shows that
\begin{equation}\label{4.18}
\int_0^T\|\nabla u\|_{L^\infty}^{1+r}\md t\les C(T)\;\;{\text{for\ any}}\;\; r\in(0,1/4).
\end{equation}

Following the derivation of (\ref{4.17}) step by step (indeed, much easier), we can make use of (\ref{4.1}), (\ref{4.2}) and (\ref{4.18}) to get that
$$
\|\nabla\rho(t)\|_{L^2}+\|\nabla w(t)\|_{L^2}\les C(T),\quad \forall\ 0\les t\les T.
$$
This, together with (\ref{4.17}) and (\ref{4.18}), finishes the proof of Lemma \ref{lem4.2}.
\end{proof}

As an easy consequence of  the estimates achieved, we can prove the following refined estimates for the velocity and the micro-rotational velocity.

\begin{lem}\label{lem4.3}Let the conditions of Proposition \ref{3.1} and Lemma \ref{lem4.2} be satisfied. Then for any $0< T<\infty$,
\begin{equation}\label{4.19}
\begin{aligned}
&\sup_{0\les  t\les  T} \left(t\|\nabla u(t)\|_{H^1}^2+t\|u_t(t)\|_{L^2}^2 \right)\\
 &\qquad   + \int_0^T \left(\|\nabla u\|_{H^1}^2+\|(u_t,w_t)\|_{L^2}^2+t\|\nabla u_t\|_{L^2}^2\right)\md t\les C(T).
 \end{aligned}
\end{equation}
\end{lem}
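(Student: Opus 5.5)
The plan is to derive everything from the elliptic decomposition (\ref{4.14}) combined with the uniform bounds (\ref{3.4}), (\ref{3.5}), (\ref{4.1}) and (\ref{4.9}). No new energy identity is needed. The argument runs in three steps.

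\textbf{Step 1: the bound for $\|\nabla u\|_{H^1}$.} Using (\ref{2.12}) with $k=1$, $p=2$, together with $\nu\divg u=F+P(\rho)-P(\widetilde\rho)$ and the definition (\ref{3.35}) of $H$, we get
$$
\|\nabla^2 u\|_{L^2}\les C\left(\nu^{-1}\|\nabla F\|_{L^2}+\nu^{-1}\|\nabla\rho\|_{L^2}+\|\nabla H\|_{L^2}+\|\nabla w\|_{L^2}\right)\les C\left(\|\dot u\|_{L^2}+1\right).
$$
Here we used (\ref{3.8}), (\ref{3.37}) and (\ref{4.9}). Combining this with $\|\nabla u\|_{L^2}\les C$ from (\ref{3.4}), we obtain
$$
\|\nabla u\|_{H^1}^2\les C(1+\|\dot u\|_{L^2}^2).
$$
Then (\ref{3.4}) gives $\int_0^T\|\nabla u\|_{H^1}^2\md t\les C(T)$. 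Multiplying by $t\les C(T)\sigma(t)$ and using (\ref{3.5}) gives $\sup_t t\|\nabla u\|_{H^1}^2\les C(T)$.

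\textbf{Step 2: the bounds for $u_t$ and $w_t$.} We write $u_t=\dot u-u\cdot\nabla u$ and $w_t=\dot w-u\cdot\nabla w$. Since $\|u\|_{L^\infty}\les C\|u\|_{H^1}^{1/2}\|\nabla u\|_{H^1}^{1/2}$ by Lemma \ref{lem2.2}, we have
$$
\|u\cdot\nabla u\|_{L^2}\les \|u\|_{L^4}\|\nabla u\|_{L^4}\les C\|\nabla u\|_{H^1}.
$$
Similarly, $\|u\cdot\nabla w\|_{L^2}\les \|u\|_{L^4}\|\nabla w\|_{L^4}\les C(T)$ by (\ref{4.9}). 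With (\ref{3.4}) this yields $\int_0^T\|(u_t,w_t)\|_{L^2}^2\md t\les C(T)$. Likewise $t\|u_t\|_{L^2}^2\les C t(\|\dot u\|_{L^2}^2+\|\nabla u\|_{H^1}^2)\les C(T)$ by Step 1 and (\ref{3.5}).

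\textbf{Step 3: the bound for $\int_0^T t\|\nabla u_t\|_{L^2}^2\md t$.} We start from
$$
\nabla u_t=\nabla\dot u-\nabla u\cdot\nabla u-u\cdot\nabla^2u.
$$
The terms are handled as follows.
\begin{itemize}
\item[(i)] $\int_0^T t\|\nabla\dot u\|_{L^2}^2\md t\les C(T)$. This comes from (\ref{3.5}) combined with (\ref{2.12}) and (\ref{3.13}): $\|\nabla\dot u\|_{L^2}\les C(\|\nabla^\bot\cdot\dot u\|_{L^2}+\|\dot V\|_{L^2}+\|\nabla u\|_{L^4}^2)$. Note that $\nu\ges1$ makes $\|\dot V\|^2_{L^2}\les\nu\|\dot V\|_{L^2}^2$.
\item[(ii)] The term $\|\nabla u\|_{L^4}^4$ is integrable by (\ref{3.4}).
\item[(iii)] The term $t\|u\|_{L^\infty}^2\|\nabla^2u\|_{L^2}^2\les C\,t\|\nabla u\|_{H^1}^3$.
\end{itemize}
For (iii) we use Step 1: $\sup_t t\|\nabla u\|_{H^1}^2\les C(T)$. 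We then need $\int_0^T\|\nabla u\|_{H^1}\md t$, and this follows from Step 1 and Cauchy--Schwarz. Hence $\int_0^T t\|\nabla u\|_{H^1}^3\md t\les \sup_t\big(t\|\nabla u\|_{H^1}^2\big)\int_0^T\|\nabla u\|_{H^1}\md t\les C(T)$.

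The only delicate point is this weighted cubic term in (iii). If the crude interpolation above turned out insufficient, I would instead bound $\|u\cdot\nabla^2u\|_{L^2}\les\|u\|_{L^4}\|\nabla^2u\|_{L^4}$ and control $\|\nabla^2u\|_{L^4}$ via (\ref{4.14}) and (\ref{4.1}). Either route closes the estimate (\ref{4.19}).
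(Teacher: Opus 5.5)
Your proposal is correct and follows essentially the same route as the paper: the div--curl bound $\|\nabla u\|_{H^1}\les C+C\|\dot u\|_{L^2}$ obtained from $F$, $H$ and \eqref{4.9}, then the decompositions $u_t=\dot u-u\cdot\nabla u$ and $w_t=\dot w-u\cdot\nabla w$, and finally $\nabla u_t=\nabla\dot u-\nabla(u\cdot\nabla u)$, with the first term controlled by \eqref{3.5}. The only cosmetic difference is that the paper bounds $\|\nabla(u\cdot\nabla u)\|_{L^2}$ crudely by $C\|u\|_{H^2}^2$ and closes with $\int_0^T t\|u\|_{H^2}^4\,\md t\les \sup_t\big(t\|u\|_{H^2}^2\big)\int_0^T\|u\|_{H^2}^2\,\md t$, whereas you interpolate $\|u\|_{L^\infty}$ to get a cubic power; both close, so your fallback route via \eqref{4.14} is unnecessary.
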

\begin{proof}
Based upon (\ref{2.12}), (\ref{3.6})$_1$ and (\ref{3.35}), we easily derive from (\ref{3.3}), (\ref{3.4}), (\ref{3.8}), (\ref{3.12}), (\ref{3.37}) and (\ref{4.9}) that
\begin{align*}
\|\nabla u\|_{H^1}&\les C\left(\|\divg u\|_{H^1}+\|\nabla^\bot\cdot u\|_{H^1}\right)\\
&\les C\left(\| H\|_{H^1}+\nu^{-1}\|F\|_{H^1}+\|\rho-\widetilde\rho\|_{H^1}+\|w\|_{H^1}\right)\\
&\les C+C\|\dot u\|_{L^2},
\end{align*}
which, together with (\ref{3.4}) and (\ref{3.5}), yields
\begin{equation}\label{4.20}
\sup_{0\les t\les T}\left(t\|\nabla u(t)\|_{H^1}^2\right)+\int_0^T \|\nabla u\|_{H^1}^2 \md t\les  C(T).
\end{equation}

Since $u_t=\dot u-u\cdot\nabla u$, by (\ref{2.10})  we have
$$
\|u_t\|_{L^2} \les \|\dot{u}\|_{L^2} +\|u\cdot\nabla u\|_{L^2}
\les  \|\dot{u}\|_{L^2} +\|u\|_{H^1}\|\nabla u\|_{H^1},
$$
so that, it follows from (\ref{3.4}) and (\ref{4.20})  that
\begin{equation}\label{4.21}
\sup_{0\les t\les T}\left(t\|u_t(t)\|_{L^2}^2\right)+\int_0^T \|u_t\|_{L^2}^2 \md t\les  C(T).
\end{equation}

Using (\ref{2.10}), (\ref{2.11}), (\ref{2.12}), (\ref{3.4}) and (\ref{3.13}),  we deduce (noting that $V=\divg u$)
\begin{align*}
\|\nabla u_t\|_{L^2}&\les C\left(\|\nabla \dot u\|_{L^2}+\|\nabla (u\cdot \nabla u)\|_{L^2}\right)\\
&\les C \left(\|\nabla^\bot \cdot\dot{u}\|_{L^2}+\|\dot{V}\|_{L^2} +\|\nabla u\|_{L^4}^2 +\|u\|_{H^2}^2\right)\\
&\les C\left(\|\nabla^{\perp}\cdot\dot{u}\|_{L^2}+\|\dot{V}\|_{L^2}+\| u\|_{H^2}^2\right),
\end{align*}
which, combined  with (\ref{3.5})  and (\ref{4.20}), implies
\begin{equation}\label{4.22}
 \int_0^T t\|\nabla u_t\|_{L^2}^2\les C(T).
\end{equation}
In a similar manner,
\begin{align*}
\int_0^T\|w_t\|_{L^2}^2&\les C\int_0^T\left( \|\dot{w}\|_{L^2}^2+\|u\cdot\nabla w\|_{L^2}^2\right)\md t\\
&\les  C\int_0^T\left(\|\dot{w}\|_{L^2}^2+\|u\|_{H^1}^2\|\nabla w\|_{L^4}^2\right)\md t\les C(T),
\end{align*}
which, together with  (\ref{4.20}), (\ref{4.21}) and (\ref{4.22}), leads to \eqref{4.19}.
\end{proof}

\section{Proofs of main results}
\subsection{Proof of Theorem \ref{thm1.1}}\label{sec5.1} Based upon Proposition \ref{3.1}, Lemmas \ref{lem3.1}, \ref{lem3.2} and \ref{lem4.1}--\ref{lem4.3}, we can prove the global existence and uniqueness of strong solutions in a standard way.

\vskip 2mm

\begin{proof}[Proof of Theorem \ref{thm1.1}]
First, since the density is strictly away from vacuum, the local existence of classical solutions with smooth data can be shown as that in \cite{MN1980} (see also \cite{TPZ2021}) by using the standard linearized scheme and the fixed-point argument.

With the local existence result at hand, we then can proceed to prove the density is globally and uniformly bounded from above and below, based upon Proposition \ref{3.1} and the bootstrap argument. Indeed, since $\underline\rho\les \rho_0\les \overline\rho$, it follows from the local existence result and the continuity argument that there exists a positive time $\widetilde T>0$ such that $\underline\rho/4\les \rho(x,t)\les 2\overline\rho$ for all $(x,t)\in\R^2\times[0,\widetilde T]$. Define
\begin{equation}
T^*\triangleq\sup\left\{0\les t\les T\; \left|\; \frac{1}{4}\underline\rho\les \rho(x,t)\les 2\overline\rho, \ \ \forall\  x\in\R^2\right. \right\}.
\label{5.1}
\end{equation}

Obviously,  $T^*\ges \widetilde T$. It now suffices to show that $T^*=\infty$. Otherwise, if $T^*<\infty$, then by Proposition \ref{pro3.1} we know that
$$
\frac{1}{2}\underline\rho\les \rho(x,t)\les \frac{3}{2}\overline\rho\quad {\text{on}}\quad \R^2\times[0,T^*),
$$
provided the bulk viscosity is sufficiently large (i.e. $\nu\ges \nu_0$). So, by Lemmas \ref{lem3.1}, \ref{lem3.2} and \ref{lem4.1}--\ref{lem4.3} we see that the strong solutions $(\rho,u,w)$ exist on $[0,T^*)$. Then, using the local existence result and the continuity argument again, we conclude that there is some $T'>T^*$ such that (\ref{5.1}) is valid on $[0,T']$, which is in contradiction with the definition of $T^*$ in (\ref{5.1}). Thus, it holds that $T^*=\infty$. This, combined with Lemmas \ref{lem3.1}, \ref{lem3.2} and \ref{lem4.1}--\ref{lem4.3}, proves the global existence of strong solutions $(\rho,u,w)$ on $\R^2\times [0,T]$ for any $0<T<\infty$.
Note that the uniqueness of strong solutions can be shown by using the standard $L^2$-method and the details are omitted for simplicity. The proof of   Theorem \ref{thm1.1} is therefore completed.
\end{proof}

\begin{re} Analogously to the arguments in \cite{Danchin2023,Ho1995}, we can also prove the global existence of ``intermediate weak" solutions of the problem \eqref{1.1}--\eqref{1.5}
with discontinuous data $(\rho_0,u_0,w_0)$ satisfying \eqref{3.1}, provided the bulk viscosity is large enough. However, the uniqueness of such weak solutions is still unknown.
\end{re}

\subsection{Proof of Theorem \ref{thm1.2}}\label{sec5.2} Based upon the global uniform bounds established in Sections \ref{sec3}, \ref{sec4} and Aubin-Lions lemma (cf. \cite{JL1969,si1987}), the combined limits as $\nu\to\infty$ and $\varepsilon\to0$ can be justified in the same way as that in \cite{Danchin2023}. So, it remains to derive the convergence rates. To do this, let $(\rho,u,w)$ and $(\eta,v,\chi)$ be the solutions of the problems (\ref{1.1})--(\ref{1.5}) and (\ref{2.1})--(\ref{2.2}) on $\R^2\times[0,T]$, subject to the initial data   $(\rho_0, u_0,w_0)$ and $(\eta_0, v_0,\chi_0)$ with $\eta_0=\rho_0$, $v_0=\mathcal{P}u_0$ and $\chi_0=w_0$, respectively.

\begin{proof}[Proof of Theorem \ref{thm1.2}]  To begin, we first recall that
$$
u= \mathcal{P} u+\mathcal{Q}u\quad \text{with}\quad  \mathcal{P} u\triangleq(\mathbb{Id}+\nabla(-\Delta)^{-1}\divg )u,\quad \mathcal{Q}u\triangleq-\nabla(-\Delta)^{-1}\divg u
$$
and that
\begin{equation}
\begin{cases}
\divg (\mathcal{P}u)=0,\quad  \nabla^\bot\cdot(\mathcal{P}u)=\nabla^\bot\cdot u,\\[1mm]
 \nabla^\bot\cdot(\mathcal{Q}u)=0,\quad  \divg (\mathcal{Q}u)=\divg u.\label{5.2}
\end{cases}
\end{equation}

Using (\ref{2.12}), (\ref{3.4}),  (\ref{3.8}), (\ref{3.13}) and (\ref{4.9}), we know that
\begin{equation}\label{5.3}
\|\nabla (\mathcal{Q}u)(t)\|_{L^2}^2\les C\|\divg u\|_{L^2}^2\les C\nu^{-1},\quad 0\les t\les T,
\end{equation}
and
\begin{equation}\label{5.4}
\begin{aligned}
&\int_0^T\|\nabla(\mathcal{Q}u)\|_{H^1}^2\md t\les C\int_0^T\|\divg u\|_{H^1}^2\md t\\
&\quad \les C\nu^{-2}\int_0^T \left(\| F\|_{L^2}^2+\|\nabla F\|_{L^2}^2+\|P(\rho)-P(\widetilde\rho)\|_{H^1}^2\right)\md t\\
&\quad \les C\nu^{-2}\int_0^T \left(\|F\|_{L^2}^2+\|\dot u\|_{L^2}^2 \right)\md t+ C\nu^{-2}\les C\nu^{-1}.
\end{aligned}
\end{equation}

\vskip 2mm
\underline{\bf Step 1. The difference of the densities}
\vskip 2mm

Let $\rho^*\triangleq \rho^*(x,t)$ be the ``incompressible" density  related to the divergence-free part of $u$ and determined through the transport equation:
\begin{equation}\label{5.5}
 \rho^*_t+\mathcal{P}u\cdot\nabla \rho^*=0,\quad \rho^*|_{t=0}=\rho_0.
\end{equation}

By the divergence-free condition $\divg (\mathcal{P}u)=0$, we infer from (\ref{5.5}) that
\begin{equation}
\underline\rho\les \rho^*\les \overline\rho\quad {\text{and}}\quad \rho^*-\widetilde\rho \in  L^\infty(0, T; L^2\cap  L^\infty).\label{5.6}
\end{equation}
Moreover, by virtue of (\ref{2.12}), (\ref{4.9}) and (\ref{5.2}) we see that
$$
\nabla(\mathcal{P}u)\in L^{1+r}(0,T; W^{1,4})\hookrightarrow L^{1+r}(0,T; L^\infty),\quad\forall\ r\in(0,1/4),
$$
and hence, it is easily derived from (\ref{5.5}) that
\begin{equation}\label{5.7}
\|\nabla\rho^*(t)\|_{L^2}+\|\nabla\rho^*(t)\|_{L^4}\leq C(T),\quad \forall\ 0\les t\les T.
\end{equation}

With $\rho^*=\rho^*(x,t)$ at hand, we decompose the difference of $\rho-\eta$  into two parts,
$$\rho-\eta=(\rho-\rho^*)+(\rho^*-\eta)\triangleq \varphi+\phi\quad\text{with}\quad \varphi \triangleq\rho- \rho^*,\quad \phi\triangleq \rho^*-\eta.$$
Then, it follows from (\ref{1.1})$_1$, (\ref{2.1})$_1$, (\ref{5.2}) and (\ref{5.5}) that
\begin{equation}\label{5.8}
\varphi_t+ \mathcal{P}u\cdot\nabla \varphi=-\divg(\rho\mathcal{Q}u),\quad \varphi|_{t=0}=0
\end{equation}
and
\begin{equation}\label{5.9}
\phi_t+v\cdot\nabla \phi=-(\mathcal{P}u-v)\cdot\nabla \rho^*,\quad \phi|_{t=0}=0.
\end{equation}

Next, we aim to prove some $t$-growth and singular $t$-weighted estimates for $(\varphi,\phi)$. First, multiplying (\ref{5.8})  by $\varphi$  and integrating it by parts over $\R^2$, by (\ref{3.3}), (\ref{4.9}), (\ref{5.2}) and (\ref{5.3}) we deduce that for any $0\les t\les T$,
\begin{align*}
\frac{\md}{\md t} \|\varphi(t)\|_{L^2}^2&\les C\left( \|\rho\divg (\mathcal{Q}u)\|_{L^2}+\|\mathcal{Q}u\cdot\nabla \rho\|_{L^2}\right)\|\varphi\|_{L^2}\\
&\les C\left(\|\divg u\|_{L^2}+\|\mathcal{Q}u\|_{L^{4}}\|\nabla\rho\|_{L^4} \right)\|\varphi\|_{L^2}\\
&\les C \left(\|\divg u\|_{L^2}^2+\|\mathcal{Q}u\|_{L^2}\|\nabla (\mathcal{Q}u)\|_{L^2}+\|\varphi\|_{L^2}^2\right)\\
&\les  C \nu^{-\frac{1}{2}} +C\|\varphi\|_{L^2}^2,
\end{align*}
from which it follows that
\begin{equation}\label{5.10}
\|\varphi(t)\|_{L^2}^2\leq C(T)\nu^{-\frac{1}{2}} t,\quad\forall\ 0\les t\les T.
\end{equation}

Analogously,  multiplying (\ref{5.9})  by $\phi$  and integrating it by parts over $\R^2$, by (\ref{2.4}), (\ref{3.4}), (\ref{5.2}) and (\ref{5.7}) we have
\begin{equation}\label{5.11}
\frac{\md}{\md t}\|\phi(t)\|_{L^2}^2 \les
 C \|\mathcal{P}u-v\|_{L^4}\|\nabla \rho^*\|_{L^4}\|\phi\|_{L^2}\les  C\|\phi\|_{L^2},
\end{equation}
and hence,
\begin{equation}\label{5.12}
 \|\phi(t)\|_{L^2}\les C(T)t,\quad\forall\ 0\les t\les T.
\end{equation}

In terms of (\ref{2.10}) and (\ref{5.7}), we also derive from (\ref{5.11})$_1$ that
\begin{equation}\label{5.13}
\begin{aligned}
\frac{\md}{\md t}\|\phi(t)\|_{L^2}^2
&\les   C \left(\| \mathcal{P}u-v \|_{L^2} +\|\nabla(\mathcal{P}u-v) \|_{L^2}\right)\|\phi\|_{L^2}\\
&\les   C \left(\| \mathcal{P}u-v \|_{L^2}^2+\|\phi\|_{L^2}^2\right)+C\|\nabla(\mathcal{P}u-v) \|_{L^2}^2.
\end{aligned}
\end{equation}
Moreover, multiplying (\ref{5.13})$_1$ by $t^{-1}$, we have by Cauchy-Schwarz inequality that
\begin{align*}
&\frac{\md}{\md t}\left(t^{-1}\|\phi(t)\|_{L^2}^2\right)+ t^{-2}\|\phi\|_{L^2}^2\\
&\quad \les  C \left(\| \mathcal{P}u-v \|_{L^2} +\|\nabla(\mathcal{P}u-v) \|_{L^2}\right)\left(t^{-1}\|\phi\|_{L^2}\right) \\
&\quad \les  \frac{1}{2 t^2} \|\phi\|_{L^2}^2+ C \left(\| \mathcal{P}u-v \|_{L^2}^2 +\|\nabla(\mathcal{P}u-v) \|_{L^2}^2\right),
\end{align*}
which, together with (\ref{5.13}), shows that
\begin{equation}\label{5.14}
\begin{aligned}
&\frac{\md}{\md t}\left(\|\phi(t)\|_{L^2}^2+t^{-1}\|\phi(t)\|_{L^2}^2\right)+\frac{1}{2t^2}\|\phi\|_{L^2}^2\\
&\quad  \les C\left( \| \mathcal{P}u-v \|_{L^2}^2 +\|\phi\|_{L^2}^2\right)+C_1\|\nabla(\mathcal{P}u-v) \|_{L^2}^2.
\end{aligned}
\end{equation}
It is worth mentioning that  (\ref{5.12}) implies that $\|\phi(t)\|_{L^2}^2/t\les C(T) t\to0$ as $t\to0$.

\vskip 2mm

\underline{\bf Step 2. The difference of the velocities}

\vskip 2mm

In view of (\ref{3.6})$_1$ and  (\ref{5.2}), we derive from (\ref{3.20}) that
\begin{equation}\label{5.15}
\begin{aligned}
&\rho^* (\mathcal{P}u)_t+ \rho^* \mathcal{P}u\cdot\nabla(\mathcal{P}u)-(\mu+\zeta) \Delta (\mathcal{P}u)+\nabla \Lambda\\
&\quad=   -\rho (\mathcal{Q}u)_t-\varphi(\mathcal{P}u)_t -\varphi \mathcal{P}u\cdot\nabla (\mathcal{P}u)\\
&\qquad -\rho\mathcal{Q}u\cdot\nabla(\mathcal{P}u)-\rho u\cdot\nabla(\mathcal{Q}u)-2\zeta\nabla^\bot w,
\end{aligned}
\end{equation}
where $\Lambda\triangleq -F$ with $F$ being the ``effective viscous flux" in (\ref{3.6})$_1$. Then, subtracting  (\ref{2.1})$_2$ from (\ref{5.15}) gives
\begin{equation}\label{5.16}
\begin{aligned}
&\eta (\mathcal{P}u-v)_t+\eta v\cdot\nabla(\mathcal{P}u-v)-(\mu+\zeta)\Delta(\mathcal{P}u-v)+\nabla\pi \\
&\quad= -\rho (\mathcal{Q}u)_t-\varphi(\mathcal{P}u)_t-\phi(\mathcal{P}u)_t-\phi\mathcal{P}u\cdot\nabla(\mathcal{P}u)-\eta (\mathcal{P}u-v)\cdot\nabla(\mathcal{P}u)\\
&\qquad  -\varphi \mathcal{P}u\cdot\nabla (\mathcal{P}u) -\rho\mathcal{Q}u\cdot\nabla(\mathcal{P}u)-\rho u\cdot\nabla(\mathcal{Q}u)-2\zeta\nabla^\bot(w-
\chi),
\end{aligned}
\end{equation}
subject to  the vanishing initial condition $(\mathcal{P}u-v)|_{t=0}=0$. Here, $\pi\triangleq \Lambda-\Pi$ with $\Pi$ being the incompressible pressure in (\ref{2.1}).

Next, multiplying (\ref{5.16}) by $\mathcal{P}u-v$ and integrating it by parts over $\R^2$, by (\ref{2.1})$_1$ we obtain
\begin{equation}
\frac{1}{2}\frac{\md}{\md t}\|\sqrt\eta (\mathcal{P}u-v)\|_{L^2}^2+(\mu+\zeta)\|\nabla(\mathcal{P}u-v)\|_{L^2}^2
\triangleq\sum_{i=i}^7 A_i,
\label{5.17}
\end{equation}
where $A_i$ with $i=1,\ldots,7$ is given respectively by
\begin{align*}
A_1&\triangleq -\int\rho(\mathcal{Q}u_t)\cdot (\mathcal{P}u-v)\md x,\quad
A_2 \triangleq -\int (\varphi+\phi)(\mathcal{P}u_t)\cdot (\mathcal{P}u-v)\md x,\\
A_3 &\triangleq -\int (\varphi+\phi)(\mathcal{P}u)\cdot\nabla(\mathcal{P}u)\cdot (\mathcal{P}u-v)\mdx,  \\
A_4& \triangleq -\int \eta (\mathcal{P}u-v)\cdot\nabla(\mathcal{P}u)\cdot (\mathcal{P}u-v)\mdx, \\
A_5&\triangleq -\int \rho(\mathcal{Q}u)\cdot\nabla(\mathcal{P}u)\cdot (\mathcal{P}u-v)\md x,\\
A_6&\triangleq-\int \rho u\cdot\nabla(\mathcal{Q}u)\cdot (\mathcal{P}u-v)\md x,\quad
A_7 \triangleq  2\zeta\int (w-\chi) \nabla^\bot\cdot (\mathcal{P}u-v)\md x.
\end{align*}

We are now in a position of handling each term on the right-hand side of (\ref{5.17}).
First,  noting that  $\mathcal{P} f$ and $\mathcal{Q} g$ are orthogonal for any $f,g\in L^2$, we have
$$
\widetilde\rho\int (\mathcal{Q}u_t) \cdot (\mathcal{P}u-v)\mdx=0
$$
so that, the first term $A_1$ can be written as
\begin{align*}
A_1& = -\int \left(\rho-\widetilde\rho\right) (\mathcal{Q}u_t)  \cdot (\mathcal{P}u-v)\mdx\\
&= -\frac{\md}{\md t}\int \left(\rho-\widetilde\rho\right) (\mathcal{Q}u)  \cdot (\mathcal{P}u-v)\mdx+\tilde A_1
\end{align*}
where
\begin{equation}\label{5.18}
\begin{aligned}
\tilde A_1& \triangleq \int\rho_t(\mathcal{Q}u)  \cdot (\mathcal{P}u-v)\mdx+\int \left(\rho-\widetilde\rho\right) (\mathcal{Q}u)  \cdot (\mathcal{P}u_t-v_t)\mdx\\
& \les  C\|\mathcal{Q}u\|_{L^4}\left(\|\rho_t\|_{L^2}\|\mathcal{P}u-v\|_{L^4}+\|\rho-\widetilde\rho\|_{L^4}\|\mathcal{P}u_t-v_t\|_{L^2}\right)\\
& \les  C\|\mathcal{Q}u\|_{L^2}^\frac{1}{2}\|\nabla\mathcal{Q}u\|_{L^2}^\frac{1}{2}\left( \|\mathcal{P}u-v\|_{L^2}^\frac{1}{2} \|\nabla(\mathcal{P}u-v)\|_{L^2}^\frac{1}{2}+ \|(u_t,v_t)\|_{L^2}\right)\\
&\les \frac{\mu+\zeta}{12}\|\nabla(\mathcal{P}u-v)\|_{L^2}^2+  C\|\mathcal{P}u-v\|_{L^2}^2+C \nu^{-\frac{1}{4}} \|(u_t,v_t)\|_{L^2} +C\nu^{-\frac{1}{2}},
\end{aligned}
\end{equation}
where we have used (\ref{2.10}), (\ref{3.3}), (\ref{3.4}), (\ref{4.9}) and (\ref{5.3}). As a result,
\begin{align*}
A_1&\les  -\frac{\md}{\md t}\int \left(\rho-\widetilde\rho\right) (\mathcal{Q}u)  \cdot (\mathcal{P}u-v)\mdx+\frac{\mu+\zeta}{12}\|\nabla(\mathcal{P}u-v)\|_{L^2}^2\\
&\quad+ C \|\mathcal{P}u-v\|_{L^2}^2+C\nu^{-\frac{1}{4}} \|(u_t,v_t)\|_{L^2} +C\nu^{-\frac{1}{2}}.
\end{align*}

Next, using (\ref{2.10}), (\ref{3.4}) and (\ref{5.10}), we deduce
\begin{align*}
A_2&\les C\left(\|\varphi\|_{L^2}+\|\phi\|_{L^2}\right)\|\mathcal{P}u_t\|_{L^4}\|\mathcal{P}u-v\|_{L^4}\\
&\les Ct^{-\frac{1}{2}} \left(  \|\varphi\|_{L^2}+ \|\phi\|_{L^2}\right)\left(\sqrt t\|u_t\|_{H^1}\right)\|\mathcal{P}u-v\|_{L^2}^\frac{1}{2}\|\nabla(\mathcal{P}u-v)\|_{L^2}^\frac{1}{2}\\
&\les \frac{\mu+\zeta}{12}\|\nabla(\mathcal{P}u-v)\|_{L^2}^2+C\|\mathcal{P}u-v\|_{L^2}^2+C\left(t\|u_t\|_{H^1}^2\right)\left(t^{-1}\|\phi\|_{L^2}^2+\nu^{-\frac{1}{2}}\right),
\end{align*}
and analogously,
\begin{align*}
A_3&\les C\left(\|\varphi\|_{L^2}+\|\phi\|_{L^2}\right)\|\mathcal{P}u\|_{H^1}\|\nabla(\mathcal{P}u)\|_{H^1}\|\mathcal{P}u-v\|_{L^4}\\
&\les  \frac{\mu+\zeta}{12}\|\nabla(\mathcal{P}u-v)\|_{L^2}^2+C\|\mathcal{P}u-v\|_{L^2}^2  +C\|\nabla u\|_{H^1}^2\left( \|\phi\|_{L^2}^2  +\nu^{-\frac{1}{2}}\right).
\end{align*}

By virtue of (\ref{2.4}) and (\ref{2.11}), we find
$$
A_4\les C\|\nabla(\mathcal{P}u)\|_{L^\infty}\|\mathcal{P}u-v\|_{L^2}^2
\les C\|\nabla u\|_{W^{1,4}}\|\mathcal{P}u-v\|_{L^2}^2.
$$

In terms of (\ref{2.10}), (\ref{3.4}) and (\ref{5.3}), we have
\begin{align*}
A_5&\les C\|\mathcal{Q}u\|_{L^4}\|\nabla (\mathcal{P}u)\|_{L^2}\|\mathcal{P}u-v\|_{L^4}\\
&\les C\|\mathcal{Q}u\|_{L^2}^\frac{1}{2}\|\nabla(\mathcal{Q}u)\|_{L^2}^\frac{1}{2}\|\mathcal{P}u-v\|_{L^2}^\frac{1}{2}\|\nabla(\mathcal{P}u-v)\|_{L^2}^\frac{1}{2}\\
&\les  \frac{\mu+\zeta}{12}\|\nabla(\mathcal{P}u-v)\|_{L^2}^2+C\|\mathcal{P}u-v\|_{L^2}^2+C\nu^{-\frac{1}{2}},
\end{align*}
and similarly,
\begin{align*}
A_6&\les C\|u\|_{L^4}\|\nabla(\mathcal{Q}u)\|_{L^2} \|\mathcal{P}u-v\|_{L^4}\\
&\les  \frac{\mu+\zeta}{12}\|\nabla(\mathcal{P}u-v)\|_{L^2}^2+C\|\mathcal{P}u-v\|_{L^2}^2+C\nu^{-1}.
\end{align*}

Finally, it is easily seen that
$$
A_7\les \frac{\mu+\zeta}{12}\|\nabla(\mathcal{P}u-v)\|_{L^2}^2+C\|w-\chi\|_{L^2}^2.
$$

Thus, putting the estimates of $A_i$ ($i=1,\ldots t$) into (\ref{5.17}), we arrive at
\begin{equation}
\begin{aligned}
& \frac{\md}{\md t}\|\sqrt\eta (\mathcal{P}u-v)\|_{L^2}^2+(\mu+\zeta)\|\nabla(\mathcal{P}u-v)\|_{L^2}^2\\
&\quad\les A_0'(t)  +C\left(1+\|\nabla u\|_{W^{1,4}}\right)\| \mathcal{P}u-v\|_{L^2}^2  + C\|w-\chi\|_{L^2}^2 \\
&\qquad +C\left(\|\phi\|_{L^2}^2+t^{-1}\|\phi\|_{L^2}^2\right)\left(\|\nabla u\|_{H^1}^2+t\|u_t\|_{H^1}^2\right) +C\nu^{-\frac{1}{2}}\\
&\qquad+C\nu^{-\frac{1}{2}}\left(\|\nabla u\|_{H^1}^2+t\|u_t\|_{H^1}^2\right)+C \nu^{-\frac{1}{4}} \|(u_t,v_t)\|_{L^2},
\end{aligned}\label{5.19}
\end{equation}
where
$$
A_0\triangleq -2 \int(\rho-\widetilde\rho)(\mathcal{Q}u)\cdot (\mathcal{P}u-v)\md x.
$$

\vskip 2mm

\underline{\bf Step 3. The difference of the micro-rotational velocities}

\vskip 2mm

Let  $\dot\chi\triangleq\chi_t+v\cdot\nabla\chi$. Then, subtracting (\ref{2.1})$_3$ from (\ref{1.1})$_3$ gives
\begin{align*}
&\rho(w-\chi )_t+\rho u\cdot\nabla(w-\chi)+4\zeta(w-\chi)-\varepsilon\Delta(w-\chi)\\
&\quad =\varepsilon\Delta \chi-(\varphi+\phi)\dot \chi-\rho(\mathcal{P}u-v)\cdot\nabla\chi-\rho\mathcal{Q}u\cdot\nabla\chi\\
&\qquad +2\zeta\nabla^{\bot}\cdot(\mathcal{P}u-v)+2\zeta\nabla^{\bot}\cdot(\mathcal{Q}u),
\end{align*}
which, multiplied by  $w-\chi$ and integrated by parts over $\R^2$, yields
\begin{equation}
\begin{aligned}
&\frac{1}{2}\frac{\md}{\md t}\|\sqrt\rho (w-\chi)\|_{L^2}^2+4\zeta \|w-\chi \|_{L^2}^2+\varepsilon\|\nabla(w-\chi )\|_{L^2}^2\\
&\quad =- \varepsilon\int \nabla \chi \cdot\nabla (w-\chi )\md x-\int \dot\chi(\varphi+\phi) (w-\chi )  \md x\\
&\qquad -\int \rho (w-\chi ) (\mathcal{P}u-v)\cdot\nabla \chi \md x-\int \rho (w-\chi ) (\mathcal{Q}u)\cdot\nabla \chi \md x\\
&\qquad  +2\zeta\int (w-\chi)\nabla^{\bot}\cdot(\mathcal{P}u-v)\mdx+2\zeta\int (w-\chi )  \nabla^{\bot}\cdot(\mathcal{Q}u)\md x\\
&\quad\triangleq B_1+\ldots+B_6.
\end{aligned}\label{5.20}
\end{equation}

Thanks to   (\ref{2.4}) and (\ref{5.10}), the first two terms $B_1$ and $B_2$ can be bounded by
\begin{align*}
& B_1+B_2\les C\varepsilon\|\nabla\chi\|_{L^2}\|\nabla(w-\chi)\|_{L^2}+C\|\dot\chi\|_{L^\infty} \left(\|\varphi\|_{L^2}+\|\phi\|_{L^2}\right) \|w-\chi\|_{L^2}\\
 &\quad \les \frac{\varepsilon}{2}\|\nabla(w-\chi)\|_{L^2}^2 + C\|\dot\chi\|_{L^\infty}\left(\|w-\chi\|_{L^2}^2+\|\phi\|_{L^2}^2\right)+C\varepsilon+C\nu^{-\frac{1}{2}}\|\dot\chi\|_{L^\infty}.
 \end{align*}

Using (\ref{2.4}), (\ref{2.10}),  (\ref{3.3}), (\ref{3.4}) and (\ref{5.3}), we have
\begin{align*}
&B_3+B_4
\les C\|w-\chi\|_{L^2}\left(\|\mathcal{P}u-v\|_{L^4}+\|\mathcal{Q}u\|_{L^4}\right)\|\nabla\chi\|_{L^4}\\
&\quad \les C\|w-\chi\|_{L^2}\left(\|\mathcal{P}u-v\|_{L^2}^\frac{1}{2}\|\nabla(\mathcal{P}u-v)\|_{L^2}^\frac{1}{2}
+\|\mathcal{Q}u\|_{L^2}^\frac{1}{2}\|\nabla\mathcal{Q}u\|_{L^2}^\frac{1}{2}\right)\\
&\quad \les \frac{\mu+\zeta}{8} \|\nabla(\mathcal{P}u-v)\|_{L^2}^2+C\left(\|w-\chi\|_{L^2}^2+\|\mathcal{P}u-v\|_{L^2}^2\right)+C\nu^{-\frac{1}{2}}.
\end{align*}

Since $\nabla^\bot\cdot (\mathcal{Q}u)=0$ due to (\ref{5.2}), it holds that $B_6=0$. Hence,
$$
B_5+B_6 \les  \frac{\mu+\zeta}{8} \|\nabla(\mathcal{P}u-v)\|_{L^2}^2+C \|w-\chi\|_{L^2}^2.
$$

Thus, inserting the above estimates of $B_i$ ($i=1,\ldots,6$) into (\ref{5.20}), we get
\begin{equation}
\begin{aligned}
&\frac{\md}{\md t}\|\sqrt\rho (w-\chi)\|_{L^2}^2+\zeta \|w-\chi \|_{L^2}^2+\varepsilon\|\nabla(w-\chi )\|_{L^2}^2\\
&\quad\les  \frac{\mu+\zeta}{2} \|\nabla(\mathcal{P}u-v)\|_{L^2}^2+ C\left(\|\mathcal{P}u-v\|_{L^2}^2+ \|w-\chi\|_{L^2}^2\right)\\
&\qquad+C\|\dot\chi\|_{L^\infty}\left(\|w-\chi\|_{L^2}^2+\|\phi\|_{L^2}^2 \right) + C\nu^{-\frac{1}{2}}\left(1+\|\dot\chi\|_{L^\infty}\right)+C\varepsilon.
\end{aligned}\label{5.21}
\end{equation}

\vskip 2mm

\underline{\bf Step 4. Proving the convergence rate}

\vskip 2mm

With (\ref{5.14}), (\ref{5.19}) and (\ref{5.21}) at hand, we are now ready to prove the convergence rate of the combined limits as $\nu\to\infty$ and $\varepsilon\to0$. First, it follows from (\ref{5.19}) and (\ref{5.21}) that
\begin{equation}
\begin{aligned}
& \frac{\md}{\md t}\left(\|\sqrt\eta (\mathcal{P}u-v)\|_{L^2}^2+\|\sqrt\rho (w-\chi)\|_{L^2}^2\right)\\
&\qquad+\frac{\mu+\zeta}{2}\|\nabla(\mathcal{P}u-v)\|_{L^2}^2+\zeta \|w-\chi \|_{L^2}^2+\varepsilon\|\nabla(w-\chi )\|_{L^2}^2\\
&\quad\les A_0'(t)  +C\left(\| \mathcal{P}u-v\|_{L^2}^2  + \|w-\chi\|_{L^2}^2 \right)\left(1+\|\nabla u\|_{W^{1,4}}+\|\dot \chi\|_{L^\infty}\right)\\
&\qquad +C\left(\|\phi\|_{L^2}^2+t^{-1}\|\phi\|_{L^2}^2\right)\left(\|\nabla u\|_{H^1}^2+t\|u_t\|_{H^1}^2+\|\dot\chi\|_{L^\infty}\right)+C\varepsilon\\
&\qquad+C\nu^{-\frac{1}{2}}\left(1+\|\nabla u\|_{H^1}^2+t\|u_t\|_{H^1}^2+\|\dot\chi\|_{L^\infty}\right)+C \nu^{-\frac{1}{4}} \|(u_t,v_t)\|_{L^2},
\end{aligned}\label{5.22}
\end{equation}
with $A_0$ being the same one as in (\ref{5.19}). Next, multiplying (\ref{5.22}) by a suitably large number $M \triangleq 4C_1/(\mu+\zeta)$ and adding it to (\ref{5.14}), by (\ref{3.3}) we obtain
\begin{equation}\label{5.23}
\mathcal{E}'(t)+\mathcal{F}(t) \leq M A_0'(t)+ C
\mathcal{G}(t)\mathcal{E}(t) +C\nu^{-\frac{1}{4}}\mathcal{G}(t)
+C\varepsilon,
\end{equation}
due to the fact that $\nu\geq 1$. Here,
\begin{align*}
\mathcal{E}(t) &\triangleq M\left(\|\sqrt\eta (\mathcal{P}u-v)\|_{L^2}^2+\|\sqrt\rho (w-\chi)\|_{L^2}^2\right)+\|\phi\|_{L^2}^2+t^{-1}\|\phi\|_{L^2}^2,\\
\mathcal{F}(t) &\triangleq C_1\|\nabla(\mathcal{P}u-v)\|_{L^2}^2+M\left(\zeta \|w-\chi \|_{L^2}^2+\varepsilon\|\nabla(w-\chi )\|_{L^2}^2\right)+\frac{1}{2t^2}\|\phi\|_{L^2}^2,
\end{align*}
and
$$
\mathcal{G}(t) \triangleq 1+\|\nabla u\|_{H^1}^2+t\|u_t\|_{H^1}^2+\|(u_t,v_t)\|_{L^2}+\|\nabla u\|_{W^{1,4}}+\|\dot \chi\|_{L^\infty}.
$$

It follows from Lemma \ref{lem2.1}, (\ref{3.3}), (\ref{3.4}) and (\ref{5.3}) that for any $0\les t\les T$,
\begin{equation}\label{5.24}
\begin{aligned}
MA_0(t)&\leq C\|\rho-\widetilde\rho \|_{L^4}\|\mathcal{Q}u\|_{L^4}\|\mathcal{P}u-v\|_{L^2}\\
&\leq C\|\mathcal{Q}u\|_{L^2}^\frac{1}{2}\|\nabla(\mathcal{Q}u)\|_{L^2}^\frac{1}{2}\|\sqrt\eta(\mathcal{P}u-v)\|_{L^2}\\
&\leq \frac{M}{2}\|\sqrt\eta(\mathcal{P}u-v)\|_{L^2}^2+C\nu^{-\frac{1}{2}}.
\end{aligned}
\end{equation}
Moreover, thanks to Lemma \ref{lem2.1} again, we infer from (\ref{2.1})$_3$ that
$$
\|\dot \chi\|_{L^\infty}\les C\left(\|\nabla v\|_{L^\infty}+\|\chi\|_{L^\infty}\right)\in L^1(0,T),
$$
from which, (\ref{2.4}), (\ref{4.9}) and (\ref{4.19}), it follows that $
\mathcal{G}(t)\in L^1(0,T)$. Hence, by (\ref{5.24}) we conclude from (\ref{5.23}) and Gronwall inequality that  for any $0\les t\les T$,
$$
\mathcal{E}(t)+\int_0^t\mathcal{F}(s)\md s\leq C(T)\left(\nu^{-\frac{1}{4}}+\varepsilon\right),
$$
This, together with Lemma \ref{lem2.1}, (\ref{3.3}) and (\ref{5.10}), proves (\ref{1.12}). Here, we have also used the fact that $\|\varphi(t)\|_{L^2}^2,t^{-1}\|\phi(t)\|_{L^2}^2\to0$ as $t\to0$, due to (\ref{5.10}) and (\ref{5.12}). Therefore, combining this with (\ref{5.3}) and (\ref{5.4}) finishes the proof of  Theorem \ref{thm1.2}.
\end{proof}

\subsection{Proof of Theorem \ref{thm1.3}} \label{sec5.3} This subsection aims to improve the convergence rate of the global solutions $(\rho,u,w)$ as $\nu\to\infty$, under the condition that  $\rho_0-\widetilde\rho$ decays sufficiently fast to zero at infinity (see (\ref{1.15})). We start with the proof of (\ref{1.16}).

\vskip 2mm

\begin{lem}\label{lem5.1} In addition to \eqref{2.3}, assume that for some $1/2<\theta<\infty$,
\begin{equation}\label{5.25}
\bar x^\theta(\eta_0-\widetilde\rho)\in L^4\;\; {\text{with}}\;\; \bar x\triangleq (e+|x|^2)^{\frac{1}{2}}\log^2 (e+|x|^2).
\end{equation}
Then, besides \eqref{2.4}, it holds that
\begin{equation}\label{5.26}
\|\bar x^\theta(\eta-\widetilde\rho)(t)\|_{L^4}\leq C,\quad\forall\ 0\les t\les T.
\end{equation}
\end{lem}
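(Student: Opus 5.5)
Our plan is a weighted $L^4$ energy estimate for the transport equation $(2.1)_1$, using only that $v$ is divergence free and has the regularity listed in \eqref{2.4}. Since $\widetilde\rho$ is constant and $\divg v=0$, the function $f\triangleq\eta-\widetilde\rho$ satisfies $f_t+v\cdot\nabla f=0$. We multiply this equation by $4\bar x^{4\theta}|f|^2 f$ and integrate over $\R^2$. The transport term $\int \bar x^{4\theta} v\cdot\nabla |f|^4\,\mdx$ is integrated by parts. Because $\divg v=0$, the only surviving contribution is the one where the derivative falls on the weight:
\begin{equation*}
\frac{\md}{\md t}\|\bar x^\theta f\|_{L^4}^4=\int |f|^4\, v\cdot\nabla \bar x^{4\theta}\,\mdx .
\end{equation*}
For rigor one first works with a truncated weight $\bar x_R=\min\{\bar x,R\}$, or with a cutoff function, and then lets $R\to\infty$ using monotone convergence. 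Since $f\in L^\infty(0,T;L^4)$, the truncated quantities are finite.

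The key pointwise bound is $|\nabla\bar x|\le C\log^2(e+|x|^2)\le C\bar x\,(e+|x|^2)^{-1/2}$. This gives $|\nabla\bar x^{4\theta}|\le C\bar x^{4\theta}(e+|x|^2)^{-1/2}\le C\bar x^{4\theta}$, and the same holds uniformly for the truncated weight. Consequently
\begin{equation*}
\frac{\md}{\md t}\|\bar x^\theta f\|_{L^4}^4\le C\|v\|_{L^\infty}\|\bar x^\theta f\|_{L^4}^4 .
\end{equation*}
It remains to show that $\|v\|_{L^\infty}\in L^1(0,T)$. This follows from \eqref{2.11} with $q=2$, $r=4$: we get $\|v\|_{L^\infty}\le C\|v\|_{L^2}^{1/3}\|\nabla v\|_{L^4}^{2/3}$, and we control $\|\nabla v\|_{L^4}\le C\|\nabla v\|_{H^1}$ using $v\in L^2(0,T;H^2)$ from \eqref{2.4}. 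Alternatively, $\nabla v\in L^{1+r}(0,T;W^{1,4})$ together with $v\in L^\infty(0,T;H^1)$ gives the same conclusion. Gronwall's inequality then yields \eqref{5.26} with $C=C(T)\|\bar x^\theta(\eta_0-\widetilde\rho)\|_{L^4}$.

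The only delicate point we expect is justifying the integration by parts with a weight that grows at infinity. We would carry it out with the truncated weight $\bar x_R$: its gradient is supported where $\bar x<R$ and satisfies the same bound $|\nabla \bar x_R|\le C\bar x_R$. The resulting bound is uniform in $R$, and passing $R\to\infty$ gives the claim. The condition $\theta>1/2$ is not needed for this lemma; it enters only later, through the Hardy-type inequality used for \eqref{1.18}.
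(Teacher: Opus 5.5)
Your proposal is correct and follows essentially the same route as the paper: a weighted $L^4$ estimate for the transport equation $(2.1)_1$, where $\divg v=0$ moves the derivative onto the weight, followed by the bound $|\nabla \bar x^{4\theta}|\le C\bar x^{4\theta}$ and Gronwall's inequality with $\|v\|_{L^\infty}\in L^1(0,T)$ from \eqref{2.4}. Your truncated-weight justification, your explicit check that $\|v\|_{L^\infty}\in L^1(0,T)$ via \eqref{2.11}, and your remark that $\theta>1/2$ is not used here fill in details the paper leaves implicit.
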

\begin{proof} Indeed, multiplying (\ref{2.1})$_1$ by $\bar x^{4\theta}|\eta-\widetilde \rho|^2(\eta-\widetilde\rho)$ and integrating it by parts, we obtain by the divergence-free condition $\divg v=0$ that
$$
\frac{\md}{\md t}\|\bar x^\theta (\eta-\widetilde\rho)\|_{L^4}^4\les C\|v\|_{L^\infty}\|\bar x^\theta(\eta-\widetilde\rho)\|_{L^4}^4,
$$
which, combined with (\ref{2.4}), (\ref{5.25}) and Gronwall inequality, leads to (\ref{5.26}).
\end{proof}

\vskip 2mm

With Lemma \ref{lem5.1} at hand, we are now ready to prove Theorem \ref{thm1.3}.

\vskip 2mm

\begin{proof}[Proof of Theorem \ref{thm1.3}] From the arguments in Subsection \ref{sec5.2}, we see that it only remains to reconsider the term $\tilde A_1$ given in (\ref{5.18}). For this purpose, recalling that $\rho=\eta+\varphi+\phi$, we have
\begin{equation}\label{5.27}
\begin{aligned}
 \tilde A_1 &= \int\rho_t(\mathcal{Q}u)  \cdot (\mathcal{P}u-v)\mdx+\int \left(\rho-\widetilde\rho\right) (\mathcal{Q}u)  \cdot (\mathcal{P}u_t-v_t)\mdx\\
& = \int\rho_t(\mathcal{Q}u)  \cdot (\mathcal{P}u-v)\mdx+\int \left(\varphi+ \phi\right) (\mathcal{Q}u)  \cdot (\mathcal{P}u_t-v_t)\mdx \\
&\quad +\int \left(\eta-\widetilde\rho\right) (\mathcal{Q}u)  \cdot (\mathcal{P}u_t-v_t)\mdx\triangleq \tilde A_{1,1}+\tilde A_{1,2}+\tilde A_{1,3},
\end{aligned}
\end{equation}
in which the first term on the right-hand side can be bounded by
\begin{align*}
\tilde A_{1,1}& \les  C\|\mathcal{Q}u\|_{L^4} \|\rho_t\|_{L^2}\|\mathcal{P}u-v\|_{L^4} \\
& \les  C\|\mathcal{Q}u\|_{L^2}^\frac{1}{2}\|\nabla\mathcal{Q}u\|_{L^2}^\frac{1}{2} \|\mathcal{P}u-v\|_{L^2}^\frac{1}{2} \|\nabla(\mathcal{P}u-v)\|_{L^2}^\frac{1}{2} \\
& \les \frac{\mu+\zeta}{12}\|\nabla(\mathcal{P}u-v)\|_{L^2}^2+  C\|\mathcal{P}u-v\|_{L^2}^2+ C\nu^{-\frac{1}{2}},
\end{align*}
where we have used (\ref{2.10}), (\ref{3.3}), (\ref{3.4}) and (\ref{5.3}). For the second term $\tilde A_{1,2}$, by (\ref{5.3}) and (\ref{5.10}) we find
\begin{align*}
\tilde A_{1,2}& \les  C\|\mathcal{Q}u\|_{L^4} \left(\|\varphi\|_{L^2}+\|\phi\|_{L^2}\right)\left(\|u_t\|_{L^4}+\|v_t\|_{L^4}\right) \\
& \les  C\|\mathcal{Q}u\|_{L^2}^\frac{1}{2}\|\nabla(\mathcal{Q}u)\|_{L^2}^\frac{1}{2}  \left(\|\varphi\|_{L^2}+\|\phi\|_{L^2}\right)\|(u_t,v_t)\|_{H^1} \\
& \les C\nu^{-\frac{1}{4}}\|\varphi\|_{L^2}\|(u_t,v_t)\|_{H^1} +C\nu^{-\frac{1}{4}} \|\phi\|_{L^2} \|(u_t,v_t)\|_{H^1}\\
&\les C\nu^{-\frac{1}{2}} \sqrt t \|(u_t,v_t)\|_{H^1}+ Ct^{-1}\|\phi\|_{L^2}^2+ C\nu^{-\frac{1}{2}}\left(  t \|(u_t,v_t)\|_{H^1}^2\right).
\end{align*}

The treatment of the third term $\tilde A_{1,3}$ depends on the application of Lemma \ref{lem5.1} and Hardy inequality on the 2D whole domain (see, e.g., \cite[Sec. 1.4]{Ladyzhenskaya}). Indeed, let $\dot W^{1,4/3}(\R^2)$ be the standard homogeneous Sobolev space. Then, it follows from  Sobolev embedding inequality $\dot W^{1,4/3}(\R^2)\hookrightarrow L^4(\R^2)$, H\"{o}lder inequality and Hardy inequality on $\R^2$ that for any $\Phi\in \dot W^{1,4/3}$ and $1/2<\theta<\infty$,
\begin{equation}
\begin{aligned}
\|\bar x^{-\theta} \Phi\|_{L^4}&\les C\|\nabla(\bar x^{-\theta} \Phi)\|_{L^\frac{4}{3}}\\
&\les C\|\bar x^{-\theta}\nabla \Phi\|_{L^{\frac{4}{3}}} +C\|\Phi \bar x^{-\theta-1}\nabla \bar x \|_{L^\frac{4}{3}}\\
&\les C\|\nabla \Phi\|_{L^2}\|\bar x^{-\theta}\|_{L^4}+C\|\bar x^{-1}\Phi\|_{L^2}\|\bar x^{-\theta} \nabla\bar x\|_{L^4}\\
&\les C\|\nabla\Phi\|_{L^2}.
\end{aligned}\label{5.28}
\end{equation}
Thus, it follows from (\ref{5.10}), (\ref{5.26}) and  (\ref{5.28}) that
\begin{align*}
\tilde A_{1,3}&\les C\|\bar x^\theta(\eta-\widetilde\rho)\|_{L^4}\|\bar x^{-\theta} (\mathcal{Q}u)\|_{L^4}\left(\|u_t\|_{L^2}+\|v_t\|_{L^2}\right) \\
&\les C\|\nabla(\mathcal{Q}u)\|_{L^2}\left(\|u_t\|_{L^2}+\|v_t\|_{L^2}\right)\\
&\les C\nu^{-\frac{1}{2}}\left(\|u_t\|_{L^2}+\|v_t\|_{L^2}\right).
\end{align*}

Putting the estimates of $\tilde A_{1,i}$ with $i=1,2,3$ into (\ref{5.27}) and combining it with $A_1$ in Subsection \ref{sec5.2}, we obtain the following refined estimate:
\begin{equation}\label{5.29}
\begin{aligned}
A_1&\les  -\frac{\md}{\md t}\int \left(\rho-\widetilde\rho\right) (\mathcal{Q}u)  \cdot (\mathcal{P}u-v)\mdx+\frac{\mu+\zeta}{12}\|\nabla(\mathcal{P}u-v)\|_{L^2}^2\\
&\quad+ C\left( \|\mathcal{P}u-v\|_{L^2}^2+t^{-1}\|\phi\|_{L^2}^2\right)+ C\nu^{-\frac{1}{2}}\left(  t \|(u_t,v_t)\|_{H^1}^2\right) \\
&\quad+C\nu^{-\frac{1}{2}} \left(\|(u_t,v_t)\|_{L^2} +\sqrt t\|(u_t,v_t)\|_{H^1}\right)+C\nu^{-\frac{1}{2}} .
\end{aligned}
\end{equation}
With (\ref{5.29}) at hand, we then can prove (\ref{1.17}) by repeating the proof of Theorem \ref{thm1.2} step by step. Moreover, based upon  (\ref{5.26}), (\ref{5.28}) with $\Phi$ being replaced by $\mathcal{Q}u$ and  Hardy inequality, we arrive at
(\ref{1.16}) and (\ref{1.18}). The proof of Theorem \ref{thm1.3} is therefore finished.
\end{proof}
\vskip 4mm

\noindent {\bf Acknowledgments}.
S. Liu was partly supported by the National Natural Science Foundation of China (Grant No. 12226347) and the Natural Science Foundation of Liaoning Province
(Grant No. 2023-MS-137). J. Zhang was partially supported by  the National Natural Science Foundation of China (Grant Nos. 12631009, 12471226, 12226344, 12131007, 12071390).

\vskip 2mm

\noindent{\bf Data Availability.} Data sharing not applicable to this article as no data-sets were
generated or analyzed during the current study.

\vskip 2mm

\noindent{\bf Declarations}
\vskip 2mm

\noindent{\bf Conflict of interest.} On behalf of all authors, the corresponding author states that there is no conflict of interest.


\vskip .1in

\begin{thebibliography}{00}


\bibitem{Amirat}
Y. Amirat, K. Hamdache, Weak solutions to the equations of motion for compressible magnetic fluids, J. Math. Pures Appl. 91 (2009) 433-467.

\bibitem{AKM1990}
S. A. Antontsev, A. V. Kazhikov, V. N. Monakhov, Boundary Value Problems in Mechanics of Nonhomogeneous Fluids. North-Holland, Amsterdam, 1990.


\bibitem{BRF2003}
J. Boldrini, M. A. Rojas-Medar, E. Fern\'{a}ndez-Cara, Semi-Galerkin approximation and strong solutions to the equations of the nonhomogeneous asymmetric fluids, J. Math. Pures Appl. (9) 82 (11) (2003) 1499-1525.


\bibitem{Brezis1974}
J. P. Bourguignon, H. Brezis, Remarks on the Euler equation, J. Funct. Anal. 15 (1974) 341-363.

\bibitem{Chen2015}
M. T. Chen, X. Y. Xu, J. W. Zhang, Global weak solutions of 3D compressible micropolar fluids with discontinuous initial data and
vacuum, Commun. Math. Sci. 13 (1) (2015) 225-247.




\bibitem{CZ2019}
Z. M. Chen, X. P. Zhai, Global large solutions and incompressible limit for the compressible Navier-Stokes equations, J. Math. Fluid Mech. 21 (2019), no. 2, Paper No. 26, 23 pp.

\bibitem{Coifman1993} R. Coifman, P. L. Lions, Y. Meyer, S. Semmes, Compensated compactness and Hardy spaces, J. Math. Pures Appl. 72 (1993) 247-286.

\bibitem{Meyer1975}
R. R. Coifman, Y. Meyer, On commutators of singular integrals and bilinear singular integrals,  Trans. Am. Math.Soc. 212 (1975) 315-331.

\bibitem{DM2017}
R. Danchin, P. Mucha, Compressible Navier-Stokes system: large solutions and incompressible limit, Adv. Math. 320 (2017) 904-925.


\bibitem{DM2019}
R. Danchin, P. B. Mucha, From compressible to incompressible inhomogeneous flows in the case of large data, Tunis. J. Math. 1 (2019) 127-149.

\bibitem{Danchin2023}
R. Danchin, P. Mucha, Compressible Navier-Stokes equations with ripped density, Commun. Pure Appl. Math. 76 (11) (2023)
3437-3492.



\bibitem{DZ2010}
B. Q. Dong, Z. F. Zhang, Global regularity of the 2D micropolar fluid flows with zero angular viscosity, J. Differ.
Equ. 249 (2010) 200-213.


\bibitem{Eringen1964}
A. C. Eringen, Simple microfluids, Int. J. Eng. Sci. 2 (1964) 205-217.

\bibitem{Eringen1966}
A. C. Eringen, Theory of micropolar fluids, J. Math. Mech. 16 (1966) 1-18.


\bibitem{Fe2004}
E. Feireisl, Dynamics of Viscous Compressible Fluids, Oxford University Press, Oxford, 2004.

\bibitem{FNP2001}
E. Feireisl, A. Novotn\'{y}, H. Petzeltov\'{a}, On the global existence of globally defined weak solutions to the Navier-Stokes equations of isentropic compressible fluids,  J. Math. Fluid Mech. 3 (2001) 358-392.

\bibitem{Galdi}
G. P. Galdi, An Introduction to the Mathematical Theory of Navier-Stokes Equations, Vol. I: Linearized Steady Problems. Springer Tracts in Natural Philosophy, Springer-Verlag, New York, 38, 1994.



\bibitem{Ho1995}
D. Hoff, Global solutions of the {N}avier-{S}tokes equations for multidimensional compressible flow with discontinuous initial data, J. Differ. Equ. 120 (1) (1995) 215-254.

\bibitem{Huang}
X. D. Huang, J. Li, Z. P. Xin,  Serrin-type criterion for the three-dimensional viscous compressible flows,  SIAM J. Math. Anal. 43 (4) (2011) 1872-1886.

\bibitem{Kato}
T. Kato, Remarks on the Euler and Navier-Stokes equations in $\mathbb{R}^2$, Proc. Symp. Pure Math. 45 (1986) 1-7.



\bibitem{Ladyzhenskaya}
O. A. Ladyzhenskaya, The Mathematical Theory of Viscous Incompressible Flow, 2nd ed., Gordon and Breach, New York, 1969.

\bibitem{LXZ2026} F. B. Lin, X. Y. Xu, J. W. Zhang, The vanishing limit of angular viscosity of the 2D micropolar equations for nonhomogeneous incompressible fluids on bounded domain, J. Differ. Equ. 477 (2026) Art. 114594.

\bibitem{JL1969}
J. L. Lions, Quelques m${\rm\acute{e}}$thodes de r${\rm\acute{e}}$solution de problemes aux limites non lin${\rm\acute{e}}$aires, Dunod, Paris; Gauthier-Villars, Paris, 1969, xx+554 pp.


\bibitem{Lions1996}
P. L. Lions, Mathematical Topics in Fluid Mechanics, Vol. 1: Incompressible Models. Oxford University Press, New York, 1996.

\bibitem{Li1998}
P. L. Lions, Mathematical Topics in Fluid Mechanics, Vol. 2: Compressible Models. Oxford University Press, New York, 1998.

\bibitem{MN1980}
A. Matsumura, T. Nishida, The initial value problem for the equations of motion of
viscous and heat-conductive gases, J. Math. Kyoto Univ. 20 (1980) 67-104.

\bibitem{LZ2026}
S. Q. Liu, J. W. Zhang, Global regularity and incompressible limit of 2D compressible Navier-Stokes equations with large bulk viscosity. Sci. China Math. (2026). https://doi.org/10.1007/s11425-025-2565-0.

\bibitem{LXZ2026-1}
S. Q. Liu, X. Y. Xu, J. W. Zhang, On the 2D Cauchy Problem of Compressible Navier-Stokes Equations with Large Bulk Viscosity. Preprinted, 2026.

\bibitem{Lu1988}
G.  {\L}ukaszewicz, On nonstationary flows of asymmetric fluids. Rend. Accad. Naz. Sci. XL Mem. Mat.
 (5) 12 (1) (1988) 83-97.

\bibitem{Lu1989}
G. {\L}ukaszewicz, On the existence, uniqueness and asymptotic properties for solutions of flows of asymmetric fluids, Rend. Accad. Naz. Sci. XL Mem. Mat. (5) 13 (1) (1989) 105-120.

\bibitem{Lu1999}
G. {\L}ukaszewicz, Micropolar Fluids, Theory and Applications, Birkh\"{a}user, Boston, 1999.

\bibitem{Mu1998-1}
N. Mujakovi\'{c}, One-dimensional flow of a compressible viscous micropolar fluid: a local existence theorem. Glas. Mat. Ser. III 33(53) (1998), no. 1, 71-91.

\bibitem{Mu1998-2}
N. Mujakovi\'{c},
One-dimensional flow of a compressible viscous micropolar fluid: a global existence theorem. Glas. Mat. Ser. III 33(53) (1998), no. 2, 199-208.


\bibitem{Nirenberg}
L. Nirenberg, On elliptic partial differential equations, Ann. Scuola Norm. Sup. Pisa. 13 (3) (1959) 115-162.

\bibitem{QCZ2023}
C. Y. Qian, H. Chen, T. Zhang,  Global existence of weak solutions for 3D incompressible inhomogeneous asymmetric fluids, Math. Ann. 386 (2023) 1555-1593.

\bibitem{si1987}
J. Simon, Compactness sets in the space $L^p(0,T;B)$, Ann. Math. Pura Appl. 146 (1987) 65-96.

\bibitem{Song2023}
Z. Song, The global well-posedness for the 3-D compressible micropolar system in the critical besov space, Z. Angew. Math. Phys. 72 (2021) 160.

\bibitem{Te1984}
R. Temam, Navier-Stokes Equations. North-Holland, Amsterdam, 1984.

\bibitem{TPZ2021}
L. L. Tong, R. H. Pan, Z. Tan, Decay estimates of solutions to the compressible micropolar fluids system in $\mathbb{R}^3$.  J. Differ. Equ. 293 (2021) 520-552.


\end{thebibliography}
\end{document}